%
%
%
%
%
\RequirePackage{fix-cm}
\documentclass[smallextended]{svjour3}       
\smartqed  

\usepackage{fullpage}

\usepackage{amsfonts}
\usepackage{amsmath}
\usepackage{amssymb}

\usepackage[dvips]{graphicx}
\usepackage{color}
\usepackage{algorithmic}
\usepackage{algorithm}
\usepackage{multirow}

\usepackage{comment}
\usepackage{stackrel}
\usepackage{url}
\usepackage{subfloat}
\usepackage{subfig}
\newtheorem{assumption}[theorem]{Assumption}

\newcommand{\R}{\mathbf{R}}
\newcommand{\E}{\mathbf{E}}

\newcommand{\eqdef}{:=}

\newcommand{\Lmax}{\Lambda_{\max}}

\newcommand{\Lip}{L}

\newcommand{\Prob}{\mathbf{P}}
\newcommand{\prox}{{\rm{prox}}}

\DeclareMathOperator*{\argmin}{arg\,min}

\usepackage{mathtools}
\DeclarePairedDelimiter{\ceil}{\lceil}{\rceil}

\def\mplus{\mathrel{%
  \ooalign{\raise.29ex\hbox{$\scriptscriptstyle\mathbf{+}$}\cr}}}

 \def\mminus{\mathrel{%
  \ooalign{\raise.29ex\hbox{$\scriptscriptstyle\mathbf{-}$}\cr}}}

\title{A Flexible Coordinate Descent Method}
\author{Kimon Fountoulakis and Rachael Tappenden}

\author{
        Kimon~Fountoulakis\thanks{K. Fountoulakis is with the International Computer Science Institute, Department of Statistics, University of California Berkeley, 1947 Center St, Ste. 600, Berkeley, CA 94704, USA. e-mail: kfount@berkeley.edu.} \and
        Rachael~Tappenden\thanks{R. Tappenden (corresponding author) is with the School of Mathematics and Statistics, University of Canterbury, Private Bag 8400, Christchurch 8041, NZ. e-mail: rachael.tappenden@canterbury.ac.nz.}
}

\begin{document}

\maketitle
\begin{abstract}
We present a novel randomized block coordinate descent method for the minimization of a convex composite objective function. The method uses (approximate) partial second-order (curvature) information, so that the algorithm performance is more robust when applied to highly nonseparable or ill conditioned problems. We call the method Flexible Coordinate Descent (FCD). At each iteration of FCD, a block of coordinates is sampled randomly, a quadratic model is formed about that block and the model is minimized \emph{approximately/inexactly} to determine the search direction. An inexpensive line search is then employed to ensure a monotonic decrease in the objective function and acceptance of large step sizes. We present several high probability iteration complexity results to show that convergence of FCD is guaranteed theoretically. Finally, we present numerical results on large-scale problems to demonstrate the practical performance of the method.
\end{abstract}

\paragraph{Keywords.} large scale optimization; second-order methods; curvature information; block coordinate descent; nonsmooth problems; iteration complexity; randomized.
\paragraph{\textbf AMS Classification.} 49M15; 49M37; 65K05; 90C06; 90C25; 90C53.

\section{Introduction}
In this work we are interested in solving the following convex composite optimization problem
\begin{equation}
\label{Def_F}
  \min_{x \in \R^N}  F(x) \eqdef f(x) + \Psi(x),
\end{equation}
where $f(x)$ is a smooth convex function and $\Psi(x)$ is a (possibly) nonsmooth, (coordinate) separable, real valued convex function. Problems of the form \eqref{Def_F} arise in many important scientific fields, and applications include machine learning \cite{yuanho}, regression \cite{IEEEhowto:Tibshirani} and compressed sensing \cite{IEEEhowto:CandesRombergTao,Candes06,Donoho06}. Often the term $f(x)$ is a data fidelity term, and the term $\Psi(x)$ represents some kind of regularization.

Frequently, problems of the form \eqref{Def_F} are large-scale, i.e., the size of $N$ is of the order of a million or a billion. Large-scale problems impose restrictions on the types of methods that can be employed for the solution of \eqref{Def_F}. In particular, the methods should have low per iteration computational cost, otherwise completing even a single iteration of the method might require unreasonable time. The methods must also rely only on simple operations such as matrix vector products, and ideally, they should offer fast progress towards optimality.

First order methods, and in particular randomized coordinate descent methods, have found great success in this area because they take advantage of the underlying problem structure (separability and block structure), and satisfy the requirements of low computational cost and low storage requirements. For example, in \cite{Richtarik14} the authors show that their randomized coordinate descent method was able to solve sparse problems with millions of variables in a reasonable amount of time.

Unfortunately, randomized coordinate descent methods have two significant drawbacks. Firstly, due to their coordinate nature, coordinate descent methods are usually efficient only on problems with a high degree of partial separability,\footnote{See \cite[Equation~(2)]{Richtarik15} or \cite[Definition~13]{Tappenden15} for a precise definition of `partial separability'.} and performance suffers when there is a high dependency between variables. Secondly, as first-order methods, coordinate descent methods do not usually capture essential curvature information and have been shown to struggle on complicated sparse problems \cite{Fountoulakis15,l1regSCfg}.

The purpose of this work is to overcome these drawbacks by equipping a randomized block coordinate descent method with approximate partial second-order information. In particular, at every iteration of FCD a search direction is obtained by solving a local quadratic model \textit{approximately}. The quadratic model is defined by a matrix representing approximate second order information, so that essential curvature information is incorporated. The model need only be minimized \emph{approximately} to give an \emph{inexact} search direction, so that the process is efficient in practice. (The termination condition for the quadratic subproblem is inspired by \cite{sqa}, and we discuss this in more detail later.) A line search is then employed along this inexact direction, in order to guarantee a monotonic decrease in the objective function and large step-sizes.

FCD is computationally efficient. At each iteration of FCD, a block of coordinates is randomly selected, which is inexpensive. The method allows the use of inexact search directions, i.e., the quadratic model is minimized approximately.
Intuitively, it is expected that this will lead to a reduction in algorithm running time, compared with minimizing the model exactly at every iteration.
Also, the line search step depends on a subset of the coordinates only (corresponding to the randomly selected block of coordinates), so it is much cheaper compared with working in the full dimensional space. We note that the per iteration computational cost of the FCD method may be slightly higher than other randomized coordinate descent methods. This is due to the incorporation of the matrix representing partial second order information --- the matrix gives rise to a quadratic model that is (in general) nonseparable, so the associated subproblem may not have a closed form solution. However, in the numerical results presented later in this work, we show that the method is more robust (and efficient) in practice, and often FCD will require fewer iterations to reach optimality, compared with other state-of-the-art methods. i.e., we show that FCD is able to solve difficult problems, on which other coordinate descent methods may struggle.

The FCD method is supported by theoretical convergence guarantees in the form of high probability iteration complexity results. These iteration complexity results provide an explicit expression for the number of iterations $k$, which guarantees that, for any given error tolerance $\epsilon>0$, and confidence level $\rho \in (0,1)$, the probability that FCD achieves an $\epsilon$-optimal solution, exceeds $1-\rho$, i.e., $\mathbf{P}(F(x_k)-F^* \leq \epsilon)\geq 1-\rho$.

\subsection{Literature review}

Coordinate descent methods are some of the oldest and simplest iterative methods, and they are often better known in the literature under various names such as Jacobi methods, or Gauss-Seidel methods, among others. It has been observed that these methods may suffer from poor practical performance, particularly on ill-conditioned problems, and until recently, often higher order methods have been favoured by the optimization community. However, as we enter the era of big data, coordinate descent methods are coming back into favour, because of their ability to provide approximate solutions to real world instances of very large/huge scale problems in a reasonable amount of time.

Currently, state-of-the-art randomized coordinate descent methods include that of Richt\`{a}rik and Tak\`{a}\v{c} \cite{Richtarik14}, where the method can be applied to unconstrained convex composite optimization problems of the form \eqref{Def_F}. The algorithm is supported by theoretical convergence guarantees in the form of high probability iteration complexity results, and \cite{Richtarik14} also reports very impressive practical performance on highly separable large scale problems. Their work has also been extended to the parallel case \cite{Richtarik15}, to include acceleration techniques \cite{Fercoq13}, and to include the use of inexact updates \cite{Tappenden13}.

Other important works on coordinate descent methods include that of Tseng and Yun in \cite{tsengyun,Tseng09,Tseng10}, Nesterov \cite{Nesterov12} for huge-scale problems, work in \cite{Lu13} and \cite{Tappenden15} that improves the complexity analysis of \cite{Richtarik14} and \cite{Richtarik15} respectively, coordinate descent methods for group lasso problems \cite{Qin13,Simon12} or for problems with general regularizers \cite{ShalevSchwartz13,Wright12}, a coordinate descent method that uses a sequential active set strategy \cite{desantis14}, and coordinate descent for constrained optimization problems \cite{Necoara14}.

Unfortunately, on ill-conditioned problems, or problems that are highly nonseparable, first order methods can display very poor practical performance, and this has prompted the study of methods that employ second order information. To this end, recently there has been a flurry of research on Newton-type methods for problems of the general form \eqref{Def_F}, or in the special case where $\Psi(x)=\|x\|_1$. For example, Karimi and Vavasis \cite{Karimi14} have developed a proximal quasi-Newton method for $l_1$-regularized least squares problems, Lee, Sun and Saunders \cite{Lee12,Lee13} have proposed a family of Newton-type methods for solving problems of the form \eqref{Def_F} and Scheinberg and Tang \cite{Scheinberg13} present iteration complexity results for a proximal Newton-type method. Moreover, the authors in \cite{sqa} extended standard inexact Newton-type methods to the case of minimization of a composite objective involving a smooth convex term plus an $l_1$-regularizer term.

Facchinei and coauthors in \cite{facchinei14a,facchinei14b,facchinei14} have also made a significant contribution to the literature on randomized coordinate descent methods. Two new algorithms are introduced in those works, namely FLEXA \cite{facchinei14b,facchinei14} and HyFLEXA \cite{facchinei14a}. Both methods can be applied to problems of the form \eqref{Def_F} where $f$ is allowed to be nonconvex, and for HyFLEXA, $\Psi$ is allowed to be nonseparable. We stress that nonconvexity of $f$, and nonseparability of $\Psi$ are \emph{outside the scope of this work}. Nevertheless, these methods are pertinent to the current work, so we discuss them now.

For FLEXA and HyFLEXA, (as is standard in the current literature), the block structure of the problem is fixed throughout the algorithm: the data vector $x \in \R^N$ is partitioned into $n(\leq N)$ blocks, $x = (x^1,\dots,x^n)\in \R^N$. At each iteration, one block, all blocks, or some number in between, are selected in such a way that some specified error bound is satisfied. Next, a local \emph{convex} model is formed and that model is approximately minimized to find a search direction. Finally, a convex combination of the current point, and the search direction is taken, to give a new point, and the process is repeated. The model they propose is \emph{block separable}, so the subproblems for each block are independent. (See (3) in \cite{facchinei14} for the local/block quadratic model, or F1--F3 in \cite{facchinei14a} for a description of the strongly convex local/block model). This has the advantage that the subproblem for each block can be solved/minimized in parallel, but has the disadvantage that no interaction between different blocks is captured by either algorithm. We also note that, while these algorithms are equipped with global convergence guarantees, iteration complexity results have not been established for either method, which is a drawback.

We make the following two comments about FLEXA and HyFLEXA. The first is that, although, as presented in \cite{facchinei14} and \cite{facchinei14a}, coordinate selection must respect the block structure of the problem, if the regularizer is assumed to be coordinate separable (which is the assumption we make in this work) then the convergence analysis in \cite{facchinei14} and \cite{facchinei14a} can be adapted to hold in the `non-fixed block' setting. The second comment is that, while the works in \cite{facchinei14} and \cite{facchinei14a} only present global convergence guarantees, a variation of the method, presented in \cite{Razaviyayn14}, is equipped with iteration complexity results. The algorithm in this paper incorporates partial (possibly inexact) second order information, allows inexact updates with a verifiable stopping condition, and is supported by iteration complexity guarantees.

Another important method that has recently been proposed is a randomized coordinate decent method by Qu et al. \cite{Qu15}. One of the most notable features of their algorithm is that the block structure is \emph{not fixed throughout the algorithm}. That is, at each iteration of their algorithm, a random subset of coordinates $S_k$ is selected from the set of coordinates $\{1,\dots,N\}$. Then, a quadratic model is minimized exactly to obtain an update, and a new point is generated by applying the update to the current point. Unfortunately, this algorithm is only applicable to strongly convex smooth functions, or strongly convex empirical risk minimization problems, but is \emph{not suitable for general convex problems of the form \eqref{Def_F}}. Moreover, the matrices used to define their quadratic model must obey several (strong) assumptions, and the quadratic model must be minimized \emph{exactly}. These restrictions may make the algorithm inconvenient from a practical perspective.

The purpose of this work is to build upon the positive features of some existing coordinate descent methods and create a general flexible framework that can be used to solve any problem of the form \eqref{Def_F}. We will adopt some of the ideas in \cite{sqa,facchinei14,Qu15} (including variable block structure; the incorporation of partial approximate second order information; the practicality of approximate solves and inexact updates; cheap line search) and create a new flexible randomized coordinate descent framework that is not only computationally practical, but is also supported by strong theoretical guarantees.

\subsection{Contributions}
In this section we list several of the contributions of this paper, which correspond to the central ideas of our FCD algorithm.
\begin{enumerate}
\item \textbf{Blocks can vary throughout iterations.}
Existing randomized coordinate descent methods initially partition the data $x\in \R^N$ into $n(\leq N)$ blocks ($x = (x^1,\dots,x^n)\in \R^N$) and at each iteration, one/all/several of the blocks are selected according to some probability distribution, and those blocks are then updated. For those methods, the block partition is \emph{fixed throughout the entire algorithm.} So, for example, coordinates in block $x^1$ will always be updated all together as a group, independently of any other block $x^i$ $i \neq 1$. One of the main contributions of this work is that we allow the blocks to \emph{vary throughout the algorithm}. i.e., the method is not restricted to a fixed block structure. No partition of $x$ need be initialized. Rather, when FCD is initialized, a parameter $1\leq \tau \leq N$ is chosen and then, at each iteration of FCD, a subset $S \subseteq \{1,\dots, N\}$ of size $|S| = \tau $ is randomly selected/generated, and the coordinates of $x$ corresponding to the indices in $S$ are updated.
%
To the best of our knowledge, the only other paper that allows for this random mixing of coordinates/varying blocks strategy, is the work by Qu et al. \cite{Qu15}.\footnote{An earlier version of this work, which, to the best of our knowledge, was the first to propose varying random block selection, is cited by \cite{Qu15}.} This variable block structure is crucial with regards to our next major contribution.
\begin{remark}
  We note that the algorithms of Tseng and Yun \cite{tsengyun,Tseng09,Tseng10} also allow a certain amount of coordinate mixing. However, their algorithms enforce deterministic rules for coordinate subset selection (either a `generalized Gauss-Seidel', or Gauss-Southwell rule), which is different from our randomized selection rule. This difference is important because their deterministic strategy can be expensive/inconvenient to implement in practice, whereas our random scheme is simple and cheap to implement.
\end{remark}

 \item \textbf{Model: Incorporation of partial second order information.} FCD uses a quadratic model to determine the search direction. The model is defined by any
     symmetric positive definite matrix $H^S(x_k)$, which depends on both the subset of coordinates $S$, and also on the current point $x_k$. i.e., $H^S(x_k)$ is not fixed; rather, it can change/vary throughout the algorithm. We stress that this is a key advantage over most existing methods, which enforce the symmetric positive definite matrix defining their quadratic model to be fixed with respect to the fixed block structure, and/or iteration counter. To the best of our knowledge, the only works that allow the matrix $H^S(x_k)$ to vary with respect to the subset $S$ is this one (FCD), and the work by Qu et al. \cite{Qu15}. A crucial observation is that, if $H^S(x_k)$ approximates the Hessian, our approach allows \emph{every element of the Hessian to be accessed}. On the other hand, other existing methods can only access blocks along the diagonal of (some perturbation of) the Hessian, (including \cite{facchinei14a,facchinei14b,facchinei14,Richtarik15,Richtarik14,Tappenden15}.)

     Furthermore, the only restriction we make on the choice of the matrix $H^S(x_k)$ is that it be symmetric and positive definite.
     This is a much weaker assumption than made by Qu et al. \cite{Qu15}, who insist that the matrix defining their quadratic model be a principle submatrix of some fixed/predefined $N \times N$ matrix $M$ say, and the large matrix $M$ must be symmetric and \emph{sufficiently positive definite}. (See Section 2.1 in \cite{Qu15}.)

\item \textbf{Inexact search directions.} To ensure that FCD is computationally practical, it is imperative that the iterates are inexpensive, and this is achieved through the use of \emph{inexact} updates. That is, the model can be \emph{approximately} minimized, giving \emph{inexact search directions}. This strategy makes intuitive sense because, if the current point is far from the true solution, it may be wasteful/unnecessary to exactly minimize the model. Any algorithm can be used to approximately minimize the model, and if $H^S(x_k)$ approximates the Hessian, then the search direction obtained by minimizing the model is an approximate Newton-type direction. Importantly, the stopping conditions for the inner solve are \emph{easy to verify}; they depend upon quantities that are easy/inexpensive to obtain, or may be available as a byproduct of the inner search direction solver. {\color{black} Block coordinate descent methods that use inexact search directions are uncommon in the literature because it is often notoriously difficult to check the conditions that determine whether an inexact search direction is `acceptable'. A positive feature of our algorithm is that it is computationally practical to identify and verify the inexact directions used in FCD.}
    \begin{remark}
      The precise form of our inexactness termination criterion is important because, not only is it computationally practical (i.e., easy to verify), it also allowed us to derive iteration complexity results for FCD (see point 5). We considered several other termination criterion formulations, but we were unable to obtain corresponding iteration complexity results (although global convergence results were possible). Currently, the majority of randomized CD methods require exact solves for their inner subproblems and we believe that a major reason for this is because iteration complexity results are significantly easier to obtain in the exact case. Coming up with practical inexact termination criteria for randomized CD methods that also allow the derivation of iteration complexity results seems to be a major hurdle in this area, although some progress is being made, e.g., \cite{Cassioli13,Devolder13,Devolder11,Tappenden13}.
    \end{remark}

  \item \textbf{Line search.} FCD includes a line search to ensure a monotonic decrease of the objective function as iterates progress. (The line search is needed because we only make weak assumptions on the matrix $H^S(x_k)$.)
  The line search is inexpensive to perform because, at each iteration, \emph{it depends on a subset of coordinates $S$ only}.
  One of the major advantages of incorporating second-order information combined with a line search is to allow, in practice, the selection of \emph{large step sizes} (close to one).
  This is because unit step sizes can substantially improve the practical efficiency of a method. In fact, for all experiments that we performed,
  unit step sizes were accepted by the line search for the majority of the iterations. {\color{black} A commonly held view in this field is that function evaluations costs are unacceptably high, and so a line-search should not be used. Instead, block coordinate descent methods almost always use a fixed step size. However, FCD does indeed use a line search, and the numerical experiments presented in Section~\ref{S_Numerical} show that our algorithm is competitive with, and often outperforms, other state-of-the-art block coordinate descent methods that use a fixed step size. Thus, we would argue that the previously mentioned reservations regarding the use of a line search are not necessarily well founded.}
  \item \textbf{Convergence theory.} We provide a complete convergence theory for FCD. In particular, we provide high probability iteration complexity guarantees for the algorithm in both the convex and strongly convex cases, and in the cases of both inexact or exact search directions. Our results show that FCD converges at a sublinear rate for convex functions, and at a linear rate for strongly convex functions. {\color{black} Complexity theory for block coordinate descent methods that use inexact updates is also uncommon, which is another strength of FCD.}
\end{enumerate}

\subsection{Paper Outline}

The paper is organized as follows. In Section \ref{Section_Preliminaries} we introduce the notation and definitions that are used throughout this paper, including the definition of the quadratic model, and stationarity conditions. A thorough description of the FCD algorithm is presented in Section \ref{S_Algorithm}, including how the blocks are selected/sampled at each iteration (Section~\ref{S_blockstructure}), a description of the search direction, several suggestions for selecting the matrices $H^S(x_k)$, and analysis of the subproblem/model termination conditions (Section~\ref{SS_searchdirectionHessian}),   a definition of the line search (Section~\ref{subsec:pract_line}) and we also present several concrete problem examples (Section~\ref{SS_concrete_examples}). In Section~\ref{S_GlobalConvergence} we show that the line search in FCD will always be satisfied for some positive step $\alpha$, and that the objective function value is guaranteed to decrease at every iteration. Section~\ref{S_technicalresults} introduces several technical results, and in Section~\ref{S_Complexity} we present our main iteration complexity results. In Section~\ref{S_ComplexitySmooth} we give a simplified analysis of FCD in the smooth case. Finally, several numerical experiments are presented in Section \ref{S_Numerical}, which show that the algorithm performs very well in practice, even on ill-conditioned problems.

\section{Preliminaries}
\label{Section_Preliminaries}
Here we introduce the notation and definitions that are used in this paper, and we also present some important technical results. Throughout the paper we denote the standard Euclidean norm by $\|\cdot\| \equiv \sqrt{\langle \cdot, \cdot \rangle}$
and the ellipsoidal norm by $\|\cdot\|_A \equiv \sqrt{\langle \cdot, A\cdot \rangle}$, where $A$ is a symmetric positive definite matrix. Furthermore, $\R^{N}_{++}$ denotes a vector in $\R^N$ with (strictly) positive components.

\paragraph{Subgradient and subdifferential.}
For a function $\Phi: \R^N \to \R$ the elements $s\in\R^N$ that satisfy
$$
     \Phi(y) \geq \Phi(x) + \langle s,y-x \rangle, \quad \forall y\in\R^N,
$$
are called the subgradients of $\Phi$ at point $x$. In words, all elements defining a linear function that supports $\Phi$ at a point $x$ are subgradients. The set of all $s$ at a point $x$ is called the subdifferential of $\Phi$ and it is denoted by $\partial \Phi(x)$.

\paragraph{Convexity.}
\label{S_StrongConvex}
A function $\Phi: \R^N \to \R$ is strongly convex with convexity parameter $\mu_{\Phi} > 0$ if
\begin{equation}
\label{strongly_convex_1}
     \Phi(y) \geq \Phi(x) + \langle s,y-x \rangle + \tfrac{\mu_{\Phi}}{2}\|y-x\|^2, \quad \forall x,y \in \R^N,
\end{equation}
where $s\in\partial \Phi(x)$.
If $\mu_\Phi = 0$. then function $\Phi$ is said to be convex.

\paragraph{Convex conjugate and proximal mapping.}
The proximal mapping of a convex function $\Psi$ at the point $x \in \R^N$ is defined as follows:
\begin{equation}
\label{Def_prox}
  \prox_\Psi (x) \eqdef \arg \min_{y\in \R^N} \Psi(y) + \tfrac12 \|y-x\|^2.
\end{equation}
Furthermore, for a convex function $\Phi: \R^N \to \R$, it's convex conjugate $\Phi^*$ is defined as
$ \Phi^*(y) \eqdef \sup_{u\in\mathbb{R}^N} \langle u,y \rangle  - \Phi(u).$
   From Chapter $1$ of \cite{Rockafellar06}, we see that $\prox_{\Psi}(\cdot)$, and it's convex conjugate $\prox_{\Psi^*}(\cdot)$, are nonexpansive:
  \begin{equation}
  \label{Eq_proxnonexpansive}
  \|\prox_{\Psi}(y) - \prox_{\Psi}(x)\| \le \|y -x\|, \quad  \mbox{and} \quad \|\prox_{\Psi^*}(y) - \prox_{\Psi^*}(x)\| \le \|y -x\|.
  \end{equation}

\paragraph{Coordinate decomposition of $\R^N$.}
\label{S_Block_structure}
Let $U \in \mathbb{R}^{N \times  N}$ be a column permutation of the $N \times N$ identity matrix and further let $U = [U_1,U_2,\dots,U_N]$ be a decomposition of $U$ into $N$ submatrices (column vectors), where $U_i \in \R^{N \times 1}$. It is clear that any vector $x \in \mathbb{R}^N$ can be written uniquely as
$x = \sum_{i=1}^N U_ix^i,$ where $x^i \in \R$ denotes the $i$th coordinate of $x$.

Define $[N]\eqdef \{1,\dots,N\}$. Then we let $S\subseteq [N]$ and $U_S \in \mathbb{R}^{N\times |S|}$ be the collection of columns from matrix $U$ that have column indices in the set $S$.
We denote the vector
\begin{equation}\label{U_S}x^S \eqdef \sum_{i\in S} U_i^Tx = U_S^T x.\end{equation}

\paragraph{Coordinate decomposition of $\Psi$.}\label{S_Psi}
The function $\Psi:\R^N \to \R$ is assumed to be coordinate separable. That is, we assume that $\Psi(x)$ can be decomposed as:
\begin{equation}\label{S2_separable_psi}
\Psi(x) = \sum_{i=1}^N \Psi_i (x^i),
\end{equation}
where the functions $\Psi_i: \R \to \R$ are convex. We denote by $\Psi_S: \R^{|S|} \to \R$, where $S\subseteq [N]$, the function
\begin{equation}\label{PsiS}
  \Psi_S(x^S) = \sum_{i\in S} \Psi_i (x^i),
\end{equation}
where $x^S \in \R^{|S|}$ is the collection of components from $x$ that have indices in set $S$.
The following relationship will be used repeatedly in this work. For $x \in \R^N$, index set $S \subseteq [N]$, and $t, \hat t\in \R^{|S|}$, it holds that:
\begin{eqnarray}\label{Eq_PsivsPsii}
  \Psi(x+U_St^S) - \Psi(x + U_S \hat t^S) \overset{\eqref{S2_separable_psi}+\eqref{PsiS}}{=}  \Psi_S(x^S+t^S)-\Psi_S(x^S + \hat t^S).
\end{eqnarray}
Clearly, when $\hat t^S\equiv 0$, we have the special case $\Psi(x+U_St^S) - \Psi(x) =  \Psi_S(x^S+t^S)-\Psi_S(x^S)$.

\paragraph{Subset Lipschitz continuity of $f$.}

Throughout the paper we assume that the gradient of $f$ is Lipschitz, uniformly in $x$, for all subsets $S\subseteq [N]$. This means that, for all $x \in \R^N$, $S\subseteq [N]$ and $t^S \in \R^{|S|}$ we have
\begin{equation}
\label{S2_Lipschitz}
     \| \nabla_S f(x + U_St^S) - \nabla_S f(x) \| \leq \Lip_S \|t^S\|,
\end{equation}
where $ \nabla_S f(x)  \overset{\eqref{U_S}}{=} U_S^T\nabla f(x)$. An important consequence of \eqref{S2_Lipschitz} is the following standard inequality \cite[p.57]{Nesterov04}:
\begin{equation}
\label{S2_upperbound}
     f(x+ U_St^S) \leq f(x) + \langle \nabla_S f(x), t^S \rangle+ \tfrac{\Lip_S}{2}\|t^S\|^2.
\end{equation}

\paragraph{Radius of the Levelset.}
Let $X_*$ denote the set of optimal solutions of \eqref{Def_F}, and let $x_*$ be any member of that set. We define
\begin{equation}\label{Def_Rlevelset}
  \mathcal{R}(x) = \max_y \max_{x_* \in X_*} \{\|y-x_*\| \; : \: F(y) \leq F(x)\},
\end{equation}
which is a measure of the size of the level set of $F$ given by $x$. In this work we make the standard assumption that $\mathcal{R}(x_0)$ is bounded at the initial iterate $x_0$.
We also define a scaled version of \eqref{Def_Rlevelset}
\begin{equation}\label{Def_Rlevelset_2}
  \mathcal{R}_w(x) = \max_y \max_{x_* \in X_*} \{\|y-x_*\|_w \; : \: F(y) \leq F(x)\},
\end{equation}
where $w\in\R_{++}^N$ and $\|u\|_w = \left(\sum_{i=1}^N w_iu_i^2\right)^{1/2}$.

\subsection{Piecewise Quadratic Model}

For fixed $x \in\R^N$, we define a piecewise quadratic approximation of $F$ around the point $(x + t)\,\in \R^N$ as follows:
$F(x + t)\approx f(x) + Q(x;t),$
where
\begin{equation}
\label{Def_Q}
Q(x;t) \eqdef  \langle \nabla f(x), t \rangle + \tfrac12 \|t\|_{H(x)}^2 + \Psi(x + t)
\end{equation}
and $H(x)$ is \emph{any} symmetric positive definite matrix.
We also define
\begin{equation}\label{Def_Qi}
  Q_S(x,t^S) \eqdef \langle \nabla_S f(x), t^S \rangle + \tfrac12 \|t^S\|_{H^S(x)}^2 + \Psi_S(x^S + t^S),
\end{equation}
and $H^S(x) \in \R^{|S| \times |S|}$ is a square submatrix (principal minor) of $H(x)$ that corresponds to the selection of columns
and rows from $H(x)$ with column and row indices in set $S$. Notice that $Q_S(x,t^S)$ is the quadratic model for the collection of coordinates in $S$.

Similarly to \eqref{Eq_PsivsPsii}, we have the following important relationship. For $x \in \R^N$, index set $S \subseteq [N]$, and $t, \hat t\in \R^{|S|}$, it holds that:
\begin{eqnarray}\label{Eq_QvsQS}
\notag
  Q(x;U_St^S) - Q(x;U_S \hat t^S) &\overset{\eqref{Def_Q}}{=}& \langle \nabla f(x), U_St^S \rangle + \tfrac12 \|U_St^S\|_{H(x)}^2 + \Psi(x + U_St^S)\\
  \notag
  &&- \langle \nabla f(x), U_S\hat t^S \rangle - \tfrac12 \|U_S\hat t^S\|_{H(x)}^2 - \Psi(x + U_S \hat t^S)\\
  \notag
  &\overset{\eqref{Eq_PsivsPsii}}{=}& \langle \nabla_S f(x), t^S\rangle -\langle \nabla_S f(x),\hat t^S \rangle + \tfrac12 \|t^S\|_{H^S(x)}^2 - \tfrac12 \|\hat t^S\|_{H^S(x)}^2\\
  \notag
  &&+ \Psi_S(x^S + t^S)  - \Psi_S(x^S + \hat t^S)\\
  &\overset{\eqref{Def_Qi}}{=}& Q_S(x,t^S) - Q_S(x,\hat t^S)
\end{eqnarray}

\subsection{Stationarity conditions.}
The following theorem gives the equivalence of some stationarity conditions of problem \eqref{Def_F}.
\begin{theorem}
\label{thm:optimality_conditions}
The following are equivalent first order optimality conditions for problem \eqref{Def_F}, Section $2$ in \cite{PB13}.
\begin{enumerate}
\item[(i)] $\nabla f(x) + s=0$ and $s\in \partial \Psi(x)$,
\item[(ii)] $-\nabla f(x)\in \partial \Psi(x)$,
\item[(iii)] $\nabla f(x) + \prox_{\Psi^*}\left(x -  \nabla f(x) \right) = 0$,
\item[(iv)] $x = \prox_{\Psi}\left(x - \nabla f(x) \right)$.
\end{enumerate}
\end{theorem}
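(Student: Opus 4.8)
The plan is to establish the chain of equivalences (i) $\Leftrightarrow$ (ii) $\Leftrightarrow$ (iv) $\Leftrightarrow$ (iii), since each neighbouring pair is connected by a single standard fact. As a conceptual anchor, condition (ii) is just Fermat's rule $0 \in \partial F(x) = \nabla f(x) + \partial \Psi(x)$ (the sum rule applies because $f$ is smooth and finite everywhere), which is why all four deserve to be called optimality conditions.

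First, (i) $\Leftrightarrow$ (ii) is immediate. Condition (i) asserts the existence of a vector $s$ with $s = -\nabla f(x)$ and $s \in \partial \Psi(x)$; eliminating $s$ gives exactly $-\nabla f(x) \in \partial \Psi(x)$, which is (ii). Conversely, given (ii), setting $s := -\nabla f(x)$ recovers (i). So this step is pure substitution and renaming of the subgradient $s$.

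Next, (ii) $\Leftrightarrow$ (iv) rests on the subdifferential characterisation of the proximal map. Writing the first-order optimality condition for the strongly convex minimisation defining $\prox_\Psi(v)$ in \eqref{Def_prox}, namely $0 \in \partial\Psi(u) + (u - v)$, yields the equivalence $u = \prox_\Psi(v) \Leftrightarrow v - u \in \partial\Psi(u)$. Applying this with $v = x - \nabla f(x)$ and $u = x$ turns (iv) into $(x - \nabla f(x)) - x = -\nabla f(x) \in \partial\Psi(x)$, which is precisely (ii). Finally, (iv) $\Leftrightarrow$ (iii) is bridged by the Moreau decomposition $v = \prox_\Psi(v) + \prox_{\Psi^*}(v)$, which links $\Psi$ to its conjugate $\Psi^*$. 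Evaluating this identity at $v = x - \nabla f(x)$ and substituting the value $\prox_\Psi(x - \nabla f(x)) = x$ supplied by (iv) gives $\prox_{\Psi^*}(x - \nabla f(x)) = -\nabla f(x)$, i.e. (iii); reversing the substitution recovers (iv) from (iii).

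The computations are all routine once the two ingredients are in place. I expect the only genuine obstacle to be the careful justification of those two standard tools -- the prox-subdifferential characterisation and the Moreau decomposition -- which are exactly the facts underlying the cited reference; everything else reduces to eliminating the auxiliary subgradient $s$ and substituting known values of the proximal maps. In a fully self-contained write-up one would either quote these two identities from a convex-analysis reference or derive them from the definitions of $\prox$ and the convex conjugate $\Psi^*$ given in the preliminaries.
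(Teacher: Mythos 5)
Your proof is correct, and it is essentially the argument the paper relies on: the paper offers no proof of its own, deferring entirely to Section~2 of \cite{PB13}, where the two ingredients you isolate---the characterisation $u = \prox_\Psi(v) \Leftrightarrow v - u \in \partial\Psi(u)$ and the Moreau decomposition $v = \prox_\Psi(v) + \prox_{\Psi^*}(v)$---are precisely the standard facts used. Your chain (i)~$\Leftrightarrow$~(ii)~$\Leftrightarrow$~(iv)~$\Leftrightarrow$~(iii) is sound as written (note only that the Moreau decomposition requires $\Psi$ closed proper convex, which holds here since $\Psi:\R^N\to\R$ is real-valued convex and hence continuous), so there is nothing to fix.
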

Let us define the continuous function
\begin{equation}\label{defeq:1}
g(x;t) \eqdef \nabla f(x) + H(x) t + \prox_{\Psi^*}\big(x + t -  \nabla f(x) - H(x) t \big).
\end{equation}
By Theorem \ref{thm:optimality_conditions}, the points that satisfy $g(x;0) = \nabla f(x) + \prox_{\Psi^*}\left(x -  \nabla f(x)\right)=0$ are stationary points for problem \eqref{Def_F}. Hence, $g(x;0)$ is a continuous measure of the distance from the set of stationary points of problem \eqref{Def_F}.
Furthermore, we define
\begin{equation}
\label{defeq:2}
g_S(x;t^S) \eqdef \nabla_S f(x) + H^S (x)t^S + \prox_{\Psi^*_S}\big(x^S + t^S - \nabla_S f(x) - H^S(x)t^S \big).
\end{equation}

\section{The Algorithm}
\label{S_Algorithm}
In this section we  present the Flexible Coordinate Descent (FCD) algorithm for solving problems of the form \eqref{Def_F}. There are three key steps in the algorithm: (Step $4$) the coordinates are sampled randomly; (Step $5$) the model \eqref{Def_Qi} is solved approximately until rigorously defined stopping conditions are satisfied; (Step $6$) a line search is performed to find a step size that ensures a sufficient reduction in the objective value. Once these key steps have been performed, the current point $x_k$ is updated to give a new point $x_{k+1}$, and the process is repeated.

In the algorithm description, and later in the paper, we will use the following definition for the set $\Omega$:
\begin{equation}\label{Omega}
  \Omega \eqdef \partial Q_S(x_k;t_k^S).
\end{equation}
We are now ready to present pseudocode for the FCD algorithm, while a thorough description of each of the key steps in the algorithm will follow in the rest of this section.
\begin{algorithm}[H]
\begin{algorithmic}[1]
\vspace{0.1cm}
\STATE \textbf{Input} Choose $x_0\in \R^N$, $\theta \in (0,1)$ and $1\leq \tau \leq N$.
\FOR{$k=1,2,\dots$} \vspace{2mm}
\STATE Sample a subset of coordinates $S\subseteq [N]$, of size $|S| = \tau$, with probability $\mathbf{P}(S) >0$\\ \vspace{2mm}
\STATE If $g_S(x_k;0)=0$ go to Step $3$; else select $\eta_k^S \in [0,1)$ and approximately solve
\begin{equation}\label{eq_subproblem}
t_k^S \approx \argmin_{t^S} Q_S(x_k;t^S),
\end{equation}
until the following stopping conditions hold:
\begin{equation}\label{Def_stoppingconditions_1}
Q(x_k;U_St^S_k) < Q(x_k;0),
\end{equation}
and
\begin{equation}\label{Def_stoppingconditions_2}
\inf_{u\in \Omega}\|u - g_S(x_k;t^S_k)\|^2 \le (\eta_k^S \|g_S(x_k;0)\|)^2 -\|g_S(x_k;t^S_k)\|^2.
\end{equation}
\STATE Perform a backtracking line search along the direction $t_k^S$ starting from $\alpha=1$. i.e., find $\alpha\in(0,1]$ such that
\begin{equation}\label{Def_linesearch}
F(x_k) - F(x_k+\alpha U_St_k^S) \ge \theta \left(\ell(x_k;0) - \ell(x_k;\alpha U_St_k^S)\right),
\end{equation}
where
\begin{equation}\label{Def_lossfunctionli}
\ell(x_k; t) \eqdef f(x_k) + \langle \nabla f(x_k), t \rangle + \Psi(x_k + t).
\end{equation}
\STATE Update $x_{k+1} = x_k + \alpha U_S t_k^S$
\ENDFOR
\end{algorithmic}
\caption{Flexible Coordinate Descent (FCD)}
\label{FCD}
\end{algorithm}

\subsection{Selection of coordinates (Step $\boldsymbol 3$)} \label{S_blockstructure}

One of the central ideas of this algorithm is that the selection of coordinates to be updated at each iteration is chosen \emph{randomly}. This allows the coordinates to be selected very quickly and cheaply. For all the results in this work, we use a `$\tau$-nice sampling' \cite{Richtarik15}. This sampling is described briefly below, and a full description can be found in \cite{Richtarik15}.

Let $2^{[N]}$ denote the set of all subsets of $[N]$, which includes the empty set $\emptyset$ and $[N]$ itself.
At every iteration of FCD, a subset is sampled from $2^{[N]}$. The probability of a set $S\in2^{[N]}$ being selected is denoted by $\Prob(S)$, and we are interested in probability distributions with $\Prob(S)>0$ $\forall S\in 2^{[N]}\backslash \emptyset$ and $\Prob(\emptyset) = 0$.
i.e., all non-empty sets have positive probability of being selected.

In what follows we describe a (specific instance of a) uniform probability distribution (called a $\tau$-nice sampling). In particular, the uniform probability distribution is constructed in such a way that subsets in $2^{[N]}$ with the same cardinality have equal probability of being selected, i.e., $\Prob(S) = \Prob(S^{'})>0$ $\forall S,S^{'}\in 2^{[N]}$ with $|S|=|S^{'}|$.
\begin{definition}[$\tau$-nice sampling]\label{Def_Sampling}
  Given an integer $1\leq \tau \le N$, a set $S$ with $|S|=\tau$ is selected uniformly
  with probability one, and the sampling is uniquely characterised by its density function:
\begin{equation}
\label{prob_dist_uniform}
\Prob(S) = \frac{1}{\binom{N}{\tau}} \ \forall S\in2^{[N]} \mbox{ with } |S|=\tau,
\quad
 \Prob(S) = 0 \ \forall S\in2^{[N]} \mbox{ with } |S|\neq\tau.
 \end{equation}
\end{definition}
\begin{assumption}\label{Assume_Sampling}
  In Step 3 of FCD (Algorithm~\ref{FCD}), we assume that the sampling procedure used to generate the subset of coordinates $S$ in that given in Definition~\ref{Def_Sampling}.
\end{assumption}
Below is a technical result (see \cite{Richtarik15}) that uses the probability distribution \eqref{prob_dist_uniform} and is used in the worst-case iteration complexity results of this paper. Let $\theta_i$ with $i=1,2,\dots,N$ be some constants. Then
\begin{equation}
 \label{eq:tech_prob}
 \E\Big[\sum_{i\in S} \theta_i\Big] = \frac{\tau}{N} \sum_{i=1}^N \theta_i.
 \end{equation}

\subsection{The search direction and Hessian approximation (Step $\boldsymbol 4$)}\label{SS_searchdirectionHessian}

The search direction (Step~4 of FCD) is determined as follows. Given a set of coordinates $S$, FCD forms a model for $S$ \eqref{Def_Qi}, and minimizes the model approximately until the stopping conditions \eqref{Def_stoppingconditions_1} and \eqref{Def_stoppingconditions_2} are satisfied, giving an `inexact' search direction. We also describe the importance of the choice of the matrix $H$, which defines the model and is an approximate second order information term. Henceforth, we will use the shorthand $H_k^S \equiv H^S(x_k)$.

\subsubsection{The search direction}

The subproblem \eqref{eq_subproblem}, (where $Q_S(x_k;t^S)$ is defined in \eqref{Def_Qi}) is approximately solved, and the search direction $t_k^S$ is accepted when the stopping conditions \eqref{Def_stoppingconditions_1} and \eqref{Def_stoppingconditions_2} are satisfied, for some $\eta_k^S \in [0,1)$. Notice that
\begin{align}\label{eq:1}
\notag
  Q(x;U_S t^S) - Q(x;0) &\overset{\eqref{Eq_QvsQS}}{=} Q_S(x;t^S) - Q_S(x;0)\\
  &\overset{\eqref{Def_Qi}}{=} \langle \nabla_S f(x), t^S \rangle + \tfrac12 \|t^S\|_{H^S}^2 + \Psi_S(x^S + t^S) - \Psi_S(x^S).
\end{align}
Hence, from \eqref{eq:1}, the stopping conditions \eqref{Def_stoppingconditions_1} and \eqref{Def_stoppingconditions_2} depend on subset $S$ only, and are therefore inexpensive to verify, meaning that they are \emph{implementable}.
\begin{remark}\label{Remark_tineq0}\quad
\begin{itemize}
  \item[(i)] At some iteration $k$, it is possible that $g_S(x_k;0)=0$. In this case, it is easy to verify that the optimal solution of subproblem \eqref{eq_subproblem} is $t^S_k=0$. Therefore, before calculating $t^S_k$ we check first if condition $g_S(x_k;0)=0$ is satisfied.
  \item[(ii)] Notice that, unless at optimality (i.e., $g(x_k;0)=0$), there will always exist a subset $S$ such that $g_S(x_k;0)\neq 0$, which implies that $t^S_k \neq 0$. Hence, FCD will not stagnate.
\end{itemize}
\end{remark}

\subsubsection{The Hessian approximation}
\label{S_HessianApprox}
One of the most important features of this method is that the quadratic model \eqref{Def_Qi} incorporates second order information in the form of a symmetric positive definite matrix $H_k^S$. This is key because, depending upon the choice of $H_k^S$, it makes the method robust. Moreover, at each iteration, the user has complete freedom over the choice of $H_k^S\succ0$.

We now provide a few suggestions for the choice of $H_k^S$. (This list is not intended to be exhaustive.) Notice that in each case there is a trade off between a matrix that is inexpensive to work with, and one that is a more accurate representation of the true block Hessian.
\begin{enumerate}
\item \textbf{Identity.} Here, $H_k^S = I$, so that \emph{no second order information is employed by the method.}
\item \textbf{Multiple of the identity.} One could set $H_k^S = \nu I$, for some $\nu>0$. In particular, a popular choice is $\nu = L_S$, where $L_S$ is the Lipschitz constant defined in \eqref{S2_Lipschitz}.
  \item \textbf{Diagonal matrix.} When $H_k^S$ is a diagonal matrix, $H_k^S$ and it's inverse are inexpensive to work with. In particular, $H_k^S = \text{diag}(\nabla_S^2 f(x_k))$ captures partial curvature information, and is an effective choice in practice, particularly when $\text{diag}(\nabla^2 f(x))$ is a good approximation to $\nabla^2 f(x)$. Moreover, if $f$ is quadratic, then $\nabla^2 f(x_k)$ is constant for all $k$, so $\text{diag}(\nabla^2 f(x))$ can be computed and stored at the start of the algorithm and elements can be accessed throughout the algorithm as necessary.
  \item \textbf{Principal minor of the Hessian.} When $H_k^S = \nabla_S^2 f(x_k)$, second order information is incorporated into FCD, which can be very beneficial for ill-conditioned problems. However, this choice of $H_k^S$ is usually more computationally expensive to work with.
  In practice, $\nabla_S^2 f(x_k)$ may be used in a matrix-free way and not explicitly stored. For example, there may be an analytic formula for performing matrix-vector products with $\nabla_S^2 f(x_k)$, or techniques from automatic differentiation could be employed, see \cite[Section $7$]{IEEEhowto:wrightbook2}.
  \item \textbf{Quasi-Newton approximation.} One may choose $H_k^S$ to be an approximation to $\nabla_S^2 f(x_k)$ based on a limited-memory BFGS update scheme, see \cite[Section $8$]{IEEEhowto:wrightbook2}.
  This approach might be suitable in cases where the problem is ill-conditioned, but performing matrix-vector products with $\nabla_S^2 f(x_k)$ is expensive.
\end{enumerate}

\begin{remark}We make the following remarks regarding the choice of $H_k^S$.
\begin{enumerate}
  \item If any of the matrices mentioned above are not positive definite, then they can be adjusted to make them so. For example, if $H_k^S$ is diagonal, any zero that appears on the diagonal can be replaced with a positive number. Moreover, if $\nabla_S^2 f(x_k)$ is not positive definite, a multiple of the identity can be added to it.
  \item If $H_k^S$ is chosen to be a diagonal matrix, then the quadratic model \eqref{Def_Qi} is \emph{separable}, so the subproblems for each coordinate in $S$ are independent. Moreover, in some cases this gives rise to a \emph{closed form solution} for the search direction $t_k^S$. (For example, if $\Psi(x) \equiv \|x\|_1$, then soft thresholding may be used.)
\end{enumerate}
\end{remark}

An advantage of the FCD algorithm (if Option 4 is used for $H_k^S$) is that \emph{all elements of the Hessian can be accessed.} This is because the set of coordinates can change at every iteration, and so too can matrix $H_k^S$. This makes FCD extremely \emph{flexible} and is particularly advantageous when there are large off diagonal elements in the Hessian. The importance of incorporation of information from off-diagonal elements is demonstrated in numerical experiments in Section \ref{S_Numerical}.

\subsubsection{Termination criteria for model minimization}

In Step 4 of FCD, the termination criteria \eqref{Def_stoppingconditions_1} and \eqref{Def_stoppingconditions_2} are used to determine whether an acceptable search direction has been found. For composite functions of the form \eqref{Def_F}, \emph{both} conditions are required to ensure that $t_S^k$ is a descent direction. Moreover, \eqref{Def_stoppingconditions_1} is important for intuitively obvious reasons: the model should be decreased at each iteration. We will now attempt to explain/justify the use of the second termination condition \eqref{Def_stoppingconditions_2}.

The parameter $\eta_k^S$ plays a key role; it determines how `inexactly' the quadratic model \eqref{eq_subproblem} may be solved (or equivalently, how `inexact' the search direction $t_k^S$ is allowed to be). If one sets $\eta_k^S = 0$, then the model \eqref{eq_subproblem} is minimized exactly, leading to exact search directions.
On the other hand, if $\eta_k^S > 0$ then the model is approximately minimized, leading to inexact search directions.

To obtain iteration complexity guarantees for FCD, the termination conditions must be chosen carefully. In particular, it becomes obvious in Lemma~\ref{lem:bound_Q}, that \eqref{Def_stoppingconditions_1} and \eqref{Def_stoppingconditions_2} are the appropriate conditions. We now proceed to show that the conditions are mathematically sound.
By rearranging \eqref{Def_stoppingconditions_2} we obtain
\begin{equation}\label{Def_stoppingconditions_2_rewrite}
\inf_{u\in \Omega}\|u - g_S(x_k;t^S_k)\|^2+\|g_S(x_k;t^S_k)\|^2 \le (\eta_k^S \|g_S(x_k;0)\|)^2.
\end{equation}
The left hand side in \eqref{Def_stoppingconditions_2_rewrite} is a continuous function because the `$\inf$' operator is an orthonormal projection
onto a closed subspace $\Omega:=\nabla_S f(x_k) + \partial \Psi_S(x_k^S  + t_k^S)$, see \eqref{Omega}, and $\|g_S(x_k;t^S_k)\|^2$ is continuous as a function of $t^S_k$. Furthermore, there exists
a point $t^S_k$ such that \eqref{Def_stoppingconditions_2_rewrite} is satisfied, and this is the minimizer of \eqref{eq_subproblem}. In particular,
if $t_k^S$ is the minimizer of \eqref{eq_subproblem}, then from Theorem \ref{thm:optimality_conditions} we have that $g_S(x_k;t^S_k)=0\in \Omega$.
Hence, the left hand side in \eqref{Def_stoppingconditions_2_rewrite} is zero and the condition is satisfied for any $\eta^k\in[0,1)$ and $g_S(x_k;0)$. Since the right hand side in \eqref{Def_stoppingconditions_2_rewrite} is a non-negative constant,
the left hand side
is continuous and the minimizer of \eqref{eq_subproblem} satisfies \eqref{Def_stoppingconditions_2_rewrite}, then there exists a closed ball centered at the minimizer such that within this ball \eqref{Def_stoppingconditions_2_rewrite}
is satisfied. Therefore, any convergent method which can be used to minimize \eqref{eq_subproblem} will eventually satisfy the termination condition \eqref{Def_stoppingconditions_2_rewrite} without solving \eqref{eq_subproblem} exactly,
unless the right hand side in \eqref{Def_stoppingconditions_2_rewrite} is zero.

We stress that this projection operation is inexpensive for the applications that
we consider, e.g., when $\Psi$ is the $\ell_1$-norm or the $\ell_2$-norm squared, or elastic net regularization, which is a combinations of these two.
In particular, the subdifferential $\Omega$ has an `analytic' form, which also allows for a closed form solution of the left hand side of the termination criterion. Below we provide an example for the case where $\Psi=\tau\|x\|_1$.
Using the definition of $\Omega$ we have that the left hand side problem in \eqref{Def_stoppingconditions_2_rewrite} $\inf_{u\in \Omega}\|u - g_S(x_k;t^S_k)\|^2$
is equivalent to
$$
\inf_{u - \nabla_S f(x_k) - H^S (x_k)t^S_k\in \partial \Psi_S^*(x^S_k + t^S_k)}\|u - g_S(x_k;t^S_k)\|^2.
$$
By making a change of coordinates from $u$ to $v$ as $u:=\nabla_S f(x_k) + H^S (x_k)t^S_k + v$, we rewrite the previous problem as
$$
\inf_{v\in \partial \Psi_S^*(x^S_k + t^S_k)}\|v + \nabla_S f(x_k) + H^S (x_k)t^S_k - g_S(x_k;t^S_k)\|^2.
$$
Using the definition of $g_S(x_k;t^S_k)$ from \eqref{defeq:2} we have that the previous problem is equivalent to
\begin{equation}\label{eq:sol_term_cond}
\inf_{v\in \partial \Psi_S^*(x^S_k + t^S_k)}\|v -\prox_{\Psi^*_S}\big(x^S_k + t^S_k - \nabla_S f(x_k) - H^S(x_k)t^S_k \big) \|^2.
\end{equation}
The subdifferential $\partial \Psi_S^*(x^S_k + t^S_k)$ has the following coordinate-wise form
\begin{eqnarray*}
\partial \Psi_i^*(x^i_k + t^i_k)
     =
  \begin{cases}
     \tau & \text{if}\;\; x^i_k + t^i_k > 0,\\
     [-\tau,\tau] & \text{if}\;\; x^i_k + t^i_k = 0,\\
     -\tau & \text{if}\;\; x^i_k + t^i_k < 0.
  \end{cases}
  \quad \forall i\in S
\end{eqnarray*}
Using the above analytic form of the subdifferential it is easy to see that problem \eqref{eq:sol_term_cond} has a closed form solution that is inexpensive to compute.

\begin{remark}
We stress that if the regularizer is the $\ell_1$-norm, or the squared $\ell_2$-norm, which are two very popular regularizers, our inexactness criterion \eqref{Def_stoppingconditions_2_rewrite} is inexpensive to implement in practice. However, for other general regularizers, this may not be the case, and it may not be possible to compute \eqref{Def_stoppingconditions_2_rewrite} analytically. In such a case, it may be necessary to compute the minimizer of the quadratic model exactly, or to use some other iterative method, both of which come with an associated computational cost.
\end{remark}

\subsection{The line search (Step $\boldsymbol 5$)} \label{subsec:pract_line}

The stopping conditions \eqref{Def_stoppingconditions_1} and \eqref{Def_stoppingconditions_2} ensure that $t_k^S$ is a descent direction, but if the full step $x_k + U_St_k^S$ is taken, a reduction in the function value \eqref{Def_F}  is not guaranteed. To this end, we include a line search step in our algorithm in order to guarantee a monotonic decrease in the function $F$. Essentially, the line search guarantees the sufficient decrease of $F$ at every iteration, where sufficient decrease is measured by the loss function \eqref{Def_lossfunctionli}.

In particular, for fixed $\theta \in (0,1)$, we require that for some $\alpha \in (0,1]$, \eqref{Def_linesearch} is satisfied.
(In Lemma \ref{lem:Fdec} we prove the existence of such an $\alpha$.)
Notice that
\begin{eqnarray}
\notag
  \ell(x;U_St^S)-\ell(x;0) &\overset{\eqref{Def_lossfunctionli}}{=}& \langle \nabla_S f(x),t^S \rangle + \Psi(x + U_St^S) - \Psi(x)\\
  \label{Eq_differenceloss}
  &\overset{\eqref{Eq_PsivsPsii}}{=}& \langle \nabla_S f(x),t^S\rangle  + \Psi_S(x^S + t^S) - \Psi_S(x^S),
\end{eqnarray}
which shows that the calculation of the right hand side of \eqref{Def_linesearch} only depends upon block $S$, so it is inexpensive. Moreover, the line search condition (Step 5) involves the difference between function values $F(x_k) - F(x_k + \alpha U_S t_k^S)$. Fortunately, while function values can be expensive to compute, computing the difference between function values at successive iterates need not be. (See Section~\ref{SS_concrete_examples}, and the numerical results in Section~\ref{S_Numerical} and Table~\ref{LSsmall}.)

\subsection{Concrete examples} \label{SS_concrete_examples}
Now we give two examples to show that the difference between function values at successive iterates can be computed efficiently, which demonstrates that the line-search is implementable.
The first example considers a convex composite function, where the smooth term is a quadratic loss term and the second example considers a logistic regression problem.

\paragraph{Quadratic loss plus regularization example.}
Suppose that
$f(x)  = \tfrac12\|Ax-b\|^2$ and $\Psi(x)$ is not equivalent to zero,
where $A\in\mathbb{R}^{m\times N}$, $b\in\mathbb{R}^m$ and $x \in \R^N$.
Then
\begin{eqnarray}
\label{Eq_Fvaldiff}
  F(x_k) - F(x_k+\alpha U_S t_k^S) &\overset{\eqref{Eq_PsivsPsii}}{=}& f(x_k) + \Psi_S(x^S)  - f(x_k+\alpha U_S t_k^S)  - \Psi_S(x_k^S+\alpha t_k^S) \\ \notag
  &=& \Psi_S(x^S)- \alpha \langle \nabla_S f(x), t_k^S \rangle - \tfrac{\alpha^2}2\|A_it_k^S\|_2^2  - \Psi_S(x_k^S+\alpha t_k^S).
\end{eqnarray}
The calculation $F(x_k) - F(x_k+\alpha U_S t_k^S)$, as a function of $\alpha$, only depends on subset $S$. Hence, it is inexpensive.
Moreover, often the quantities in \eqref{Eq_Fvaldiff} are already needed in the computation of the search direction $t$, so regarding the line search step, they essentially come ``for free''.

\paragraph{Logistic regression example.}
Suppose that
$f(x) \equiv  \sum_{j=1}^m \log(1 + e^{-b_j a_j^T x})$ and $\Psi(x)$ is not equivalent to zero,
where $a_j^T$ is the $j$th row of a matrix $A\in\mathbb{R}^{m\times N}$
and $b_j$ is the $j$th component of vector $b\in\mathbb{R}^m$. As before, we need to evaluate \eqref{Eq_Fvaldiff}. Let us split calculation of $F(x_k) - F(x_k+\alpha U_S t_k^S) $ in parts. The first part $\Psi_S(x^S)- \Psi_S(x_k^S+\alpha t_k^S) $ is inexpensive, since it depends only on subset $S$. The second part $f(x_k) - f(x_k+\alpha U_S t_k^S)$ is more expensive because
is depends upon the logarithm.
In this case, one can calculate $f(x_0)$ \textit{once} at the beginning of the algorithm and then update $f(x_k+\alpha U_S t_k^S)$ $\forall k\ge1$ less expensively.
In particular, let us assume that the following terms:
\begin{equation}
\label{inner_prod_log}
e^{-b_j a_j^Tx_0 } \quad \forall j \quad \mbox{and} \quad f(x_0)=\sum_{j=1}^m \log(1 + e^{-b_j a_j^Tx_0}),
\end{equation}
 are calculated once and stored in memory.
Then, at iteration $1$,  the calculation of
$f(x_0 + \alpha U_S t^S_0) = \sum_{j=1}^m \log(1 + e^{-b_j a_j^Tx_0}e^{-\alpha b_j a_j^T(U_S t^S_0)})$
is required for different values of $\alpha$ by the backtracking line search algorithm.
The most demanding task in calculating $f(x_0 + \alpha U_S t^S_0)$ is the calculation of the products $b_j a_j^T(U_S t^S_0)$ $\forall j$ \textit{once}, which is inexpensive since $\forall j$ this operation
depends only on subset $S$. Having $b_j a_j^T(U_S t^S_0)$ $\forall j$ and \eqref{inner_prod_log} calculation of $f(x_0) - f(x_0 + \alpha U_S t^S_0)$ for different values of $\alpha$ is inexpensive.
At the end of the process, $f(x_1)$ and $e^{-b_j a_j^T x_1}$ $\forall j $ are given for free, and the process can be repeated for the calculation of $f(x_1) - f(x_1 + \alpha U_S t^S_1)$ etc.

\begin{remark}
  The examples above show that, for quadratic loss functions, and logistic regression problems, computing function values is relatively inexpensive. Thus, for problems with this, or similiar structure, FCD is competitive, even though it requires function evaluations for the line search. (The competitiveness of FCD is confirmed in our numerical experiments in Section~\ref{S_Numerical}.) However, we stress that, for other applications with more general objective functions, it may not be possible to perform function evaluations efficiently, in which case FCD \emph{may not} be a suitable algorithm, and a different algorithm could be used.
\end{remark}


\section{Bounded step-size and monotonic decrease of $F$} \label{S_GlobalConvergence}
Throughout this section we denote $H^S_k \equiv H^S(x_k)$. The following assumptions are made about $H_k^S$ and $f$.
Assumption \ref{Assump_HisPD} explains that the Hessian approximation $H_k^S$ must be positive definite for all subsets of coordinates $S$ at all iterations $k$.
Assumption \ref{Assump_fisLipschitz} explains that the gradient of $f$ must be Lipschitz for all subsets $S$.

\begin{assumption}\label{Assump_HisPD}
  There exist $0 < \lambda_S \leq \Lambda_S$, such that the sequence of symmetric $\{H_k^S\}_{k\geq 0}$ satisfies:
  \begin{equation}\label{Assumption_lambdai}
    0 < \lambda_S \leq \lambda_{\min}(H_k^S) \quad \text{and} \quad \lambda_{\max}(H_k^S) \leq \Lambda_S, \quad \text{for all } S\subseteq [N].
  \end{equation}
\end{assumption}
\begin{assumption}\label{Assump_fisLipschitz}
  The function $f$ is smooth and satisfies \eqref{S2_Lipschitz} for all $S\subseteq [N]$.
\end{assumption}
The next assumption regards the relationship between the parameters introduced in Assumptions~\ref{Assump_HisPD} and \ref{Assump_fisLipschitz}.
\begin{assumption}\label{Assump_LambdaLrelation}
  The relation $\lambda_S \leq L_S$ holds for all $S \subseteq[N]$.
\end{assumption}
The following lemma shows that if $t^S_k$ is nonzero, then $F$ is decreased. The proof closely follows that of \cite[Theorem $3.1$]{sqa}.
\begin{lemma}
\label{lem:Fdec}
  Let Assumptions \ref{Assume_Sampling}, \ref{Assump_HisPD}, \ref{Assump_fisLipschitz} and \ref{Assump_LambdaLrelation} hold, and let $\theta\in(0,1)$. Given $x_k$, let $S$ and $t^{S}_k \neq 0$ be generated by FCD (Algorithm~\ref{FCD}) with $\eta_k^S\in[0,1)$. Then Step 6 of FCD will accept a step-size $\alpha$ that satisfies
  \begin{equation}\label{Eq_alphamin}
    \alpha \geq \alpha_S, \quad \text{where} \quad \alpha_S\eqdef(1-\theta)\tfrac{\lambda_S}{2L_S}>0.
  \end{equation}
  Furthermore,
  \begin{equation}\label{Eq_F_ubont}
    F(x_k) - F(x_k + \alpha U_St^{S}_k)  > \theta(1-\theta)\tfrac{\lambda_S^2}{4L_S}\|t^S_k\|^2.
  \end{equation}
\end{lemma}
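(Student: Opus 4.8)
The plan is to follow the classical sufficient-decrease argument for Armijo-type line searches, adapted to the block setting, with the backbone being a comparison between the true objective $F$ and the loss model $\ell(x_k;\cdot)$ of \eqref{Def_lossfunctionli}. These two functions share the same nonsmooth part $\Psi$ and differ only through the smoothness of $f$, so the analysis reduces to controlling that smooth gap. The first step is to extract the consequence of the stopping condition. Writing out $Q_S(x_k;t_k^S)-Q_S(x_k;0)$ via \eqref{eq:1} and invoking \eqref{Eq_QvsQS}, condition \eqref{Def_stoppingconditions_1} becomes $\langle\nabla_S f(x_k),t_k^S\rangle+\Psi_S(x_k^S+t_k^S)-\Psi_S(x_k^S)<-\tfrac12\|t_k^S\|_{H_k^S}^2$. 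By \eqref{Assumption_lambdai} the right-hand side is at most $-\tfrac{\lambda_S}{2}\|t_k^S\|^2<0$, so the predicted decrease $d:=\ell(x_k;0)-\ell(x_k;U_St_k^S)$, which by \eqref{Eq_differenceloss} equals minus that left-hand side, satisfies $d>\tfrac{\lambda_S}{2}\|t_k^S\|^2>0$. This is exactly where positive definiteness of $H_k^S$ and the hypothesis $t_k^S\neq 0$ are used.

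Next I would quantify the gap between $F$ and $\ell$ along the ray $\alpha\mapsto x_k+\alpha U_St_k^S$. Applying the descent inequality \eqref{S2_upperbound} with step $\alpha t_k^S$, together with $F(x_k)=\ell(x_k;0)$ and the observation that $\Psi$ contributes identically to $F(x_k+\alpha U_St_k^S)$ and to $\ell(x_k;\alpha U_St_k^S)$, yields $F(x_k)-F(x_k+\alpha U_St_k^S)\ge\big(\ell(x_k;0)-\ell(x_k;\alpha U_St_k^S)\big)-\tfrac{L_S\alpha^2}{2}\|t_k^S\|^2$. Separately, convexity of $\Psi_S$ along the segment from $x_k^S$ to $x_k^S+t_k^S$ gives the lower bound $\ell(x_k;0)-\ell(x_k;\alpha U_St_k^S)\ge\alpha d$ for every $\alpha\in[0,1]$. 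Combining the two, the line-search test \eqref{Def_linesearch} is implied by $(1-\theta)\alpha d\ge\tfrac{L_S\alpha^2}{2}\|t_k^S\|^2$; substituting $d>\tfrac{\lambda_S}{2}\|t_k^S\|^2$ shows this holds for every $\alpha\le(1-\theta)\tfrac{\lambda_S}{L_S}=2\alpha_S$.

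Finally I would convert this interval of admissible steps into the bound \eqref{Eq_alphamin}. Since $\lambda_S\le L_S$ by Assumption~\ref{Assump_LambdaLrelation}, we have $\alpha_S<\tfrac12$, so backtracking is well defined and $2\alpha_S\le 1$. Starting from $\alpha=1$ and halving, the search cannot return a step below $\alpha_S$: if it returned $\alpha<\alpha_S$, then the immediately preceding rejected trial $2\alpha<2\alpha_S$ would already satisfy the test, a contradiction. Hence the accepted step obeys $\alpha\ge\alpha_S$. Substituting $\alpha\ge\alpha_S$ into the accepted inequality $F(x_k)-F(x_k+\alpha U_St_k^S)\ge\theta\big(\ell(x_k;0)-\ell(x_k;\alpha U_St_k^S)\big)\ge\theta\alpha d>\theta\alpha\tfrac{\lambda_S}{2}\|t_k^S\|^2$ and using $\alpha\ge(1-\theta)\tfrac{\lambda_S}{2L_S}$ produces exactly \eqref{Eq_F_ubont}.

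I expect the main obstacle to be the bookkeeping that turns ``the test holds on $(0,2\alpha_S]$'' into the guarantee $\alpha\ge\alpha_S$: one must commit to the halving backtracking rule that supplies the factor of two, check that $2\alpha_S\le 1$ so backtracking triggers consistently, and make sure the convexity estimate for $\Psi_S$ is applied in the correct direction, namely as a \emph{lower} bound on the predicted decrease $\ell(x_k;0)-\ell(x_k;\alpha U_St_k^S)$. The smooth/nonsmooth split that lets $\Psi$ cancel between $F$ and $\ell$ is routine but should be stated cleanly, since every inequality in the chain ultimately rests on it.
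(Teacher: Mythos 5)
Your proposal is correct and follows essentially the same route as the paper's proof: the stopping condition \eqref{Def_stoppingconditions_1} yields the lower bound $\ell(x_k;0)-\ell(x_k;U_St_k^S)>\tfrac{\lambda_S}{2}\|t_k^S\|^2$, the Lipschitz bound \eqref{S2_upperbound} controls the gap between $F$ and $\ell$, convexity of $\Psi$ gives \eqref{Eq_ell_convexity}, and the halving rule supplies the factor of two in \eqref{Eq_alphamin}, after which \eqref{Eq_F_ubont} follows by the same substitution chain as the paper's \eqref{FvsH}. Your explicit handling of the backtracking bookkeeping (checking $2\alpha_S\le 1$ and arguing by contradiction on the last rejected trial) is a slightly more careful writeup of what the paper states in one sentence, but it is not a different argument.
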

\begin{proof}
From \eqref{Def_stoppingconditions_1}, we have
  $
    0 > Q(x_k ; U_St_k^S) - Q(x_k;0) \overset{\eqref{Def_Q}+\eqref{Def_lossfunctionli}}{=} \ell(x_k;U_St_k^S) - \ell(x_k;0) + \tfrac12\|t_k^S\|_{H_k^S}^2.
  $
  Rearranging gives
  \begin{equation}\label{in:1}
    \ell(x_k;0) - \ell(x_k; U_St_k^S) > \tfrac12\|t_k^S\|_{H_k^S}^2 \overset{\eqref{Assumption_lambdai}}{\geq} \tfrac12\lambda_S\|t_k^S\|^2.
  \end{equation}
  Denote $x_{k+1} = x_k + \alpha U_St_k^S$. By Assumption \ref{Assump_fisLipschitz}, for all $\alpha \in (0,1]$, we have
  \begin{equation*}
    F(x_{k+1}) \leq f(x_k) + \alpha \langle \nabla_S f(x_k), t_k^S \rangle + \tfrac{\Lip_S}{2}\alpha^2 \|t_k^S\|^2 + \Psi(x_k+\alpha U_St_k^S).
  \end{equation*}
  Adding $\Psi(x_k)$ to both sides of the above and rearranging gives
\begin{eqnarray}
  \label{in:3}
  \notag
    F(x_k) - F(x_{k+1}) &\ge& -\alpha \langle \nabla_S f(x_k), t_k^S \rangle - \tfrac{\Lip_S}{2}\alpha^2 \|t_k^S\|^2  - \Psi(x_k+\alpha U_St_k^S) + \Psi(x_k) \\
    &\overset{\eqref{Eq_differenceloss}}{=}& \ell(x_k;0) - \ell(x_k ; \alpha U_St_k^S) - \tfrac{\Lip_S}{2} \alpha^2 \|t_k^S\|^2.
  \end{eqnarray}
 By convexity of $\Psi(x)$ we have that
\begin{equation}\label{Eq_ell_convexity}
  \ell(x_k;0) - \ell(x_k;\alpha U_St_k^S)\ge \alpha(\ell(x_k;0) - \ell(x_k;U_St_k^S)).
\end{equation}
  Then
  \begin{eqnarray}\label{in:10}
    F(x_k) - F(x_{k+1}) &-& \theta (\ell(x_k;0) - \ell(x_k ; \alpha U_St_k^S)) \nonumber \\
    &\overset{\eqref{in:3}}{\geq}& (1-\theta) \big(\ell(x_k;0) - \ell(x_k ; \alpha U_St^S)\big) - \tfrac{\Lip_S}{2}\alpha^2 \|t_k^S\|^2  \nonumber \\
    &\overset{\eqref{Eq_ell_convexity}}{\geq}& \alpha(1-\theta) \big(\ell(x_k;0) - \ell(x_k ;  U_St_k^S)\big) - \tfrac{\Lip_S}{2}\alpha^2 \|t_k^S\|^2  \nonumber \\
    & \stackrel{\eqref{in:1}}{>}&  \tfrac12\big(\alpha(1-\theta)\lambda_S \|t_k^S\|^2 - \Lip_S\alpha^2 \|t_k^S\|^2\big) \nonumber \\
    & =& \tfrac{\alpha}{2}\big((1-\theta)\lambda_S - \Lip_S\alpha\big) \|t_k^S\|^2  \geq  0,
  \end{eqnarray}
  if $(1-\theta)\lambda_S - \Lip_S\alpha \geq 0$. Therefore, we observe that if $\alpha$ satisfies $0 < \alpha \le (1-\theta)\frac{\lambda_S}{L_S}$,
  then $\alpha$ also satisfies the backtracking line search step of FCD (i.e., a function value reduction is achieved). Suppose that any $\alpha$ that is rejected by the line search is halved for the next line search trial. Then, it is guaranteed that the $\alpha$ that is accepted satisfies \eqref{Eq_alphamin}.

   We now show that \eqref{Eq_F_ubont} holds. By the previous arguments, the line search condition \eqref{Def_linesearch} is guaranteed to be satisfied for some step size $\alpha$. Then,
\begin{eqnarray}
  \notag
    F(x_k) - F(x_{k+1}) &\overset{\eqref{Def_linesearch}+\eqref{Eq_ell_convexity}}{\geq}&
    \theta \alpha(\ell(x_k;0)- \ell(x_k; U_St_k^S)) \\
    \label{FvsH}
    &\overset{\eqref{in:1}}{>}& \tfrac{\alpha\theta\lambda_S}{2}\|t_k^S\|^2 \geq \tfrac{\alpha_S\theta\lambda_S}{2}\|t_k^S\|^2
  \end{eqnarray}
  Using the definition of $\alpha_S$ in \eqref{Eq_alphamin} gives the result.
\end{proof}

The following lemma bounds the norm of the direction $t_k^S$ in terms of $g_S(x_k,0)$. This proof closely follows that of \cite[Theorem $3.1$]{sqa}.
\begin{lemma}\label{lem:Fdec2}
Let Assumptions \ref{Assume_Sampling}, \ref{Assump_HisPD}, \ref{Assump_fisLipschitz} and \ref{Assump_LambdaLrelation} hold, and let $\theta\in(0,1)$. Given $x_k$, let $S$, $t^{S}_k\neq 0$ and $\alpha$ be generated by FCD (Algorithm~\ref{FCD}) with $\eta_k^S\in[0,1)$. Then
\begin{equation}\label{in:8}
  \|t_k^S\| \ge \gamma_S \|g_S(x_k;0)\|, \quad \mbox{where} \quad \gamma_s \eqdef \tfrac{1-\eta_k^S}{1 + 2\Lambda_S}.
\end{equation}
Moreover,
\begin{equation}\label{Eq_Fubong}
  F(x_k) - F(x_k + {\alpha}U_St_k^S) > \theta(1-\theta)\tfrac{\lambda_S^2}{4L_S} \gamma_S^2 \|g_S(x_k;0)\|^2.
\end{equation}
\end{lemma}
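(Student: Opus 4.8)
The plan is to establish the two inequalities \eqref{in:8} and \eqref{Eq_Fubong} in sequence, where the first is the genuine new content and the second follows almost immediately by substitution into Lemma~\ref{lem:Fdec}. The core task is to relate the norm of the inexact search direction $t_k^S$ to the stationarity measure $g_S(x_k;0)$, using the termination condition \eqref{Def_stoppingconditions_2}. My strategy for \eqref{in:8} is to start from the definition \eqref{defeq:2} of $g_S(x_k;t_k^S)$ and compare it with $g_S(x_k;0)$, exploiting the nonexpansiveness of the proximal operator of the conjugate \eqref{Eq_proxnonexpansive} together with the bound $\lambda_{\max}(H_k^S)\le\Lambda_S$ from Assumption~\ref{Assump_HisPD}.

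\paragraph{Deriving the direction bound \eqref{in:8}.}
First I would use the triangle inequality to write
\[
\|g_S(x_k;0)\| \le \|g_S(x_k;0) - g_S(x_k;t_k^S)\| + \|g_S(x_k;t_k^S)\|.
\]
For the first term, subtracting the two instances of \eqref{defeq:2} gives a difference of the form $H^S(x_k)t_k^S$ plus a difference of two $\prox_{\Psi_S^*}$ evaluations whose arguments differ by $t_k^S - H^S(x_k)t_k^S$; applying nonexpansiveness \eqref{Eq_proxnonexpansive} to the prox terms and $\|H_k^S t_k^S\|\le\Lambda_S\|t_k^S\|$ to the linear term should yield a bound of the shape $(1+2\Lambda_S)\|t_k^S\|$. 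For the second term, I would control $\|g_S(x_k;t_k^S)\|$ directly from the stopping condition \eqref{Def_stoppingconditions_2}: since the left-hand infimum in \eqref{Def_stoppingconditions_2} is nonnegative, that inequality forces $\|g_S(x_k;t_k^S)\|^2 \le (\eta_k^S\|g_S(x_k;0)\|)^2$, hence $\|g_S(x_k;t_k^S)\| \le \eta_k^S\|g_S(x_k;0)\|$. Combining the two estimates gives
\[
\|g_S(x_k;0)\| \le (1+2\Lambda_S)\|t_k^S\| + \eta_k^S\|g_S(x_k;0)\|,
\]
and rearranging (collecting the $g_S(x_k;0)$ terms) produces exactly $\|t_k^S\| \ge \tfrac{1-\eta_k^S}{1+2\Lambda_S}\|g_S(x_k;0)\| = \gamma_S\|g_S(x_k;0)\|$.

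\paragraph{Deriving \eqref{Eq_Fubong}.}
Once \eqref{in:8} is in hand, the second claim is routine: Lemma~\ref{lem:Fdec} already gives $F(x_k)-F(x_k+\alpha U_St_k^S) > \theta(1-\theta)\tfrac{\lambda_S^2}{4L_S}\|t_k^S\|^2$, so I would simply square \eqref{in:8} to get $\|t_k^S\|^2 \ge \gamma_S^2\|g_S(x_k;0)\|^2$ and substitute, yielding \eqref{Eq_Fubong} directly.

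\paragraph{Main obstacle.}
The delicate step is the factor $(1+2\Lambda_S)$ in the bound on $\|g_S(x_k;0)-g_S(x_k;t_k^S)\|$, where I must track exactly how the linear term $H_k^St_k^S$ and the two prox arguments combine. The $\prox_{\Psi_S^*}$ arguments differ by $(I-H_k^S)t_k^S$, so nonexpansiveness contributes $\|(I-H_k^S)t_k^S\|\le(1+\Lambda_S)\|t_k^S\|$, and the explicit linear term adds another $\Lambda_S\|t_k^S\|$, giving $(1+2\Lambda_S)\|t_k^S\|$; I would verify that these norm-splitting inequalities are applied with the correct orientation and that no hidden dependence on $\lambda_S$ or $L_S$ slips in. The remaining manipulations are elementary, so I expect the whole argument to be short once the triangle-inequality decomposition is set up correctly.
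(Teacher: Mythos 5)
Your proposal is correct and is essentially the paper's own argument: the paper extracts $\|g_S(x_k;t_k^S)\| \le \eta_k^S\|g_S(x_k;0)\|$ from the nonnegativity of the infimum in \eqref{Def_stoppingconditions_2}, bounds $\|g_S(x_k;t_k^S)-g_S(x_k;0)\|$ by $\|H_k^S t_k^S\| + \|(I-H_k^S)t_k^S\| \le (1+2\Lambda_S)\|t_k^S\|$ via nonexpansiveness of $\prox_{\Psi_S^*}$, and then substitutes \eqref{in:8} into \eqref{Eq_F_ubont} of Lemma~\ref{lem:Fdec}, exactly as you do. The only cosmetic difference is that the paper phrases the first step via the reverse triangle inequality $(1-\eta_k^S)\|g_S(x_k;0)\| \le \|g_S(x_k;t_k^S)-g_S(x_k;0)\|$ rather than your forward triangle inequality followed by rearrangement, which is the same computation.
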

\begin{proof}
From the termination condition \eqref{Def_stoppingconditions_2} we have that $\|g_S(x_k;t^S_k)\| \le \eta_k^S \|g_S(x_k;0)\|.$
  Using the reverse triangular inequality and the fact that $\prox_{\Psi_S^*}(\cdot)$ is nonexpansive, we have that
  \begin{eqnarray*}
  (1-\eta_k^S)\|g_S(x_k;0)\| & \le& \|g_S(x_k;0)\| - \|g_S(x_k;t^S_k)\| \\
  & \le& \|g_S(x_k;t_k^S) - g_S(x_k;0) \| \\
  & \stackrel{\eqref{defeq:2}}{=} & \|H_k^S t_k^S + \prox_{\Psi_S^*}\big(x_k^S + t_k^S - (\nabla_S f(x_k) + H_k^S t_k^S)  \big)  \\
  & &- \prox_{\Psi_S^*}\big(x_k^S - \nabla_S f(x_k) \big)\| \\
  & \overset{\eqref{Eq_proxnonexpansive}}{\le} & \|H_k^S t_k^S\| + \| (I-  H_k^S)t_k^S \| \\
  & \le & (1 + 2\|H_k^S\| )\|t_k^S\| \\
  & \overset{\eqref{Assumption_lambdai}}{\le} & (1 + 2\Lambda_S)\|t_k^S\|.
  \end{eqnarray*}
  Rearranging gives \eqref{in:8}, and combining \eqref{in:8} with \eqref{Eq_F_ubont} gives \eqref{Eq_Fubong}.
\end{proof}


\section{Technical results}\label{S_technicalresults}
In this section we will present several technical results that will be necessary when establishing our main iteration complexity results for FCD. The first result (Theorem~\ref{PeterThm}) is taken from \cite{Richtarik14} and will be used to finish all our iteration complexity results for FCD. The second result (Lemma~\ref{lem:bound_Q}) provides an upper bound on the decrease in the model that is obtained when an inexact search direction is used in FCD. The final two results provide upper bounds on the difference of function values $F(x_{k+1})- F(x_k)$ (Lemma~\ref{lem:another_intermediate}) and the expected distance to the solution $\mathbf{E}[F(x_{k+1})-F^*]$ (Lemma~\ref{Lemma_fandr}).

The following Theorem will be used at the end of all our proofs to obtain iteration complexity results for FCD. (It will be used with $\xi_k = F(x_k) - F^*$.)
\begin{theorem}[Theorem 1 in \cite{Richtarik14}]\label{PeterThm}
  Fix $\xi_0\in \R^N$ and let $\{\xi_k\}_{k \geq 0}$ be a sequence of random vectors in $\R^N$ with $\xi_{k+1}$ depending on $\xi_k$ only.
  Let $\{\xi_k\}_{k\ge 0}$ be a nonnegative non increasing sequence of random variables and assume that it has one of the following properties:
  \begin{itemize}
    \item[(i)] $\E[\xi_{k+1}|\xi_k] \leq \xi_k - \frac{\xi_k^2}{c_1}$ for all $k$, where $c_1>0$ is a constant,
    \item[(ii)] $\E[\xi_{k+1}|\xi_k] \leq (1-\frac{1}{c_2})\xi_k$ for all $k$ such that $\xi_k\geq \epsilon$, where $c_2>1$ is a constant.
  \end{itemize}
  Choose accuracy level $0<\epsilon<\xi_0$, confidence level $\rho \in (0,1)$.
  If (i) holds and we choose $\epsilon <c_1$ and
  $
    K \geq \tfrac{c_1}{\epsilon}\left(1 + \ln \tfrac{1}{\rho}\right) +2 - \tfrac{c_1}{\xi_0},
  $
  or if property (ii) holds and we choose
  $
    K\geq c_2 \ln\tfrac{\xi_0}{\epsilon \rho},
  $
  then $\mathbf{P}(\xi_K \leq \epsilon )\geq 1-\rho$.
\end{theorem}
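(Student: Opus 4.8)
The plan is to convert the one-step expected-decrease recursions (i) and (ii) into a high-probability guarantee by combining Markov's inequality with the monotonicity of $\{\xi_k\}$ and a confidence-boosting step. The structural fact that drives everything is that, since the sequence is non-increasing, the ``bad'' event $A_k \eqdef \{\xi_k \geq \epsilon\}$ is nested, $A_{k+1} \subseteq A_k$: if $\xi_{k+1} \geq \epsilon$ then $\xi_k \geq \xi_{k+1} \geq \epsilon$. Hence on $A_k$ the conditioning hypothesis $\xi_k \geq \epsilon$ required by (ii) is automatically in force, and $\mathbf{1}_{A_k}$ is a function of $\xi_k$ only, so it may be pulled inside the conditional expectation.

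I would treat case (ii) first. On $A_k$ the linear recursion gives $\E[\xi_{k+1}\mathbf{1}_{A_k}] = \E[\mathbf{1}_{A_k}\E[\xi_{k+1}\mid\xi_k]] \leq (1-\tfrac{1}{c_2})\E[\xi_k\mathbf{1}_{A_k}]$, and because $A_k \subseteq A_{k-1}$ with $\xi_k \geq 0$ we have $\E[\xi_k\mathbf{1}_{A_k}] \leq \E[\xi_k\mathbf{1}_{A_{k-1}}]$. Setting $u_k \eqdef \E[\xi_k\mathbf{1}_{A_{k-1}}]$ (with $A_{-1}$ the whole space, so $u_0 = \xi_0$) yields the clean chain $u_{k+1} \leq (1-\tfrac{1}{c_2})u_k$, hence $u_k \leq (1-\tfrac{1}{c_2})^k\xi_0 \leq e^{-k/c_2}\xi_0$. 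A one-line Markov step, $\epsilon\,\mathbf{P}(A_K) \leq \E[\xi_K\mathbf{1}_{A_{K-1}}] = u_K$, gives $\mathbf{P}(\xi_K \geq \epsilon) \leq \tfrac{\xi_0}{\epsilon}e^{-K/c_2}$, and forcing the right-hand side below $\rho$ produces exactly $K \geq c_2\ln\tfrac{\xi_0}{\epsilon\rho}$.

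For case (i) the key observation is that on $A_k$ one has $\xi_k - \xi_k^2/c_1 = \xi_k(1-\xi_k/c_1) \leq (1-\epsilon/c_1)\xi_k$, so the quadratic recursion collapses to a linear one with effective rate $c_2 = c_1/\epsilon$ precisely on the region that matters. Feeding this into the case-(ii) chain already gives a valid bound, but only with the weaker $\ln\tfrac{\xi_0}{\epsilon\rho}$ dependence; to recover the advertised $1+\ln\tfrac{1}{\rho}$ I would split the run into two phases. In Phase~1 I take full expectations in (i) and apply Jensen, $\E[\xi_{k+1}] \leq \E[\xi_k] - (\E[\xi_k])^2/c_1$, then telescope the reciprocals, $\tfrac{1}{\E[\xi_{k+1}]} \geq \tfrac{1}{\E[\xi_k]} + \tfrac{1}{c_1}$, to obtain $\E[\xi_k] \leq (\tfrac{1}{\xi_0}+\tfrac{k}{c_1})^{-1}$; this drives $\E[\xi_{K_1}] \leq \epsilon$ after $K_1 \approx \tfrac{c_1}{\epsilon}-\tfrac{c_1}{\xi_0}$ steps. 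In Phase~2 I restart the nested-event chain at time $K_1$ with the linear rate $1-\epsilon/c_1$, obtaining $\mathbf{P}(\xi_K \geq \epsilon) \leq \tfrac{1}{\epsilon}\E[\xi_{K_1}](1-\epsilon/c_1)^{K-K_1} \leq e^{-(K-K_1)\epsilon/c_1}$; requiring this to be at most $\rho$ needs $K-K_1 \geq \tfrac{c_1}{\epsilon}\ln\tfrac{1}{\rho}$, and summing the two phases while absorbing the two integer ceilings into the additive $+2$ reproduces $K \geq \tfrac{c_1}{\epsilon}(1+\ln\tfrac{1}{\rho}) + 2 - \tfrac{c_1}{\xi_0}$.

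The main obstacle is the sharpness of case (i): the naive reduction to case (ii) is easy but loses a factor, so the real work is organising the split so that Phase~1 controls only the (deterministic-looking) expectation while Phase~2 performs the confidence boosting from an already-small expected value. The secondary technical points are verifying $\E[\xi_k] > 0$ along the way so that the reciprocal telescoping is legitimate (if $\E[\xi_{k_0}] = 0$ for some $k_0$, then $\xi_{k_0} = 0$ almost surely, monotonicity forces $\xi_K = 0$, and the claim is trivial), and the bookkeeping of the integer rounding in both phases, which is where the additive constant $2$ originates. As this is Theorem~1 of \cite{Richtarik14}, I would ultimately cite it, but the sketch above is the self-contained route I would follow.
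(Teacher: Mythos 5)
Your proof is correct and, modulo notation, it is exactly the argument of Theorem~1 in Richt\'arik and Tak\'a\v{c} \cite{Richtarik14}, which this paper cites without reproducing: your nested-event indicators $\mathbf{1}_{A_k}$ play the role of their truncated variables $\xi_k\mathbf{1}_{\{\xi_k\geq\epsilon\}}$, and your two-phase treatment of case (i) --- Jensen plus reciprocal telescoping to drive $\E[\xi_k]$ below $\epsilon$, followed by the linear-rate Markov boost at rate $1-\epsilon/c_1$ --- is precisely their route to the sharp $1+\ln\tfrac{1}{\rho}$ dependence. Your handling of the degenerate case $\E[\xi_{k_0}]=0$ and the absorption of the two integer ceilings into the additive $+2$ are likewise sound.
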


The following result gives an upper bound on the decrease in the quadratic model when an inexact update is used in FCD.
\begin{lemma}\label{lem:bound_Q}
Let Assumptions \ref{Assume_Sampling}, \ref{Assump_HisPD}, and \ref{Assump_fisLipschitz} hold, let $\theta\in(0,1)$, and let $t_*^S$ denote the exact minimizer of subproblem \eqref{eq_subproblem}. Given $x_k$, let $S$, $t^{S}_k\neq 0$ and $\alpha$ be generated by FCD (Algorithm~\ref{FCD}) with $\eta_k^S\in[0,1)$, and let $x_{k+1} = x_k + \alpha U_St_k^S $. Then 
\begin{align*}
Q(x_k; U_S t_k^S) - Q(x_k;0) & \le Q(x_k; U_S t_*^S) - Q(x_k;0) + \frac{2(\eta_k^S)^2L_S}{\theta(1-\theta)\lambda_S^3\gamma_S^2}(F(x_k) - F(x_{k+1})),
\end{align*}
where $\eta_k^S$, $\theta$ and $\gamma_S$ are defined in \eqref{Def_stoppingconditions_2},  \eqref{Def_linesearch} and \eqref{in:8}, respectively.
\end{lemma}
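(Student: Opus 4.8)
The plan is to prove the bound directly for the model gap $Q(x_k;U_St_k^S)-Q(x_k;U_St_*^S)$, since the common term $-Q(x_k;0)$ appears on both sides of the claimed inequality and cancels. The first step is to pass from the full model $Q$ to the block model $Q_S$ via the identity \eqref{Eq_QvsQS}, so that the whole argument takes place in the $|S|$-dimensional space where $Q_S(x_k;\cdot)$ is defined by \eqref{Def_Qi}. The structural fact that makes everything work is that $Q_S(x_k;\cdot)$ is $\lambda_S$-strongly convex: its smooth part has Hessian $H_k^S\succeq\lambda_S I$ by Assumption~\ref{Assump_HisPD}, and $\Psi_S$ is convex, so the sum is strongly convex with modulus $\lambda_S$.

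Next I would use strong convexity to relate suboptimality in the model to the subdifferential at the inexact point $t_k^S$. For a $\lambda_S$-strongly convex function, the gap between its value at $t_k^S$ and its minimal value $Q_S(x_k;t_*^S)$ is controlled by the squared distance of the origin to the subdifferential at $t_k^S$, namely
\[
Q_S(x_k;t_k^S)-Q_S(x_k;t_*^S)\le \tfrac{1}{2\lambda_S}\inf_{u\in\Omega}\|u\|^2,
\]
with $\Omega=\partial Q_S(x_k;t_k^S)$ as in \eqref{Omega}; this follows by combining the strong-convexity lower bound $Q_S(x_k;t_*^S)\ge Q_S(x_k;t_k^S)+\langle u,t_*^S-t_k^S\rangle+\tfrac{\lambda_S}{2}\|t_*^S-t_k^S\|^2$ (for $u\in\Omega$) with a Cauchy--Schwarz and completion-of-squares step, then taking the infimum over $u$. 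The remaining task is to bound $\inf_{u\in\Omega}\|u\|^2$ using the stopping criterion. Here I would invoke the rewritten termination condition \eqref{Def_stoppingconditions_2_rewrite}, which bounds $\inf_{u\in\Omega}\|u-g_S(x_k;t_k^S)\|^2+\|g_S(x_k;t_k^S)\|^2$ by $(\eta_k^S\|g_S(x_k;0)\|)^2$, and relate this to $\inf_{u\in\Omega}\|u\|$ using that the distance-to-$\Omega$ function is $1$-Lipschitz. This yields $\inf_{u\in\Omega}\|u\|^2\le (\eta_k^S)^2\|g_S(x_k;0)\|^2$, and hence $Q_S(x_k;t_k^S)-Q_S(x_k;t_*^S)\le \tfrac{(\eta_k^S)^2}{2\lambda_S}\|g_S(x_k;0)\|^2$.

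Finally, I would convert $\|g_S(x_k;0)\|^2$ into the achieved function decrease by applying \eqref{Eq_Fubong} of Lemma~\ref{lem:Fdec2}, which gives $\|g_S(x_k;0)\|^2<\tfrac{4L_S}{\theta(1-\theta)\lambda_S^2\gamma_S^2}\big(F(x_k)-F(x_{k+1})\big)$; substituting this into the previous display recovers exactly the stated constant $\tfrac{2(\eta_k^S)^2L_S}{\theta(1-\theta)\lambda_S^3\gamma_S^2}$. I expect the main obstacle to be the middle step, where $\inf_{u\in\Omega}\|u\|^2$ must be bounded by the termination quantity $(\eta_k^S\|g_S(x_k;0)\|)^2$ with the \emph{sharp} constant. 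A crude triangle-inequality estimate only gives $\inf_{u\in\Omega}\|u\|^2\le 2(\eta_k^S\|g_S(x_k;0)\|)^2$, which would lose a factor of two (producing $4$ rather than $2$ in the final bound), so the delicate point is to establish the Pythagorean-type inequality $\inf_{u\in\Omega}\|u\|^2\le \inf_{u\in\Omega}\|u-g_S(x_k;t_k^S)\|^2+\|g_S(x_k;t_k^S)\|^2$. This hinges on the specific structure of $g_S(x_k;t_k^S)$ as a proximal residual relative to the closed convex set $\Omega$, rather than on any generic property of the two terms in the termination condition.
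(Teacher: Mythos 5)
Your route is the paper's route step for step: the same add-and-subtract decomposition of $Q(x_k;U_St_k^S)-Q(x_k;0)$ around $t_*^S$ (equation \eqref{bound_Q} in the paper), the same strong-convexity estimate $Q_S(x_k;t_k^S)-Q_S(x_k;t_*^S)\le\tfrac{1}{2\lambda_S}\inf_{u\in\Omega}\|u\|^2$, the same invocation of the termination condition in the form \eqref{Def_stoppingconditions_2_rewrite}, and the same final substitution of \eqref{Eq_Fubong} from Lemma~\ref{lem:Fdec2}, with identical arithmetic producing the constant $\tfrac{2(\eta_k^S)^2L_S}{\theta(1-\theta)\lambda_S^3\gamma_S^2}$. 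The one place you go beyond the paper is your justification of the ``Pythagorean-type'' step, and that justification points the wrong way. Write $a=\nabla_S f(x_k)+H_k^S t_k^S$, $z=x_k^S+t_k^S$ and $p=\prox_{\Psi_S}(z-a)$; by the Moreau decomposition, $g:=g_S(x_k;t_k^S)=z-p$ in \eqref{defeq:2}, with $g-a\in\partial\Psi_S(p)$, while every $u\in\Omega=a+\partial\Psi_S(z)$ (see \eqref{Omega}) has the form $u=a+v$ with $v\in\partial\Psi_S(z)$. Monotonicity of $\partial\Psi_S$ between the points $z$ and $p$ then gives $\langle u-g,\,g\rangle=\langle v-(g-a),\,z-p\rangle\ge 0$, hence $\|u\|^2\ge\|u-g\|^2+\|g\|^2$ for \emph{every} $u\in\Omega$ --- the exact reverse of the inequality you assert. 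And it genuinely fails as an unconditional statement: in one dimension with $\Psi_S=|\cdot|$, $z=1$, $a=5$, $H_k^S=1$, one gets $\Omega=\{6\}$ and $g=4$, so $\inf_{u\in\Omega}\|u\|^2=36>20=\inf_{u\in\Omega}\|u-g\|^2+\|g\|^2$. The proximal-residual structure you appeal to is therefore not what makes the sharp step true, and your proposal does not establish it.

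Two mitigating remarks. First, this is precisely the step the paper itself takes without comment --- its chain $\|u\|^2=\|u+g-g\|^2\le\|u-g\|^2+\|g\|^2$ presumes the same sign condition on the cross term --- so your write-up faithfully reproduces the published argument, soft spot included, and you deserve credit for isolating exactly the step on which everything hinges. Second, the ``crude'' estimate you dismiss is in fact a complete and rigorous repair: since $\inf_{u\in\Omega}\|u\|\le\inf_{u\in\Omega}\|u-g\|+\|g\|$, one has $\inf_{u\in\Omega}\|u\|^2\le 2\big(\inf_{u\in\Omega}\|u-g\|^2+\|g\|^2\big)\le 2(\eta_k^S)^2\|g_S(x_k;0)\|^2$ by \eqref{Def_stoppingconditions_2_rewrite}, which proves the lemma with the constant $\tfrac{4(\eta_k^S)^2L_S}{\theta(1-\theta)\lambda_S^3\gamma_S^2}$ in place of $\tfrac{2(\eta_k^S)^2L_S}{\theta(1-\theta)\lambda_S^3\gamma_S^2}$; as this constant only propagates into $\chi(\tilde\eta)$ in \eqref{chi_defs}, all downstream complexity results survive with adjusted constants. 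So your argument is sound up to a factor of two; the sharp constant in the statement is established neither by the structural claim in your last paragraph nor, as written, by the paper's own proof, and any genuine fix would have to exploit more than the geometry of $g$ and $\Omega$ at $t_k^S$ (for instance, working at the prox point $z-g$, where the explicit subgradient $(I-H_k^S)g$ of $Q_S$ is available).
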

\begin{proof}
  We have that
\begin{align}
  \label{bound_Q}
  Q(x_k; U_S t_k^S) - Q(x_k;0) = \left(Q(x_k; U_S t_k^S) - Q(x_k; U_S t_*^S)\right) + \left(Q(x_k; U_S t_*^S) - Q(x_k;0)\right).
  \end{align}
 The term $Q(x_k; U_S t_k^S) - Q(x_k; U_S t_*^S)$ can be bounded using strong convexity of $Q_S$ (see \cite[p.460]{bookboyd}). That is, for every $u \in \partial Q_S(x_k;t_k^S)$:
 \begin{align}
 Q(x_k; U_S t_k^S) - Q(x_k; U_S t_*^S)
 &\overset{\eqref{Eq_QvsQS}}{=} Q_S(x_k; t_k^S) - Q_S(x_k; t_*^S)
 \le \tfrac{1}{2\lambda_S}\|u \|^2\\
 \notag
 &\le \tfrac{1}{2\lambda_S}\|u + g_S(x_k;t^S_k) - g_S(x_k;t^S_k)\|^2 \\\notag
 							& \le \tfrac{1}{2\lambda_S}\Big(\|u - g_S(x_k;t^S_k)\|^2 + \|g_S(x_k;t^S_k)\|^2\Big).
 \end{align}
Therefore, using termination condition \eqref{Def_stoppingconditions_2}, where $\Omega$ is defined in \eqref{Omega}, we have that
 \begin{align}
 \notag
  Q(x_k; U_S t_k^S) - Q(x_k; U_S t_*^S) &\le \tfrac{1}{2\lambda_S}\Big(\inf_{u\in \Omega}\|u - g_S(x_k;t^S_k)\|^2+  \|g_S(x_k;t^S_k)\|^2\Big) \\\notag
    							&\le \tfrac{1}{2\lambda_S} (\eta_k^S \|g_S(x_k;0)\|)^2.
 \end{align}
Using Lemma \ref{lem:Fdec2} in the previous inequality, followed by \eqref{bound_Q}, gives the result.
\end{proof}

In the next Lemma we establish an upper bound on the difference between consecutive function values obtained using FCD.
 \begin{lemma}\label{lem:another_intermediate}
Let Assumptions \ref{Assume_Sampling}, \ref{Assump_HisPD}, \ref{Assump_fisLipschitz} and \ref{Assump_LambdaLrelation} hold. Let $\theta\in(0,1)$, fix $\tilde \eta \in [0,1)$, and let $t_*^S$ denote the exact minimizer of subproblem \eqref{eq_subproblem}. Given $x_k$, let $S$, $t^{S}_k\neq 0$ and $\alpha$ be generated by FCD (Algorithm~\ref{FCD}) with $\eta_k^S\in[0,\tilde \eta]$, and let $x_{k+1} = x_k + \alpha U_St_k^S $. Then
   \begin{equation}\label{Fub}
     F(x_{k+1}) - F(x_k) \overset{}{\leq} \chi(\tilde \eta)\big(Q(x_k; U_S t_*^S) - Q(x_k;0)\big),
   \end{equation}
  where
  \begin{equation}\label{chi_defs}
  \chi(\tilde \eta) = \min_{S \in 2^{[N]} \ : \ |S|=\tau}\; \frac{\theta(1-\theta)\lambda_S^3\gamma_S^2}{2L_S\Big(\tilde \eta^2 + \lambda_S\gamma_S^2(L_S - (1-\theta)\lambda_S)\Big)}.
  \end{equation}
  \end{lemma}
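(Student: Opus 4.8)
The inequality \eqref{Fub} is a lower bound on the per-iteration decrease in disguise: since $t_*^S$ minimises the model we have $Q(x_k;U_S t_*^S)-Q(x_k;0)\le 0$, and $\chi(\tilde\eta)>0$, so \eqref{Fub} is equivalent to $F(x_k)-F(x_{k+1})\ge \chi(\tilde\eta)\big(Q(x_k;0)-Q(x_k;U_S t_*^S)\big)$. The plan is to bound the exact model decrease $Q(x_k;0)-Q(x_k;U_S t_*^S)$ from above by a multiple of the realised decrease $F(x_k)-F(x_{k+1})$, and then to make the multiplier independent of the random draw by minimising over $S$. The first reduction is immediate from Lemma~\ref{lem:bound_Q}: rearranging it, and using $\eta_k^S\le\tilde\eta$ to replace $(\eta_k^S)^2$ by $\tilde\eta^2$, gives $Q(x_k;0)-Q(x_k;U_S t_*^S)\le \big(Q(x_k;0)-Q(x_k;U_S t_k^S)\big)+\tfrac{2\tilde\eta^2 L_S}{\theta(1-\theta)\lambda_S^3\gamma_S^2}\big(F(x_k)-F(x_{k+1})\big)$. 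This already produces the $\tilde\eta^2$ term in the denominator of \eqref{chi_defs}, so it remains to bound the \emph{inexact} model decrease $Q(x_k;0)-Q(x_k;U_S t_k^S)$ by $\tfrac{2L_S(L_S-(1-\theta)\lambda_S)}{\theta(1-\theta)\lambda_S^2}\big(F(x_k)-F(x_{k+1})\big)$.

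This last estimate is the crux, and it is where the factor $L_S-(1-\theta)\lambda_S$ is produced. I would not use the crude line-search bound, but instead the descent inequality \eqref{in:3} together with convexity \eqref{Eq_ell_convexity}, giving $F(x_k)-F(x_{k+1})\ge \alpha(\ell(x_k;0)-\ell(x_k;U_S t_k^S))-\tfrac{L_S}{2}\alpha^2\|t_k^S\|^2$ for the accepted step $\alpha$. Using the identity $\ell(x_k;0)-\ell(x_k;U_S t_k^S)=\big(Q(x_k;0)-Q(x_k;U_S t_k^S)\big)+\tfrac12\|t_k^S\|_{H_k^S}^2$, which follows from \eqref{eq:1} and \eqref{Eq_differenceloss}, and bounding $\|t_k^S\|_{H_k^S}^2\ge\lambda_S\|t_k^S\|^2$, I obtain after dividing by $\alpha$ that $Q(x_k;0)-Q(x_k;U_S t_k^S)\le \tfrac{1}{\alpha}\big(F(x_k)-F(x_{k+1})\big)+\tfrac12(L_S\alpha-\lambda_S)\|t_k^S\|^2$. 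The point of retaining the curvature term $\tfrac12\|t_k^S\|_{H_k^S}^2$ is precisely that it cancels the leading part of the quadratic penalty. I would then finish this term by three elementary bounds: $\tfrac1\alpha\le\tfrac1{\alpha_S}=\tfrac{2L_S}{(1-\theta)\lambda_S}$ from the step-size guarantee \eqref{Eq_alphamin}; $L_S\alpha-\lambda_S\le L_S-\lambda_S$ from $\alpha\le1$ and $\lambda_S\le L_S$ (the term being nonpositive, hence harmless, when $\alpha<\lambda_S/L_S$); and $\|t_k^S\|^2\le\tfrac{4L_S}{\theta(1-\theta)\lambda_S^2}\big(F(x_k)-F(x_{k+1})\big)$ from \eqref{Eq_F_ubont}. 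Collecting the two resulting multiples of $F(x_k)-F(x_{k+1})$ gives exactly $\tfrac{2L_S}{(1-\theta)\lambda_S}+\tfrac{2L_S(L_S-\lambda_S)}{\theta(1-\theta)\lambda_S^2}=\tfrac{2L_S(L_S-(1-\theta)\lambda_S)}{\theta(1-\theta)\lambda_S^2}$.

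Combining the two bounds gives $Q(x_k;0)-Q(x_k;U_S t_*^S)\le \tfrac{2L_S}{\theta(1-\theta)\lambda_S^3\gamma_S^2}\big(\tilde\eta^2+\lambda_S\gamma_S^2(L_S-(1-\theta)\lambda_S)\big)\big(F(x_k)-F(x_{k+1})\big)$, which inverts to the claimed inequality with the per-$S$ coefficient equal to the argument of the minimum in \eqref{chi_defs}. Since the subset $S$ is random, the final step is to pass to a constant independent of the draw: taking the minimum of this coefficient over all $S$ with $|S|=\tau$ only weakens the inequality, because $Q(x_k;U_S t_*^S)-Q(x_k;0)\le0$ and so replacing the coefficient by a smaller one preserves $F(x_{k+1})-F(x_k)\le(\cdot)\,(Q(x_k;U_S t_*^S)-Q(x_k;0))$, yielding the uniform $\chi(\tilde\eta)$ of \eqref{chi_defs}. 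The main obstacle is the second paragraph: a naive argument that discards the curvature term, or that exploits the line search only through $F(x_k)-F(x_{k+1})\ge\theta\alpha_S(\ell(x_k;0)-\ell(x_k;U_S t_k^S))$, produces a denominator $\tilde\eta^2+\lambda_S^2\gamma_S^2$ and is strictly weaker when $L_S<(2-\theta)\lambda_S$; obtaining the sharper $L_S-(1-\theta)\lambda_S$ requires routing through the smoothness inequality \eqref{in:3} and carefully tracking the sign of $L_S\alpha-\lambda_S$ so that the step-size bound \eqref{Eq_alphamin} and the decrease bound \eqref{Eq_F_ubont} combine into a single clean constant.
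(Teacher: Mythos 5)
Your proof is correct and is essentially the paper's argument run in reverse: the paper works forward from the Lipschitz bound \eqref{S2_upperbound}, convexity of $Q$ in $\alpha$, the rearranged line-search estimate \eqref{FvsH}, the bounds $\alpha \ge \alpha_S$ and $\alpha \le 1$ (the latter valid by $L_S \ge \lambda_S$), and Lemma~\ref{lem:bound_Q}, then solves the resulting implicit inequality for $F(x_{k+1})-F(x_k)$, producing exactly your per-$S$ coefficient. Your substitutions of convexity of $\ell$ \eqref{Eq_ell_convexity} for convexity of $Q$, and of \eqref{Eq_F_ubont} for the rearranged \eqref{FvsH}, are only cosmetic reorderings of the same ingredients and yield the identical constant $\frac{\theta(1-\theta)\lambda_S^3\gamma_S^2}{2L_S(\tilde\eta^2+\lambda_S\gamma_S^2(L_S-(1-\theta)\lambda_S))}$, so this is the same approach.
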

  \begin{proof}
  Using block Lipschitz continuity of $f$ \eqref{S2_upperbound}, and the definition of $Q$ \eqref{Def_Q} we get
 \begin{eqnarray*}
  \notag
    F(x_{k+1}) &\overset{}{\leq}& F(x_k) + (Q(x_k;\alpha U_S t_k^S) - Q(x_k;0))-  \tfrac{\alpha^2}2 \|t_k^S\|_{H_k^S}^2 + \tfrac{\alpha^2\Lip_S}{2} \|t_k^S\|^2\\
    \notag
    &\overset{\eqref{Assumption_lambdai}}{\leq}& F(x_k) + (Q(x_k;\alpha U_S t_k^S) - Q(x_k;0))+ \tfrac{\alpha^2}2 \left(\tfrac{\Lip_S}{\lambda_S}-1 \right)\|t_k^S\|_{H_k^S}^2.
  \end{eqnarray*}
Using convexity of $Q$ with respect to $\alpha$ gives
  \begin{align}\label{eq:345453}
  \notag
  F(x_{k+1}) &\le F(x_k) + \alpha(Q(x_k; U_S t_k^S) - Q(x_k;0))+ \tfrac{\alpha^2}2 \left(\tfrac{\Lip_S}{\lambda_S}-1 \right)\|t_k^S\|_{H_k^S}^2\\
  &\le F(x_k) + \alpha_{S}(Q(x_k; U_S t_k^S) - Q(x_k;0))+ \tfrac{\alpha^2}2 \left(\tfrac{\Lip_S}{\lambda_S}-1 \right)\|t_k^S\|_{H_k^S}^2,
  \end{align}
  where the second inequality holds because $Q(x_k; U_S t_k^S) - Q(x_k;0) < 0$ \eqref{Def_stoppingconditions_1}, and $\alpha \ge \alpha_S$ \eqref{Eq_alphamin}.
  By rearranging \eqref{FvsH} we get $\frac{\alpha}{2}\|t_k^S\|_{H_k^S}^2 \le \frac{1}{\theta}(F(x_k) - F(x_{k+1})).$  Using this in \eqref{eq:345453} gives
$$
  F(x_{k+1})  \overset{}{\leq} F(x_k) + \alpha_S(Q(x_k; U_S t_k^S) - Q(x_k;0)) + \tfrac{\alpha}{\theta}\left(\tfrac{\Lip_S}{\lambda_S}-1\right)(F(x_k) - F(x_{k+1}))
  $$
  By Assumption~\ref{Assump_LambdaLrelation}, $L_S \ge \lambda_S$, so we can replace $\alpha \in (0,1]$ with $1$ to get
  \begin{eqnarray}\label{FvsQF}
    F(x_{k+1})  \overset{}{\leq} F(x_k) + \alpha_S(Q(x_k; U_S t_k^S) - Q(x_k;0)) + \tfrac{1}{\theta}\left(\tfrac{\Lip_S}{\lambda_S}-1\right)(F(x_k) - F(x_{k+1})).
  \end{eqnarray}
  Using Lemma \ref{lem:bound_Q} and the definition of $\tilde \eta$ we get
  \begin{align*}\label{Rearrange}
  \notag
  F(x_{k+1})  & \overset{}{\leq} F(x_k) + \alpha_S(Q(x_k; U_S t_*^S) - Q(x_k;0))
  		   +  \tfrac{1}{\theta}\left(\tfrac{2\alpha_S\tilde{\eta}^2L_S}{(1-\theta)\lambda_S^3\gamma_S^2} +\tfrac{\Lip_S}{\lambda_S}-1\right)(F(x_k) - F(x_{k+1}))\\
  & \overset{\eqref{Eq_alphamin}}{\leq}F(x_k) + \alpha_S(Q(x_k; U_S t_*^S) - Q(x_k;0))
  		   + \tfrac{1}{\theta} \left(\tfrac{\tilde{\eta}^2}{\lambda_S^2\gamma_S^2} + \tfrac{\Lip_S}{\lambda_S}-1\right)(F(x_k) - F(x_{k+1})).
  \end{align*}
  Rearranging the previous, using the definitions of $\chi(\tilde \eta)$ and $\alpha_S$, gives the result.
  \end{proof}

\begin{remark}
  Notice that when exact directions are used in FCD (i.e., $\tilde \eta = 0$), we have
  \begin{equation}\label{eq_chi_simplified_exact}
    \chi(0) = \min_{S \in 2^{[N]} \ : \ |S|=\tau}\; \frac{\theta(1-\theta)\lambda_S^2}{2L_S^2 - 2(1-\theta)\lambda_SL_S}
  \end{equation}
  i.e., in general $\chi(\tilde \eta)$ depends cubically on $\lambda_S$ \eqref{chi_defs}, but in the case of exact directions, $\chi(0)$ only depends upon $\lambda_S^2$. However, if $L_S = \lambda_S$, but inexact directions are used ($\tilde \eta \neq 0$), then
  \begin{equation}\label{eq_chi_simplified_inexact}
    \chi(\tilde \eta) = \min_{S \in 2^{[N]} \ : \ |S|=\tau}\; \frac{1-\theta}{2}\frac{\theta\lambda_S^2\gamma_S^2}{\tilde \eta^2 + \theta\lambda_S^2\gamma_S^2},
  \end{equation}
  which again shows a dependence upon $\lambda_S^2$. Moreover, if $L_S = \lambda_S$, then $\chi(0) = (1-\theta)/2$ (i.e., constant). This is summarized in the following table.
  \begin{table}[H]\centering
    \begin{tabular}{|c | c| c|}
    \hline
    & Inexact Directions ($\tilde \eta\neq 0$) & Exact Directions ($\tilde \eta= 0$)\\
    \hline
      $\lambda_S \neq L_S$ & cubic & squared\\
      \hline
      $\lambda_S = L_S$ & squared & constant\\
      \hline
    \end{tabular}
    \caption{Dependence of $\chi(\tilde \eta)$ \eqref{chi_defs} upon $\lambda_S$ (Assumption~\ref{Assump_HisPD}).}
    \label{Table_chi}
  \end{table}

\end{remark}
The final result in this section gives an upper bound on the expected distance between the current function value and the optimal value.
\begin{lemma}\label{Lemma_fandr}
Let Assumptions \ref{Assume_Sampling}, \ref{Assump_HisPD}, \ref{Assump_fisLipschitz} and \ref{Assump_LambdaLrelation} hold. Let $\theta\in(0,1)$, fix $\tilde \eta \in [0,1)$, and let $\mu_f \geq 0$ be the strong convexity constant of $f$. Given $x_k$, let $S$, $t^{S}_k\neq 0$ and $\alpha$ be generated by FCD (Algorithm~\ref{FCD}) with $\eta_k^S\in[0,\tilde \eta]$, and let $x_{k+1} = x_k + \alpha U_St_k^S $. Then
\begin{equation}\label{Lemma18}
     \E[F(x_{k+1})- F^*|x_k]
     \leq \left(1- \tfrac{\tau\chi(\tilde \eta)\zeta}{N}\right)(F(x_k) - F^* ) - \left(\mu_f - \Lmax\zeta \right)\tfrac{\tau\chi(\tilde \eta)\zeta}{2N} \|x_k - x_*\|^2,
  \end{equation}
  where $\zeta \in [0,1]$, $\chi(\tilde \eta)$ is defined in \eqref{chi_defs} and
  \begin{equation}\label{chi_lambda_max}
   \Lmax \eqdef \max_{S \in 2^{[N]} \ : \ |S|=\tau} \Lambda_S.
  \end{equation}
\end{lemma}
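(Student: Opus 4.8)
The plan is to start from the per-iteration bound already established in Lemma~\ref{lem:another_intermediate}, namely \eqref{Fub}, and to convert it into a bound on $\E[F(x_{k+1})-F^*\mid x_k]$ by choosing a convenient comparison point for the exact subproblem minimizer and then averaging over the random sampling of $S$. The crucial observation is that $\chi(\tilde\eta)$ defined in \eqref{chi_defs} is a deterministic constant (a minimum over all admissible $S$), so it may be pulled outside the conditional expectation. Since $t_*^S$ is the exact minimizer of $Q_S(x_k;\cdot)$, for \emph{any} feasible direction $t^S$ we have $Q_S(x_k;t_*^S)\le Q_S(x_k;t^S)$; I would exploit this freedom by taking the specific test direction $t^S=\zeta\,(x_*^S-x_k^S)$ for the free parameter $\zeta\in[0,1]$, where $x_*$ is a fixed optimal point. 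Combining \eqref{Fub} with \eqref{Eq_QvsQS} this yields, for every realization of $S$,
$$
F(x_{k+1})-F(x_k)\;\le\;\chi(\tilde\eta)\big(Q_S(x_k;\zeta(x_*^S-x_k^S))-Q_S(x_k;0)\big).
$$

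Next I would expand the right-hand side using the definition \eqref{Def_Qi} of $Q_S$. The linear part contributes $\zeta\langle\nabla_S f(x_k),x_*^S-x_k^S\rangle$, the quadratic part $\tfrac{\zeta^2}{2}\|x_*^S-x_k^S\|_{H_k^S}^2$, and the nonsmooth part $\Psi_S(x_k^S+\zeta(x_*^S-x_k^S))-\Psi_S(x_k^S)$. Convexity of $\Psi_S$ (applied coordinatewise through \eqref{PsiS}) bounds the last term by $\zeta(\Psi_S(x_*^S)-\Psi_S(x_k^S))$, while Assumption~\ref{Assump_HisPD} gives $\|x_*^S-x_k^S\|_{H_k^S}^2\le\Lambda_S\|x_*^S-x_k^S\|^2\le\Lmax\|x_*^S-x_k^S\|^2$ with $\Lmax$ as in \eqref{chi_lambda_max}. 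After these substitutions every term is a sum of coordinate contributions restricted to the index set $S$, so the $\tau$-nice sampling identity \eqref{eq:tech_prob} applies and each such sum averages to $\tfrac{\tau}{N}$ times the corresponding full sum. This replaces the linear term by $\tfrac{\tau}{N}\langle\nabla f(x_k),x_*-x_k\rangle$, the block quadratic term by $\tfrac{\tau}{N}\Lmax\|x_*-x_k\|^2$, and the $\Psi_S$ difference by $\tfrac{\tau}{N}(\Psi(x_*)-\Psi(x_k))$, allowing me to factor out $\tfrac{\tau\chi(\tilde\eta)\zeta}{N}$.

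Finally I would invoke strong convexity of $f$. From \eqref{strongly_convex_1} with $\mu_f\ge0$, applied at $x_k$ and $x_*$, one has $\langle\nabla f(x_k),x_*-x_k\rangle\le f(x_*)-f(x_k)-\tfrac{\mu_f}{2}\|x_*-x_k\|^2$; adding the $\Psi$ difference recombines $f$ and $\Psi$ into $F(x_*)-F(x_k)=F^*-F(x_k)$ and produces the $-\tfrac{\mu_f}{2}\|x_*-x_k\|^2$ contribution. Collecting the two squared-distance terms gives the coefficient $\tfrac12(\Lmax\zeta-\mu_f)$, and adding $F(x_k)-F^*$ to pass from $F(x_{k+1})-F(x_k)$ to $F(x_{k+1})-F^*$ yields exactly \eqref{Lemma18}. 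I expect the main delicacy to lie in the averaging step: one must first reduce \emph{all} three pieces of $Q_S$ --- in particular the ellipsoidal quadratic term, whose matrix $H_k^S$ depends on $S$ --- to genuine coordinate-separable sums before \eqref{eq:tech_prob} can legitimately be applied, and the uniform bound $\Lambda_S\le\Lmax$ is precisely what makes this reduction valid independently of $S$. The choice of the test direction $\zeta(x_*^S-x_k^S)$ and the coordinatewise use of convexity of $\Psi$ are the other points requiring care.
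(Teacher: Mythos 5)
Your proposal is correct and follows essentially the same route as the paper's own proof: starting from Lemma~\ref{lem:another_intermediate}, using the minimality of $t_*^S$ with the test direction $\zeta(x_*^S-x_k^S)$, bounding via convexity of $\Psi$ and $\Lambda_S\le\Lmax$, averaging with \eqref{eq:tech_prob}, and finishing with strong convexity of $f$. Your attention to reducing all three pieces of $Q_S$ (including the $H_k^S$-weighted quadratic term) to coordinate-separable sums before invoking the sampling identity is exactly the step the paper handles via \eqref{Assumption_lambdai} and \eqref{chi_lambda_max}.
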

\begin{proof}
The assumptions of Lemma \ref{lem:another_intermediate} are satisfied, so \eqref{Fub} holds.
  Notice that $t_*^S$ is the vector that makes the difference $Q(x_k; U_S t_*^S) - Q(x_k;0)$, as small/negative as possible
  over the subspace defined by the columns of $U_S$. Therefore, for any vector $y_k^S \neq t_*^S$,
 $
    Q(x_k; U_S t_*^S) - Q(x_k;0) \leq Q(x_k; U_S y_k^S) - Q(x_k;0).
 $
  Choosing $y_k ^S = \zeta(x_*^S - x_k^S)$ for any $\zeta \in [0,1]$ gives
   \begin{eqnarray*}
    F(x_{k+1}) - F(x_k) &\overset{}{\leq}& \chi(\tilde \eta)(Q(x_k; \zeta U_S (x_*^S - x_k^S)) - Q(x_k;0)) \\
    &\overset{\eqref{Def_Q}}{\leq}& \chi(\tilde \eta)\Big(\zeta \langle \nabla f(x_k),U_S(x_*^S - x_k^S)\rangle  + \frac{\zeta^2}{2}\|x_*^S - x_k^S\|_{H_k^S}^2 \\
    &+& \Psi_S(x_k^S + \zeta (x_*^S - x_k^S)) - \Psi_S(x_k^S)\Big).
    \end{eqnarray*}
    Using convexity of $\Psi$, \eqref{Assumption_lambdai} and the definition $\Lmax$ \eqref{chi_lambda_max} we get
    \begin{align}\label{Fdiffupper}
     F(x_{k+1}) - F(x_k) & \leq \chi(\tilde \eta)\zeta \Big(\langle \nabla f(x_k),U_S(x_*^S - x_k^S)\rangle  + \Psi_S(x_*^S) - \Psi_S(x_k^S) \Big) \\
                 \notag  &+ \tfrac{\chi(\tilde \eta)\Lmax\zeta^2}{2}\|x_*^S - x_k^S\|^2.
  \end{align}
  Taking expectation of \eqref{Fdiffupper} and using convexity of $f$ gives
  \begin{eqnarray*}
     \E[F(x_{k+1}) - F(x_k)|x_k] &\overset{\eqref{eq:tech_prob}}{\leq}&\tfrac{\tau\chi(\tilde \eta)\zeta}{N} \Big( \langle \nabla f(x_k),x_* - x_k \rangle  + \Psi(x_*) - \Psi(x_k) \Big) \\
     &+& \tfrac{\tau \chi\Lmax}{N}\tfrac{\zeta^2}{2}\|x_k - x_*\|^2\\
     &\overset{}{\leq}& - \tfrac{\tau\chi(\tilde \eta)\zeta}{N}\left(F(x_k) - F^* \right) - \tfrac{\tau\chi(\tilde \eta)\zeta}{N} \tfrac{\mu_f}{2}\|x_k - x_*\|_{2}^2\\
     &+& \tfrac{\tau \chi(\tilde \eta)\Lmax}{N}\tfrac{\zeta^2}{2}\|x_k - x_*\|^2.
  \end{eqnarray*}
  Rearranging and subtracting $F^*$ from both sides gives the result.
  \end{proof}

\section{Iteration complexity results for inexact FCD}\label{S_Complexity}
In this section we present iteration complexity results for FCD applied to problem \eqref{Def_F} when an inexact update is used.

\subsection{Iteration complexity in the convex case}\label{S_Complexity_C}
Here we present iteration complexity results for FCD in the convex case.

\begin{theorem}\label{ItcomplexCInexact}
Let $F$ be the convex function defined in \eqref{Def_F} and let Assumptions \ref{Assume_Sampling}, \ref{Assump_HisPD}, \ref{Assump_fisLipschitz} and \ref{Assump_LambdaLrelation} hold. Choose an initial point $x_0\in \R^N$, a target confidence $\rho \in (0,1)$, fix $\tilde{\eta} \in [0,1)$, and recall that $\chi(\tilde \eta)$ is defined in \eqref{chi_defs}. Let the target accuracy $\epsilon$ and iteration counter $K$ be chosen in either of the following two ways:
\begin{itemize}
  \item[(i)] Let $m_1\eqdef \max\{\mathcal{R}^2(x_0),F(x_0) - F^* \}$, $0<\epsilon < F(x_0) - F^*$ and
    \begin{equation}
        K \geq \frac{2 N }{\tau \chi(\tilde \eta)}\frac{m_1}{\epsilon}\left(1 + \ln \frac{1}{\rho}\right) +2 - \frac{2 N}{\tau \chi(\tilde \eta)}\frac{m_1}{(F(x_0) - F^*)},
    \end{equation}
    \item[(ii)] Let $0<\epsilon < \min \{\mathcal{R}^2(x_0),F(x_0) - F^*\}$ and
    \begin{equation}
    K\geq \frac{2 N }{\tau \chi(\tilde \eta)}\frac{\mathcal{R}^2(x_0)}{\epsilon} \ln\frac{F(x_0) - F^*}{\epsilon \rho}.
  \end{equation}
\end{itemize}
If $\{x_k\}_{k\geq 0}$ are the random points generated by FCD (Algorithm~\ref{FCD}), with $\eta_k^S \in [0,\tilde{\eta}]$ for all $k$ and all $S\in2^{[N]}$, then $\Prob(F(x_K) - F^* \leq \epsilon) \geq 1 - \rho$.
\end{theorem}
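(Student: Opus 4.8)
The plan is to derive a one-step expected-contraction inequality for $\xi_k := F(x_k) - F^*$ and then feed it into the master recursion of Theorem~\ref{PeterThm} (used, as the paper indicates, with exactly this choice of $\xi_k$). The starting point is Lemma~\ref{Lemma_fandr}, specialized to the convex case $\mu_f = 0$. Setting $\mu_f = 0$ flips the sign of the last term, so the bound reads $\E[F(x_{k+1}) - F^* \mid x_k] \le \bigl(1 - \tfrac{\tau\chi(\tilde\eta)\zeta}{N}\bigr)(F(x_k)-F^*) + \tfrac{\tau\chi(\tilde\eta)\Lmax\zeta^2}{2N}\|x_k - x_*\|^2$, valid for every $\zeta \in [0,1]$, with $\Lmax$ as in \eqref{chi_lambda_max}. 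The free parameter $\zeta$ is what I will tune to land in one of the two hypotheses of Theorem~\ref{PeterThm}.

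Next I would eliminate the uncontrolled distance $\|x_k - x_*\|$. Lemma~\ref{lem:Fdec} guarantees a strict decrease $F(x_{k+1}) < F(x_k)$ whenever $t_k^S \neq 0$, so along every sample path $F(x_k) \le F(x_0)$ for all $k$; by the definition of the level-set radius \eqref{Def_Rlevelset} this yields $\|x_k - x_*\| \le \mathcal{R}(x_0)$. Substituting this bound turns the inequality into a recursion of the clean form $\E[\xi_{k+1}\mid\xi_k] \le (1-a\zeta)\xi_k + b\zeta^2$, where $a = \tfrac{\tau\chi(\tilde\eta)}{N}$ and $b$ is the explicit positive constant built from $\tau$, $N$, $\chi(\tilde\eta)$, $\Lmax$ and $\mathcal{R}^2(x_0)$, and $\zeta \in [0,1]$ remains at our disposal.

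The core of the proof, and the step I expect to be the main obstacle, is the optimization over $\zeta$ subject to the constraint $\zeta \le 1$, which forces a case split. The unconstrained minimizer of the right-hand side is $\zeta^\star = a\xi_k/(2b)$; when it is feasible (small $\xi_k$) the recursion collapses to $\E[\xi_{k+1}\mid\xi_k] \le \xi_k - \xi_k^2/c_1$, which is precisely hypothesis (i) of Theorem~\ref{PeterThm}. When $\zeta^\star > 1$ (large $\xi_k$) I would instead take the boundary value $\zeta = 1$ and bound the quadratic term using $\xi_k \le \xi_0$; reconciling the two regimes into a single admissible constant $c_1$ is exactly what produces the quantity $m_1 = \max\{\mathcal{R}^2(x_0),\,F(x_0)-F^*\}$ appearing in part~(i). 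For part~(ii) I would instead fix an $\epsilon$-dependent choice $\zeta \propto \epsilon/\mathcal{R}^2(x_0)$: on the event $\{\xi_k \ge \epsilon\}$ the quadratic term $b\zeta^2$ is then dominated by a fixed fraction of the linear term $a\zeta\xi_k$, giving the geometric contraction $\E[\xi_{k+1}\mid\xi_k] \le (1 - \tfrac{1}{c_2})\xi_k$, i.e.\ hypothesis (ii). Care is needed throughout to keep the selected $\zeta$ inside $[0,1]$ and to track the constants so that $c_1$ and $c_2$ match the explicit factors $\tfrac{2N}{\tau\chi(\tilde\eta)}$ stated in the theorem.

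Finally I would verify the structural hypotheses of Theorem~\ref{PeterThm}: $\{\xi_k\}$ is nonnegative since $F^*$ is optimal, nonincreasing by the monotonicity of Lemma~\ref{lem:Fdec}, and $\xi_{k+1}$ depends on $\xi_k$ in the required (Markov) sense. With property~(i) established and the constant identified, the iteration count of Theorem~\ref{PeterThm}(i) gives the bound in (i); with property~(ii) and its constant, the count of Theorem~\ref{PeterThm}(ii) gives the bound in (ii). In both cases one checks the admissibility condition on $\epsilon$ (that $\epsilon < c_1$, respectively $\epsilon$ below the stated minimum), and concludes $\Prob(F(x_K)-F^* \le \epsilon) \ge 1-\rho$. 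The only genuinely delicate points are the $\zeta$-optimization with its feasibility split (the source of the $\max$ in $m_1$) and confirming that the level-set estimate $\|x_k - x_*\| \le \mathcal{R}(x_0)$ holds for all $k$ along every realization, which is what licenses replacing the random distance by a deterministic constant before taking expectations.
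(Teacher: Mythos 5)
Your proposal follows essentially the same route as the paper's proof: you invoke Lemma~\ref{Lemma_fandr} with $\mu_f=0$, use the monotone decrease from Lemma~\ref{lem:Fdec} to replace $\|x_k-x_*\|$ by the deterministic bound $\mathcal{R}(x_0)$, optimize the resulting recursion over $\zeta\in[0,1]$ with exactly the feasibility case split that produces the constant $m_1=\max\{\mathcal{R}^2(x_0),F(x_0)-F^*\}$, and finish both parts by Theorem~\ref{PeterThm}(i)--(ii). The only (immaterial) variation is in part~(ii), where you fix $\zeta\propto\epsilon/\mathcal{R}^2(x_0)$ a priori rather than, as the paper does, substituting $\xi_k\geq\epsilon$ into the already-optimized bound \eqref{Eq_overapprox}; both yield the same contraction constant $c_2$.
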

\begin{proof}
By Lemma~\ref{lem:Fdec} we have that $F(x_k) \leq F(x_0)$ for all $k$, so $\|x_k - x_*\| \leq \mathcal{R}(x_0)$ for all $x_* \in X^*$, where $\mathcal{R}(x_0)$ is defined in \eqref{Def_Rlevelset}.
Then, using Lemma~\ref{Lemma_fandr} with $\mu_f = 0$ gives
\begin{eqnarray}\label{Eq_findzeta}
     \E[F(x_{k+1}) - F^*|x_k]
     \leq
     \left(1- \tfrac{\tau\chi(\tilde \eta)\zeta}{N}\right)\left(F(x_k) - F^* \right) + \tfrac{\tau\chi(\tilde \eta)\Lmax\zeta^2}{2N} \mathcal{R}^2(x_0).
  \end{eqnarray}
Minimizing the right hand side of the previous with respect to $\zeta$ gives:
$
    \zeta = \min \left\{\frac{F(x_k) - F^*}{\Lmax\mathcal{R}^2(x_0)},1\right\}.
$
Then, \eqref{Eq_findzeta} becomes
\begin{eqnarray*}
\E[F(x_{k+1}) - F^*|x_k]
     \leq
  \begin{cases}
    \left(1- \tfrac{\tau\chi(\tilde \eta)}{2N}\tfrac{F(x_k) - F^*}{\Lmax\mathcal{R}^2(x_0)}\right) (F(x_k) - F^*) & \text{if}\;\; \tfrac{F(x_k)-F^*}{\Lmax\mathcal{R}^2(x_0)} \leq 1\\
    \left(1- \frac{\tau\chi(\tilde \eta)}{2N}\right)(F(x_k) - F^*) & \text{otherwise},
  \end{cases}
\end{eqnarray*}
or alternatively,
\begin{eqnarray}\label{Eq_overapprox}
  \E[F(x_{k+1})-F^*|x_k]
     \leq \left(1 - \tfrac{\tau\chi(\tilde \eta)}{2N}\min\left\{\tfrac{F(x_k) - F^* }{\Lmax\mathcal{R}^2(x_0)}, 1\right\}\right)(F(x_k) - F^* ).
\end{eqnarray}
It is enough to study \eqref{Eq_overapprox} under the condition $F(x_k) - F^* \leq \Lmax\mathcal{R}^2(x_0)$.\footnote{Notice that, at any particular iteration of FCD, either of the two options inside the `min' in \eqref{Eq_overapprox} could be the smallest. However, $\Lmax\mathcal{R}^2(x_0)$ is fixed throughout the iterations, and $F(x_k) - F^*$ is becoming smaller as the iterations progress by Lemma~\ref{lem:Fdec}. Therefore, eventually it will be the case that the first term inside the `min' in \eqref{Eq_overapprox} will be the smallest, i.e., $F(x_k) - F^* \leq \Lmax\mathcal{R}^2(x_0)$.} Hence, we have the following two cases.
\begin{itemize}
  \item[(i)]
      Letting $c_1 = \frac{2 N}{\tau \chi(\tilde \eta)}\max\{\Lmax\mathcal{R}^2(x_0),F(x_0) - F^* \}$, \eqref{Eq_overapprox} becomes
\begin{eqnarray*}
  \E[F(x_{k+1})-F^*|x_k]&\leq&\left(1 - \tfrac{\tau\chi(\tilde \eta)}{2N}\tfrac{F(x_k) - F^* }{\Lmax\mathcal{R}^2(x_0)}\right)(F(x_k) - F^* )\\
     &\leq& \left(1 - \tfrac{F(x_k) - F^* }{c_1}\right)(F(x_k) - F^* ).
\end{eqnarray*}
Notice that for this choice of $c_1$, $\epsilon < F(x_0)-F^* < c_1$, so it suffices to apply Theorem~\ref{PeterThm}(i) and the result follows.
\item[(ii)]
Now we choose $c_2 = \frac{2 N }{\tau\chi(\tilde \eta)}\frac{\mathcal{R}^2(x_0)}{\epsilon}> 1$. Observe that, whenever $\epsilon \leq F(x_k) - F^* $, by \eqref{Eq_overapprox} we have
$
       \E[F(x_{k+1})-F^*|x_k]
     \leq (1 - \tfrac{1}{c_2})(F(x_k) - F^* ).
$
    It remains to apply Theorem~\ref{PeterThm}(ii), and the result follows.
\end{itemize}
\end{proof}

\subsection{Iteration complexity in the strongly convex case}

In this section we establish iteration complexity results for FCD when the objective function $F$, and the smooth function $f$, are strongly convex, with strong convexity parameters $\mu_f>0$ and $\mu_F>0$, respectively. Moreover, $\mu_f \leq \mu_F$.

Before presenting our iteration complexity results, we make the following assumption. It explains that at least one of the chosen matrices $\{H_k^S\}_{k\geq 0}$ has an eigenvalue greater than $\mu_f$.
\begin{assumption}\label{Assume_Lambdamuf}
  We assume that $\Lambda_{\max}\geq \mu_f> 0 $, where $\Lambda_{\max}$ is defined in \eqref{chi_lambda_max}.
\end{assumption}

We also define the following variable, which appears in the complexity results:
\begin{equation}\label{delta}
  \delta \eqdef
   \begin{cases}
     \frac{\mu_f + \mu_F}{4 \Lambda_{\max}}\Big(1 + \frac{\mu_f}{\mu_F}\Big) & \text{if}\;\; \mu_F + \mu_f < 2\Lambda_{\max}\\
     1 - \frac{\Lambda_{\max}-\mu_f}{\mu_F} & \text{otherwise}.
   \end{cases}
\end{equation}
It is straightforward to show that $\delta \in (0,1]$. In particular, if $\mu_F + \mu_f < 2\Lambda_{\max}$, the conclusion follows because $\mu_F \geq \mu_f \Rightarrow 1+\frac{\mu_f}{\mu_F} \leq 2$. On the other hand, if $ 2\Lambda_{\max} \leq \mu_F+\mu_f$, then by inspection, we have $(\mu_{F}+\mu_f-\Lambda_{\max})/\mu_F \in (0,1]$.


\begin{theorem}\label{ItcomplexSCInexact}
  Let $f$ and $F$ be the strongly convex functions defined in \eqref{Def_F} with $\mu_f >0$ and $\mu_F >0$ respectively, and let Assumptions \ref{Assume_Sampling}, \ref{Assump_HisPD}, \ref{Assump_fisLipschitz}, \ref{Assump_LambdaLrelation} and \ref{Assume_Lambdamuf} hold.
 Choose an initial point $x_0\in \R^N$, a target confidence $\rho \in (0,1)$, a target accuracy $\epsilon >0$, and fix $\tilde{\eta} \in [0,1)$. Let $\chi(\tilde \eta)$ and $\delta$ be defined in \eqref{chi_defs} and \eqref{delta} respectively, and let
  \begin{equation}
    K \geq \frac{N}{\tau \chi(\tilde \eta) \delta} \ln \left(\frac{F(x_0)-F^*}{\epsilon \rho}\right).
  \end{equation}
If $\{x_k\}_{k\geq 0}$ are the random points generated by FCD (Algorithm~\ref{FCD}), with $\eta_k^S \in [0,\tilde{\eta}]$ for all $k$ and all $S\in2^{[N]}$, then $\Prob(F(x_K) - F^* \leq \epsilon) \geq 1 - \rho$.
\end{theorem}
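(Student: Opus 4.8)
The plan is to invoke Theorem~\ref{PeterThm}(ii) with $\xi_k = F(x_k) - F^*$ and $c_2 = \tfrac{N}{\tau\chi(\tilde\eta)\delta}$, so the entire proof reduces to establishing the one-step contraction
\begin{equation*}
\E[F(x_{k+1}) - F^*\mid x_k] \leq \Big(1 - \tfrac{\tau\chi(\tilde\eta)\delta}{N}\Big)\big(F(x_k) - F^*\big).
\end{equation*}
Everything needed for this is already assembled in Lemma~\ref{Lemma_fandr}: the bound \eqref{Lemma18} still carries the free parameter $\zeta\in[0,1]$ together with the troublesome term $-(\mu_f - \Lmax\zeta)\tfrac{\tau\chi(\tilde\eta)\zeta}{2N}\|x_k - x_*\|^2$. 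The task is therefore to eliminate the dependence on $\|x_k - x_*\|^2$ and then to choose $\zeta$ optimally.

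First I would use strong convexity of $F$ with parameter $\mu_F$: since $x_*$ minimizes $F$, applying \eqref{strongly_convex_1} at $x = x_*$ gives $F(x_k) - F^* \geq \tfrac{\mu_F}{2}\|x_k - x_*\|^2$, i.e.\ $\|x_k - x_*\|^2 \leq \tfrac{2}{\mu_F}(F(x_k)-F^*)$. This substitution into \eqref{Lemma18} is a valid upper bound only when the coefficient $-(\mu_f - \Lmax\zeta)$ is nonnegative, that is when $\Lmax\zeta \geq \mu_f$; I will ensure the eventual choice of $\zeta$ respects this. Carrying out the substitution produces
\begin{equation*}
\E[F(x_{k+1}) - F^*\mid x_k] \leq \Big(1 - \tfrac{\tau\chi(\tilde\eta)}{N}\,h(\zeta)\Big)\big(F(x_k) - F^*\big), \qquad h(\zeta) \eqdef \zeta\Big(1 - \tfrac{\Lmax\zeta - \mu_f}{\mu_F}\Big).
\end{equation*}

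The heart of the argument is then to maximize the concave quadratic $h(\zeta) = \zeta\big(1 + \tfrac{\mu_f}{\mu_F}\big) - \tfrac{\Lmax}{\mu_F}\zeta^2$ over $\zeta\in[0,1]$. Its unconstrained maximizer is $\zeta^\star = \tfrac{\mu_f+\mu_F}{2\Lmax}$, and the two branches in the definition \eqref{delta} of $\delta$ correspond precisely to whether $\zeta^\star$ lies inside $[0,1]$: if $\mu_F + \mu_f < 2\Lmax$ then $\zeta^\star < 1$ and $h(\zeta^\star) = \tfrac{\mu_f+\mu_F}{4\Lmax}\big(1+\tfrac{\mu_f}{\mu_F}\big) = \delta$; otherwise $h$ is increasing on $[0,1]$, so the maximum is attained at $\zeta = 1$ with $h(1) = 1 - \tfrac{\Lmax-\mu_f}{\mu_F} = \delta$. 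At either maximizer one checks $\Lmax\zeta \geq \mu_f$ --- in the first case because $\Lmax\zeta^\star = \tfrac{\mu_f+\mu_F}{2} \geq \mu_f$ using $\mu_f \leq \mu_F$, and in the second because $\Lmax \geq \mu_f$ by Assumption~\ref{Assume_Lambdamuf} --- so the strong-convexity substitution above was indeed legitimate. This yields the claimed contraction with rate $\tfrac{\tau\chi(\tilde\eta)\delta}{N}$, and since $\delta\in(0,1]$ gives $c_2 = \tfrac{N}{\tau\chi(\tilde\eta)\delta} > 1$ (just as in the convex case), Theorem~\ref{PeterThm}(ii) with $K \geq c_2\ln\tfrac{F(x_0)-F^*}{\epsilon\rho}$ completes the proof.

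I expect the main obstacle to be bookkeeping the sign of $(\mu_f - \Lmax\zeta)$: the strong-convexity bound on $\|x_k-x_*\|^2$ may be inserted into \eqref{Lemma18} only once this term has become a genuine penalty (nonnegative coefficient), and it is exactly Assumption~\ref{Assume_Lambdamuf} together with $\mu_f\leq\mu_F$ that guarantees the optimal $\zeta$ lives in that regime. The remaining work --- the one-variable concave maximization and matching $h(\zeta^\star)$ to the two cases of \eqref{delta} --- is purely algebraic.
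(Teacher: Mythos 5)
Your proposal is correct and follows essentially the same route as the paper: both apply Lemma~\ref{Lemma_fandr}, eliminate $\|x_k-x_*\|^2$ via strong convexity of $F$ (valid only when $\Lmax\zeta \geq \mu_f$, which the paper also flags), optimize over $\zeta$ to get $\zeta = \min\{(\mu_f+\mu_F)/2\Lmax,\,1\}$ matching the two branches of $\delta$ in \eqref{delta}, and finish with Theorem~\ref{PeterThm}(ii) and $c_2 = N/(\tau\chi(\tilde\eta)\delta)$. Your explicit verification that $\Lmax\zeta\geq\mu_f$ holds at each maximizer (using $\mu_f\leq\mu_F$ and Assumption~\ref{Assume_Lambdamuf}) is in fact slightly more careful than the paper's presentation.
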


\begin{proof}
  Define $\xi_k\eqdef F(x_k)-F^*$. From Lemma~\ref{Lemma_fandr} we have that
  \begin{eqnarray}\label{Eq_zetaSC}
  \notag
     \E[\xi_{k+1}|x_k]
     &\overset{}{\leq}& \left(1- \tfrac{\tau\chi(\tilde \eta)\zeta}{N} \right)\xi_k + \left(\Lmax\zeta -\mu_f \right)\tfrac{\tau\chi(\tilde \eta)\zeta}{2N} \|x_k - x_*\|^2\\
     &\overset{\eqref{strongly_convex_1}}{\leq}& \left(1- \tfrac{\tau\chi(\tilde \eta)\zeta}{N} \right)\xi_k +\left(\Lmax\zeta -\mu_f \right) \tfrac{\tau\chi(\tilde \eta)\zeta}{N\mu_F}\xi_k.
  \end{eqnarray}
  Notice that \eqref{strongly_convex_1} can only be applied in \eqref{Eq_zetaSC} when $\mu_f \leq \Lmax\zeta$. But, differentiating the right hand side of \eqref{Eq_zetaSC} with respect to $\zeta$ gives $\zeta = \min\{(\mu_F + \mu_f)/2\Lmax,1\}$.\footnote{We could simply have set $\zeta = 1$ initially, so that $\mu_f \leq \Lmax\zeta$ is satisfied by Assumption~\ref{Assume_Lambdamuf}. However, we obtain a better complexity result by taking a smaller $\zeta$.} Then
  \begin{eqnarray*}
\E[\xi_{k+1}|x_k]
     \leq
  \begin{cases}
    \left(1- \frac{\tau\chi(\tilde \eta)}{N}\tfrac{(\mu_F + \mu_f)}{4\Lmax}\Big(1 + \frac{\mu_f}{\mu_F}\Big)\right)\xi_k & \text{if}\;\; \tfrac{\mu_F + \mu_f}{2\Lambda_{\max}} < 1\\
    \left(1- \frac{\tau\chi(\tilde \eta)}{N}\Big(1 - \frac{\Lmax-\mu_f}{\mu_F}\Big) \right)\xi_k & \text{otherwise.}
  \end{cases}
\end{eqnarray*}
Now, using \eqref{delta}, we can write $\E[\xi_{k+1}|x_k]  \leq \left(1 - \tfrac{\tau \chi(\tilde \eta) \delta}{N}\right) \xi_k.$
The result follows by applying Theorem~\ref{PeterThm}(ii) with $c_2 =  \frac{N}{\tau \chi(\tilde \eta) \delta}>1$.
\end{proof}



\section{Iteration complexity of FCD applied to smooth functions}\label{S_ComplexitySmooth}

The iteration complexity results presented in Section~\ref{S_Complexity} simplify in the smooth case. Therefore, in this section we assume that $\Psi = 0$ so that $F \equiv f$. In the smooth case, the quadratic model becomes
\begin{equation}\label{QSsmooth}
  Q(x_k;U_St_k^S) \equiv Q_S(x_k;t_k^S) = \langle \nabla_S f(x_k),t_k^S \rangle + \tfrac12  \langle H_k^S t_k^S ,t_k^S \rangle.
\end{equation}
Notice that the stationarity conditions also simplify in the following way
\begin{equation}\label{gS_smooth}
  g_S(x_k;t_k^S) = \nabla_S f(x_k) + H_k^S t_k^S,
\end{equation}
so that minimizing the (smooth) quadratic model \eqref{QSsmooth}, is equivalent to solving the linear system
\begin{equation}\label{smoothlinsystem}
  H_k^S t_k^S = - \nabla_S f(x_k).
\end{equation}

The matrix $H_k^S$ is always symmetric and positive definite (by definition), so an obvious approach is to solve \eqref{smoothlinsystem} using the Conjugate Gradient method (CG). It is possible to use other iterative methods to approximately minimize \eqref{QSsmooth}/solve \eqref{smoothlinsystem}. However, we will see that using CG gives better iteration complexity guarantees, so, for the results presented in this section, we assume that CG is always used to solve \eqref{smoothlinsystem}.

We have the following theorem, which allows us to determine the value of the function $Q_S(x_k;U_St_k^S)$ when an inexact direction $t_k^S$ is used.
\begin{lemma}[Lemma 7 in \cite{l1regSCfg}]\label{Lemma_Kimon}
  Let $Az=b$, where $A$ is a symmetric and positive definite matrix. Furthermore, let us assume that this system is solved using CG approximately; CG is terminated prematurely at the $j$th iteration. Then if CG is initialized with the zero solution, the approximate solution $z_j$ satisfies $z_j^TAz_j = z_j^Tb$. 
\end{lemma}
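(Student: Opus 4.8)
The plan is to exploit the defining optimality property of the Conjugate Gradient method over Krylov subspaces. Recall that, because CG is initialized at the zero solution, the initial residual is $r_0 = b - A z_0 = b$, so that the $j$th iterate lies in the Krylov subspace $\mathcal{K}_j := \mathrm{span}\{b, Ab, \ldots, A^{j-1}b\}$. Moreover, $z_j$ is precisely the minimizer of the strictly convex quadratic $q(z) := \tfrac12 \langle z, Az \rangle - \langle b, z \rangle$ restricted to the subspace $\mathcal{K}_j$; this is the standard variational characterization of CG when started from zero, and it is exactly the quantity (compare with \eqref{QSsmooth}) whose value we wish to control.

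First I would record the first-order optimality condition for this constrained minimization. Since $z_j$ minimizes $q$ over the subspace $\mathcal{K}_j$, its gradient $\nabla q(z_j) = A z_j - b$ must be orthogonal to $\mathcal{K}_j$. Equivalently, the residual $r_j := b - A z_j$ satisfies $\langle r_j, v \rangle = 0$ for every $v \in \mathcal{K}_j$. The key step is then immediate: because $z_j$ itself belongs to $\mathcal{K}_j$, I may take $v = z_j$ in the orthogonality relation to obtain $\langle r_j, z_j \rangle = 0$, that is, $\langle b - A z_j, z_j \rangle = 0$. Rearranging yields $z_j^T A z_j = z_j^T b$, as claimed.

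The main obstacle — indeed the only nontrivial ingredient — is justifying the two structural facts about CG that I have invoked: that the iterates remain in $\mathcal{K}_j$ and that $z_j$ is the subspace minimizer of $q$. Both follow from the usual development of CG (the conjugacy of the search directions and the $A$-orthogonality of $r_j$ to all previous residuals, so that $\{r_0,\dots,r_{j-1}\}$ spans $\mathcal{K}_j$ and $r_j \perp \mathcal{K}_j$), so I would either cite this directly or include a brief induction establishing $z_j \in \mathcal{K}_j$ together with $r_j \perp \mathcal{K}_j$. Once these are in hand the conclusion is a one-line consequence of orthogonality rather than any delicate estimate, which is precisely why the identity holds \emph{exactly}, and not merely approximately, no matter how prematurely CG is terminated at iteration $j$.
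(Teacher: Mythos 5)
Your proof is correct. Note that the paper you are working from does not actually prove this statement — it imports it verbatim as Lemma~7 of \cite{l1regSCfg} — so the comparison is with the proof in that reference, which establishes the identity computationally: expanding $z_j=\sum_{i=0}^{j-1}\alpha_i p_i$ in the conjugate search directions and evaluating both $z_j^TAz_j$ and $z_j^Tb$ termwise via the CG recurrences (the $A$-conjugacy of the $p_i$ and the identity $p_i^Tb = r_i^Tr_i$, valid because $r_0=b$). Your route is the cleaner variational one: with $z_0=0$ the iterate $z_j$ is the minimizer of $q(z)=\tfrac12\langle z,Az\rangle - \langle b,z\rangle$ over $\mathcal{K}_j=\mathrm{span}\{b,Ab,\dots,A^{j-1}b\}$, the Galerkin condition gives $r_j=b-Az_j\perp\mathcal{K}_j$, and testing against $v=z_j\in\mathcal{K}_j$ yields $\langle b-Az_j,z_j\rangle=0$ in one line. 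Both arguments rest on the same two structural facts about CG ($z_j\in\mathcal{K}_j$ and $r_j\perp\mathcal{K}_j$), which you correctly flag as the only ingredients requiring justification, and you correctly identify where the hypothesis $z_0=0$ enters (it makes $r_0=b$, hence $z_j\in\mathcal{K}_j$; without it the identity fails). The variational argument also makes transparent why the identity is exact for every $j$, however prematurely CG is stopped, whereas the recurrence-based computation obscures this; the computational proof, on the other hand, requires nothing beyond the algorithm's update formulas and so is self-contained without appeal to the subspace-minimization characterization.
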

Further, note that for smooth functions, the FCD stopping condition \eqref{Def_stoppingconditions_2} becomes
\begin{equation}\label{termination_SCsmooth}
  \frac{\|g_S(x;t_k^S) \|_2}{\|g_S(x;0) \|_2} \equiv \frac{\|\nabla_S f(x_k) + H_k^S t_k^S\|_2}{\|\nabla_S f(x_k)\|_2} \leq \eta_k^S.
\end{equation}
Therefore, in this section we will make the following assumption.
\begin{assumption}\label{Assume_tksmooth}
  We assume that if FCD (Algorithm~\ref{FCD}) is applied to problem \eqref{Def_F} with $\Psi = 0$, then, for all $k$ and all $ S\in2^{[N]}$, the inexact search direction $t_k^S$ is generated by applying $j$ iterations of CG to \eqref{smoothlinsystem}, initialized with the zero vector, until the stopping condition \eqref{termination_SCsmooth} is satisfied, where $\eta_k^S\in[0,1)$ .
\end{assumption}


Let $t_k^S$ be the inexact solution obtained by applying CG to \eqref{smoothlinsystem} starting from the zero vector, and terminating according to \eqref{termination_SCsmooth}, (i.e., let Assumption~\ref{Assume_tksmooth} hold). Then, by Lemma~\ref{Lemma_Kimon}
\begin{equation}\label{Kimonequality}
  \langle H_k^S t_k^S ,t_k^S \rangle = - \langle \nabla_S f(x_k),t_k^S \rangle.
\end{equation}
so that
\begin{equation}\label{QSsmooth_eval}
  Q_S(x_k;t_k^S) = \tfrac12\langle \nabla_S f(x_k),t_k^S \rangle \equiv -\tfrac12\langle H_k^S t_k^S ,t_k^S \rangle <0.
\end{equation}
Therefore, combining \eqref{QSsmooth_eval} with the fact that $Q(x_k;0) = 0$ in the smooth case, the stopping condition $Q(x_k;U_St_k^S) \;(= Q_S(x_k;t_k^S)) < Q(x_k;0)$ (i.e., \eqref{Def_stoppingconditions_1}) is always satisfied.

\subsection{Technical Results when $\Psi=0$}

We begin by presenting several technical results that will be needed when establishing iteration complexity results for FCD in the smooth ($\Psi=0$) case. The first result is analogous to Lemma~\ref{lem:Fdec2} for smooth functions, while the second is similar to Lemma~\ref{lem:Fdec}.
\begin{lemma}\label{Lemmagvstsmooth}
Let Assumptions \ref{Assume_Sampling}, \ref{Assump_HisPD}, \ref{Assump_fisLipschitz}, \ref{Assump_LambdaLrelation} and \ref{Assume_tksmooth} hold. Given $x_k$, let $S$, and $t^{S}_k\neq 0$ be generated by FCD (Algorithm~\ref{FCD}), applied to problem \eqref{Def_F} with $\Psi = 0$, and with $\eta_k^S\in[0,1)$. Then
\begin{equation}\label{gvstsmooth}
  \tfrac{1-\eta_k^S}{\Lambda_S} \|\nabla_S f(x_k)\|_{2}
  \leq \|t_k^S\|_{2}.
\end{equation}
\end{lemma}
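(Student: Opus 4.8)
The plan is to work directly from the CG termination condition \eqref{termination_SCsmooth}, which by Assumption~\ref{Assume_tksmooth} is precisely the rule used to accept $t_k^S$ in the smooth case. Clearing the denominator in \eqref{termination_SCsmooth} gives
\[
\|\nabla_S f(x_k) + H_k^S t_k^S\|_2 \leq \eta_k^S \|\nabla_S f(x_k)\|_2,
\]
so the entire argument reduces to extracting a lower bound on $\|t_k^S\|_2$ from this single inequality.

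First I would apply the reverse triangle inequality to the left-hand side, writing $\|\nabla_S f(x_k) + H_k^S t_k^S\|_2 \geq \|\nabla_S f(x_k)\|_2 - \|H_k^S t_k^S\|_2$. Combining this with the displayed inequality and rearranging yields
\[
(1 - \eta_k^S)\|\nabla_S f(x_k)\|_2 \leq \|H_k^S t_k^S\|_2.
\]
The orientation of the reverse triangle inequality (keeping $\|\nabla_S f(x_k)\|_2$ as the dominant term) is the one point that requires a moment's care, since flipping it would send the estimate in the wrong direction.

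Next I would bound the action of $H_k^S$ from above. Since $H_k^S$ is symmetric positive definite, $\|H_k^S t_k^S\|_2 \leq \|H_k^S\|_2 \|t_k^S\|_2 = \lambda_{\max}(H_k^S)\|t_k^S\|_2$, and Assumption~\ref{Assump_HisPD} (inequality \eqref{Assumption_lambdai}) gives $\lambda_{\max}(H_k^S) \leq \Lambda_S$. Substituting into the previous display produces $(1-\eta_k^S)\|\nabla_S f(x_k)\|_2 \leq \Lambda_S \|t_k^S\|_2$, and dividing by $\Lambda_S > 0$ delivers \eqref{gvstsmooth} exactly.

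I do not anticipate a serious obstacle: the result is essentially a two-line consequence of the residual test and the spectral bound on $H_k^S$. The only thing worth flagging is that the argument never invokes the CG-specific identity \eqref{Kimonequality}; it uses solely the residual bound \eqref{termination_SCsmooth} and the upper eigenvalue estimate $\Lambda_S$. Consequently the lemma in fact holds for \emph{any} inner solver that certifies the stopping condition, not just CG, and the proof of Lemma~\ref{Lemmagvstsmooth} can be presented in full generality even though the surrounding section fixes CG for the sake of the sharper complexity constants.
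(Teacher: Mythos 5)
Your proof is correct and is essentially identical to the paper's: the paper likewise starts from the termination condition $\|g_S(x_k;t_k^S)\|_2 \le \eta_k^S\|g_S(x_k;0)\|_2$, applies the reverse triangle inequality, uses $g_S(x_k;t_k^S)-g_S(x_k;0)=H_k^S t_k^S$ (your rearrangement is the same inequality written in the other order), and finishes with the spectral bound $\|H_k^S t_k^S\|_2 \le \Lambda_S\|t_k^S\|_2$. Your closing observation is also consistent with the paper, whose proof of this lemma never invokes the CG identity \eqref{Kimonequality} either; that identity is only needed later, in Lemma~\ref{iterationcomplexitysmoothSC}.
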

\begin{proof}
This proof closely follows that of \cite[Theorem $3.1$]{sqa}, and of Lemma~\ref{lem:Fdec2}. The termination condition \eqref{termination_SCsmooth} is $ \|g_S(x_k;t^S_k)\|_2 \le \eta_k^S \|g_S(x_k;0)\|_2$. Then
  \begin{eqnarray*}
  (1-\eta_k^S)\|g_S(x_k;0)\|_2 &\le& \|g_S(x_k;0)\|_2 - \|g_S(x_k;t^S_k)\|_2 \\
   &\le& \|g_S(x_k;t_k^S) - g_S(x_k;0) \|_2 \\
  & \overset{\eqref{gS_smooth}}{=} & \|H_k^S t_k^S\|_2 \leq \Lambda_S \|t_k^S\|_2. 
  \end{eqnarray*}
  It remains to recall that $\nabla_S f(x_k) \equiv g_S(x_k;0)$.
\end{proof}

\begin{lemma}\label{iterationcomplexitysmoothSC}
Let Assumptions \ref{Assume_Sampling}, \ref{Assump_HisPD}, \ref{Assump_fisLipschitz}, \ref{Assump_LambdaLrelation} and \ref{Assume_tksmooth} hold, let $\theta\in(0,1/2)$, and fix $\tilde \eta \in [0,1)$.
Given $x_k$, let $S$, $t^{S}_k\neq 0$ be generated by FCD (Algorithm~\ref{FCD}) applied to problem \eqref{Def_F} with $\Psi = 0$, and with $\eta_k^S\in[0,\tilde \eta]$. Then Step 6 of FCD will accept a step-size $\alpha$ that satisfies
  \begin{eqnarray}\label{Eq_alphaminSC}
    \alpha \geq \tfrac{\theta}2 \tfrac{\lambda_S}{L_S}>0.
  \end{eqnarray}
  Moreover,
  \begin{eqnarray}\label{fdiffSC}
  f(x_k) - f(x_{k} + \alpha U_St_k^S) \geq \tfrac{\vartheta(\tilde \eta)}{2}\|\nabla_S f(x_k)\|_{2}^2,
\end{eqnarray}
where
\begin{eqnarray}\label{Def_vartheta}
  \vartheta(\tilde \eta) \eqdef \min_{S:|S|=\tau}  \frac{ \theta \lambda_S^2}{L_S}\frac{(1-\tilde\eta)^2}{\Lambda_S^2}.
\end{eqnarray}
\end{lemma}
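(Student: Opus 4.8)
The plan is to specialize the argument of Lemma~\ref{lem:Fdec} to the smooth case $\Psi=0$, exploiting the conjugate-gradient identity \eqref{Kimonequality} to sharpen the constants, and then convert the resulting per-iteration decrease into a bound on $\|\nabla_S f(x_k)\|$ via Lemma~\ref{Lemmagvstsmooth}. First I would record the two structural simplifications that hold when $\Psi=0$. Since $\ell(x_k;\cdot)$ is now affine, the line-search gain is exact rather than merely convex: using \eqref{Kimonequality} one has $\ell(x_k;0)-\ell(x_k;\alpha U_St_k^S) = -\alpha\langle\nabla_Sf(x_k),t_k^S\rangle = \alpha\|t_k^S\|_{H_k^S}^2$. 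In particular this quantity is strictly positive (so $t_k^S$ is a genuine descent direction), and it carries a full $\|t_k^S\|_{H_k^S}^2$ rather than the factor $\tfrac12\|t_k^S\|_{H_k^S}^2$ appearing in \eqref{in:1}; this doubling is the source of the improved constant relative to Lemma~\ref{lem:Fdec}. Recall also from \eqref{QSsmooth_eval} that the first stopping condition \eqref{Def_stoppingconditions_1} is automatic, so only the line search must be analysed.

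Next I would establish the step-size bound \eqref{Eq_alphaminSC}. Combining the Lipschitz upper bound \eqref{S2_upperbound} with the identity above gives, exactly as in \eqref{in:3}--\eqref{in:10},
\[
  f(x_k) - f(x_k+\alpha U_St_k^S) - \theta\big(\ell(x_k;0)-\ell(x_k;\alpha U_St_k^S)\big) \;\geq\; \tfrac{\alpha}{2}\big(2(1-\theta)\lambda_S - L_S\alpha\big)\|t_k^S\|^2,
\]
so the line-search condition \eqref{Def_linesearch} is satisfied for every $\alpha\in(0,\,2(1-\theta)\lambda_S/L_S]$. A backtracking argument identical to the one in Lemma~\ref{lem:Fdec} (halving each rejected trial) then guarantees that the accepted step obeys $\alpha\geq(1-\theta)\lambda_S/L_S$. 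Since $\theta\in(0,1/2)$ forces $1-\theta>\tfrac12$, this lower bound exceeds $\lambda_S/(2L_S)$, which in turn dominates the claimed $\tfrac{\theta}{2}\lambda_S/L_S$; this is precisely where the hypothesis $\theta<1/2$ enters.

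Finally I would derive the decrease \eqref{fdiffSC}. From the satisfied line search together with the exact expression for the gain,
\[
  f(x_k) - f(x_{k+1}) \;\geq\; \theta\alpha\,\|t_k^S\|_{H_k^S}^2 \;\overset{\eqref{Assumption_lambdai}}{\geq}\; \theta\alpha\lambda_S\|t_k^S\|^2 \;\geq\; \tfrac{\theta\lambda_S^2}{2L_S}\|t_k^S\|^2,
\]
where the last step inserts the step-size bound $\alpha\geq\lambda_S/(2L_S)$ from the previous paragraph. I would then invoke Lemma~\ref{Lemmagvstsmooth} in the form $\|t_k^S\|^2 \geq \big((1-\eta_k^S)/\Lambda_S\big)^2\|\nabla_Sf(x_k)\|^2 \geq \big((1-\tilde\eta)/\Lambda_S\big)^2\|\nabla_Sf(x_k)\|^2$ (valid since $\eta_k^S\leq\tilde\eta$), and minimize the resulting coefficient over all admissible $S$, which yields \eqref{fdiffSC} with $\vartheta(\tilde\eta)$ as in \eqref{Def_vartheta}.

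I expect the main obstacle to be bookkeeping of the constants rather than any conceptual difficulty: one must be careful that the conjugate-gradient identity \eqref{Kimonequality} is exactly what removes the spurious factor of $\tfrac12$ and supplies the clean factor-of-two in the step-size threshold, and one must track how $\theta<1/2$ upgrades the raw backtracking bound $(1-\theta)\lambda_S/L_S$ to the working bound $\lambda_S/(2L_S)$ that is needed to land a \emph{single} power of $\theta$ (not $\theta^2$) in $\vartheta(\tilde\eta)$.
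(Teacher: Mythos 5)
Your proposal is correct and follows essentially the same route as the paper's proof: the CG identity \eqref{Kimonequality} turns the line-search gain into $\alpha\|t_k^S\|_{H_k^S}^2$, the Lipschitz bound \eqref{S2_upperbound} plus backtracking-by-halving yields the step-size floor, and Lemma~\ref{Lemmagvstsmooth} with $\eta_k^S\le\tilde\eta$ converts $\|t_k^S\|^2$ into $\|\nabla_S f(x_k)\|^2$. The only (cosmetic) difference is that you characterize the full acceptance interval $\bigl(0,\,2(1-\theta)\lambda_S/L_S\bigr]$, giving the marginally sharper floor $\alpha\ge(1-\theta)\lambda_S/L_S$, whereas the paper checks the single trial $\alpha^*=\lambda_S/L_S$ and uses $\theta<1/2$ there; both land on the working bound $\alpha\ge\lambda_S/(2L_S)$ and the same constant $\vartheta(\tilde\eta)$.
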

\begin{proof}
The proof follows that of Lemma 9 in \cite{l1regSCfg}. By Lipschitz continuity of the gradient of $f$:
\begin{eqnarray*}
  f(x_{k} + \alpha U_St_k^S) &\leq& f(x_k) + \alpha\langle \nabla_S f(x_k), t_k^S \rangle + \tfrac{L_S \alpha^2}2 \|t_k^S\|_2^2\\
  &\overset{\eqref{Kimonequality}}{\leq}& f(x_k) - \alpha \|t_k^S\|_{H_k^S}^2 + \tfrac{L_S \alpha^2}{2\lambda_S} \|t_k^S\|_{H_k^S}^2.
\end{eqnarray*}
The right hand side of the above inequality is minimized for $\alpha^* = \lambda_S/L_S$, which gives
$
  f(x_{k} + \alpha^* U_St_k^S) \le  f(x_k) - \frac{\lambda_S}{2L_S}\|t_k^S\|_{H_k^S}^2.
$
Observe that this step-size satisfies the backtracking line-search condition \eqref{Def_linesearch} because
$
  f(x_{k} + \alpha^* U_St_k^S) \leq f(x_k) - \tfrac12\tfrac{\lambda_S}{L_S}\|t_k^S\|_{H_k^S}^2 < f(x_k) - \theta \tfrac{\lambda_S}{L_S}\|t_k^S\|_{H_k^S}^2.
$
Suppose that any $\alpha$ that is rejected by the line search is halved for the next line search trial. Then, it is guaranteed that the $\alpha$ that is accepted in Step 6 of FCD, satisfies \eqref{Eq_alphaminSC}.
This, combined with Lemma~\ref{Lemmagvstsmooth}, results in the following decrease in the objective function:
\begin{eqnarray}\label{fdiffSCv2}
  f(x_k) - f(x_{k} + \alpha U_St_k^S)
  \geq  \tfrac{\theta \lambda_S}{2L_S}\|t_k^S\|_{H_k^S}^2
  \geq  \tfrac{\theta \lambda_S^2}{2L_S}\|t_k^S\|_{2}^2
  \geq  \tfrac{\theta \lambda_S^2}{2L_S} \tfrac{(1-\eta_k^S)^2}{\Lambda_S^2} \|\nabla_S f(x_k)\|_{2}^2.
\end{eqnarray}
Using the definition \eqref{Def_vartheta} in \eqref{fdiffSCv2} gives \eqref{fdiffSC}.
\end{proof}

\begin{remark}
  The quantity $\vartheta(\tilde \eta)$ in \eqref{Def_vartheta} will appear in the complexity result for FCD in the smooth case. Notice that, while $\chi(\tilde \eta)$ depends upon $\lambda_S^3$, $\vartheta(\tilde \eta)$ only depends upon $\lambda_S^2$. Interestingly, the dependence of $\vartheta(\tilde \eta)$ on $\lambda_S$ is unaffected by an inexact search direction.
  \begin{table}[H]\centering
    \begin{tabular}{|c |c |c|}
    \hline
    & $\lambda_S = L_S$ & $\lambda_S = \Lambda_S$\\
    \hline
      $\vartheta(\tilde \eta)$ & $\min_{S:|S|=\tau} \frac{\theta(1-\tilde\eta)^2 L_S}{\Lambda_S^2}$ & $\min_{S:|S|=\tau}  \frac{ \theta (1-\tilde\eta)^2}{L_S}$\\
      \hline
    \end{tabular}
  \end{table}
\end{remark}

\subsection{Iteration Complexity in the Convex Smooth Case}

We are now ready to present iteration complexity results for FCD applied to smooth convex functions of the form \eqref{Def_F} when $\Psi=0$.
\begin{theorem}\label{ItcomplexCInexactSmooth}
Let Assumptions \ref{Assume_Sampling}, \ref{Assump_HisPD}, \ref{Assump_fisLipschitz}, \ref{Assump_LambdaLrelation} and \ref{Assume_tksmooth} hold. Choose an initial point $x_0\in \R^N$, a target confidence $\rho \in (0,1)$, accuracy $0 <\epsilon<\max\{\mathcal{R}^2(x_0),f(x_0) - f^* \}$, fix $\tilde{\eta} \in [0,1)$. Let $\vartheta(\tilde \eta)$ be as defined in \eqref{Def_vartheta}, with $\theta \in (0,1/2)$ and let
    \begin{equation}
        K \geq \frac{2 N \mathcal{R}^2(x_0) }{\tau \vartheta(\eta)\epsilon} \left(1 + \ln \frac{1}{\rho}\right) + 2 - \frac{2 N \mathcal{R}^2(x_0) }{\tau \vartheta(\eta)(f(x_0) - f^*)}.
    \end{equation}
If $\{x_k\}_{k\geq 0}$ are the random points generated by FCD (Algorithm~\ref{FCD}) applied to problem \eqref{Def_F} with $\Psi=0$, where $\eta_k^S\in[0,1)$ satisfies $\eta_k^S/(1-\eta_k^S)^2 \leq \eta<1$ for all $ k$ and all $ S\in2^{[N]}$, then $\Prob(F(x_K) - F^* \leq \epsilon) \geq 1 - \rho$.
\end{theorem}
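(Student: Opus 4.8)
The plan is to follow the template of the proof of Theorem~\ref{ItcomplexCInexact}, but with the sharper smooth-case per-iteration decrease replacing the general composite estimate. First I would apply Lemma~\ref{iterationcomplexitysmoothSC}: under the stated hypotheses (in particular $\theta\in(0,1/2)$ and $\eta_k^S\in[0,\tilde\eta]$) it gives, at each step,
\[
  f(x_k) - f(x_{k+1}) \;\geq\; \tfrac{\vartheta(\tilde\eta)}{2}\,\|\nabla_S f(x_k)\|_2^2 .
\]
The crucial observation is that this inequality holds for \emph{every} realization of the sampled set $S$, including the degenerate case $\nabla_S f(x_k)=0$ (where FCD returns to Step~3, $x_{k+1}=x_k$, and both sides vanish). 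Hence it survives when we pass to the conditional expectation over the $\tau$-nice sampling.

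Second, I would take that conditional expectation. Writing $\|\nabla_S f(x_k)\|_2^2=\sum_{i\in S}(\nabla_i f(x_k))^2$ and applying the sampling identity \eqref{eq:tech_prob} with $\theta_i=(\nabla_i f(x_k))^2$ produces $\E[\|\nabla_S f(x_k)\|_2^2\mid x_k]=\tfrac{\tau}{N}\|\nabla f(x_k)\|_2^2$, and therefore
\[
  \E[f(x_{k+1})\mid x_k] \;\leq\; f(x_k) - \tfrac{\tau\vartheta(\tilde\eta)}{2N}\,\|\nabla f(x_k)\|_2^2 .
\]
Note this route avoids the extra $\Lmax$ factor that appeared via Lemma~\ref{Lemma_fandr} in the composite analysis, so the smooth case is genuinely cleaner.

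The main step is converting this gradient bound into a recursion quadratic in the suboptimality. Because the decrease above is nonnegative, $f$ is monotone, so $f(x_k)\le f(x_0)$ and thus $\|x_k-x_*\|\le\mathcal{R}(x_0)$ for every optimal $x_*$ by \eqref{Def_Rlevelset}. Convexity of $f$ and Cauchy--Schwarz then give $f(x_k)-f^*\le\langle\nabla f(x_k),x_k-x_*\rangle\le\|\nabla f(x_k)\|_2\,\mathcal{R}(x_0)$, whence $\|\nabla f(x_k)\|_2^2\ge(f(x_k)-f^*)^2/\mathcal{R}^2(x_0)$. Substituting and subtracting $f^*$, with $\xi_k\eqdef f(x_k)-f^*$, yields
\[
  \E[\xi_{k+1}\mid\xi_k]\;\le\;\xi_k-\tfrac{\xi_k^2}{c_1},\qquad c_1\eqdef\tfrac{2N\mathcal{R}^2(x_0)}{\tau\vartheta(\tilde\eta)} .
\]
This is exactly property~(i) of Theorem~\ref{PeterThm}, so I would finish by invoking that theorem with this $c_1$ and $\xi_0=f(x_0)-f^*$; the displayed lower bound on $K$ coincides with $\tfrac{c_1}{\epsilon}(1+\ln\tfrac1\rho)+2-\tfrac{c_1}{\xi_0}$, giving $\Prob(f(x_K)-f^*\le\epsilon)\ge1-\rho$.

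Since the analytic work is carried by Lemma~\ref{iterationcomplexitysmoothSC}, no single step is especially delicate; the two points requiring care are (a) justifying that the per-sample decrease holds for all $S$ so the expectation may be taken, and (b) reconciling the inexactness hypothesis, as the lemma is phrased with a uniform bound $\eta_k^S\le\tilde\eta$ while the theorem states the requirement through $\eta_k^S/(1-\eta_k^S)^2\le\eta$. One must confirm that the latter keeps $(1-\eta_k^S)^2$ uniformly bounded away from zero, so that $\vartheta$ (and hence $c_1$) is a genuine positive constant and the recursion indeed fits the form demanded by Theorem~\ref{PeterThm}.
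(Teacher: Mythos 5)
Your proposal is correct and takes essentially the same route as the paper's proof: Lemma~\ref{iterationcomplexitysmoothSC} for the per-iteration decrease, the sampling identity \eqref{eq:tech_prob} to pass to conditional expectation, monotonicity plus the levelset/convexity bound $f(x_k)-f^*\leq\|\nabla f(x_k)\|_2\,\mathcal{R}(x_0)$ to obtain the quadratic recursion, and Theorem~\ref{PeterThm}(i) with the same constant $c_1=2N\mathcal{R}^2(x_0)/(\tau\vartheta(\tilde\eta))$. Your closing caveat about reconciling the hypothesis $\eta_k^S/(1-\eta_k^S)^2\leq\eta$ with the lemma's uniform bound $\eta_k^S\leq\tilde\eta$ is apt --- the paper's proof silently works with $\vartheta(\tilde\eta)$ and does not address this discrepancy in the theorem statement.
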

\begin{proof}
  By Lemma~\ref{iterationcomplexitysmoothSC} $f(x_k)\leq f(x_0)$ for all $k$, and by convexity of $f$, we have
  $f(x_k)-f^*\leq \max_{x^*\in X^*} \langle \nabla f(x_k),x_k-x^* \rangle \leq \|\nabla f(x_k)\|_2 \mathcal{R}(x_0)$. Taking expectation of \eqref{fdiffSCv2}, and applying the previous, gives:
  \begin{eqnarray*}
  f(x_k) - \E[f(x_{k} + \alpha U_St_k^S)|x_k]
  \geq  \tfrac{\vartheta(\tilde \eta)\tau}{2N} \| f(x_k)\|_{2}^2
  \geq \tfrac{\vartheta(\tilde \eta)\tau}{2N} \left( \tfrac{f(x_k)-f^*}{\mathcal{R}(x_0)}\right)^2.
\end{eqnarray*}
Rearranging and applying Theorem~\ref{PeterThm}(i) with $c_1= 2N \mathcal{R}^2(x_0)/\tau \vartheta(\tilde \eta)$, gives the result.
\end{proof}

\subsection{Iteration Complexity in the Strongly Convex Smooth Case}
We now present iteration complexity results for FCD applied to smooth, strongly convex functions of the form \eqref{Def_F}, when $\Psi=0$.

\begin{theorem}\label{iterationcomplexity_SCS}
  Let $\Psi=0$, let $f=F$ be the strongly convex function defined in \eqref{Def_F} with $\mu_f >0$ and let Assumptions \ref{Assume_Sampling}, \ref{Assump_HisPD}, \ref{Assump_fisLipschitz}, \ref{Assump_LambdaLrelation} and \ref{Assume_tksmooth} hold.
 Choose an initial point $x_0\in \R^N$, a target confidence $\rho \in (0,1)$, a target accuracy $\epsilon >0$, and fix $\tilde{\eta} \in [0,1)$. Let $\vartheta(\tilde \eta)$ be as defined in \eqref{Def_vartheta}, with $\theta \in (0,1/2)$ and let
  \begin{equation}
    K \geq \frac{N}{\tau \vartheta(\tilde \eta) \mu_f} \ln \left(\frac{F(x_0)-F^*}{\epsilon \rho}\right).
  \end{equation}
If $\{x_k\}_{k\geq 0}$ are the random points generated by FCD (Algorithm~\ref{FCD}) applied to problem \eqref{Def_F} with $\Psi=0$, where $\eta_k^S\in[0,\tilde \eta]$, then $\Prob(F(x_K) - F^* \leq \epsilon) \geq 1 - \rho$.
\end{theorem}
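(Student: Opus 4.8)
The plan is to cast the recursion into the linear-contraction form required by Theorem~\ref{PeterThm}(ii). Set $\xi_k \eqdef F(x_k) - F^*$, and note that since $\Psi=0$ we have $F\equiv f$ and $F^*=f^*$. First I would record that $\{\xi_k\}$ is nonnegative and non-increasing: nonnegativity is immediate since $F^*$ is the optimal value, and monotonicity follows because Lemma~\ref{iterationcomplexitysmoothSC} guarantees $f(x_k) - f(x_{k+1}) \ge \tfrac{\vartheta(\tilde\eta)}{2}\|\nabla_S f(x_k)\|_2^2 \ge 0$ at every step. Thus the sequence satisfies the structural hypotheses of Theorem~\ref{PeterThm}, and it only remains to produce a one-step expected contraction of type (ii).

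Next I would take the conditional expectation (over the random subset $S$, given $x_k$) of the per-step decrease \eqref{fdiffSC}. Since $\vartheta(\tilde\eta)$ in \eqref{Def_vartheta} is a constant (a minimum over all admissible $S$, independent of which $S$ is drawn), and $\|\nabla_S f(x_k)\|_2^2 = \sum_{i\in S}(\nabla_i f(x_k))^2$, applying the sampling identity \eqref{eq:tech_prob} with $\theta_i = (\nabla_i f(x_k))^2$ converts the block gradient into the full gradient:
$$\E\big[\|\nabla_S f(x_k)\|_2^2 \,\big|\, x_k\big] = \tfrac{\tau}{N}\|\nabla f(x_k)\|_2^2 .$$
This yields $\,f(x_k) - \E[f(x_{k+1})\mid x_k] \ge \tfrac{\tau\vartheta(\tilde\eta)}{2N}\|\nabla f(x_k)\|_2^2$.

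The key analytic step is to convert this gradient-norm lower bound into a bound on the function-value gap using strong convexity of $f$. Minimizing both sides of \eqref{strongly_convex_1} (with $\Phi=f$, $x=x_k$) over $y$ gives the standard gradient-domination (Polyak--\L ojasiewicz) inequality $\|\nabla f(x_k)\|_2^2 \ge 2\mu_f\,(f(x_k) - f^*)$. Substituting this into the displayed decrease produces $\E[\xi_{k+1}\mid x_k] \le \big(1 - \tfrac{\tau\vartheta(\tilde\eta)\mu_f}{N}\big)\xi_k$, which is exactly property (ii) of Theorem~\ref{PeterThm} with $c_2 = \tfrac{N}{\tau\vartheta(\tilde\eta)\mu_f}$. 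Invoking that theorem with the stated lower bound $K \ge c_2\ln\!\big(\tfrac{F(x_0)-F^*}{\epsilon\rho}\big)$ then delivers $\Prob(F(x_K) - F^* \le \epsilon) \ge 1-\rho$.

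I expect the only genuine content to be the gradient-domination inequality; everything else mirrors the bookkeeping of the non-smooth strongly convex case (Theorem~\ref{ItcomplexSCInexact})---expectation via \eqref{eq:tech_prob}, then an appeal to Theorem~\ref{PeterThm}. The single minor point to verify is that the contraction factor lies in $(0,1)$, i.e.\ $c_2>1$; this is asserted exactly as it was for $\chi(\tilde\eta)$ earlier and requires no further work.
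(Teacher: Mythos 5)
Your proposal is correct and follows essentially the same route as the paper: the paper likewise takes the expectation of \eqref{fdiffSC}, invokes the strong-convexity bound $f(x_k)-f^* \le \tfrac{1}{2\mu_f}\|\nabla f(x_k)\|_2^2$, and applies Theorem~\ref{PeterThm}(ii) with $c_2 = N/(\mu_f \tau \vartheta(\tilde\eta))$. The only difference is that you spell out the bookkeeping the paper leaves implicit (monotonicity of $\xi_k$ and the use of \eqref{eq:tech_prob} with $\theta_i = (\nabla_i f(x_k))^2$), which is a harmless and arguably helpful elaboration.
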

\begin{proof}
Notice that, by strong convexity, $f(x_k)-f^* \leq \frac1{2\mu_f}\|\nabla f(x_k)\|_2^2$, (see, e.g., \cite[p.460]{bookboyd}). Taking the expectation of \eqref{fdiffSC}, and using the previous, gives:
  \begin{eqnarray*}
 \E[ f(x_k) - f(x_{k} + \alpha_S U_St_k^S)|x_k] \geq \tfrac{\tau \vartheta(\tilde \eta)}{2N}\|\nabla f(x_k)\|_{2}^2\geq \tfrac{\tau\vartheta(\tilde \eta) \mu_f}{N}(f(x_k)-f^*).
\end{eqnarray*}
Rearranging, and applying Theorem~\ref{PeterThm}(ii) with $c_2 = N/(\mu_f\tau\vartheta(\tilde \eta))>1$ gives the result.
\end{proof}


\section{Discussion/Comparison}

In this section we discuss the complexity results derived in this paper, and compare them with the current state of the art results.


\subsection{General Framework}

The FCD framework has been designed to be extremely \emph{flexible}. Moreover, this framework generalizes several existing algorithms.

To see this, suppose that FCD is set-up in the following way. Choose $|S|=\tau=1$, $H_k^i=L_i$ (where $L_i$ is the Lipschitz constant for the $i$th coordinate), suppose coordinate $i$ is selected with uniform probability and suppose that \eqref{eq_subproblem} is solved exactly at every iteration. Then the update generated by FCD is given by
\begin{eqnarray*}
  t_k^i = \arg \min_{t\in\R} \{\langle \nabla_i f(x), t \rangle + \tfrac{L_i}2 t^2 + \Psi_i(x^i + t)\}.
\end{eqnarray*}
However, this is equivalent to the update generated by UCDC (see Algorithm 1 and equation (17)) in \cite{Richtarik14}. So FCD generalizes UCDC, (and accepts a step-size $\alpha=1$ for all iterations in this case, i.e., no line-search is needed by FCD.)

Now suppose that we assume that $\Psi=0$, and that $f$ is strongly convex. For simplicity, let us assume that $f$ is quadratic. Moreover, suppose that FCD is set-up in the following way. Choose $|S|=\tau=1$, $H_k^i=\nabla_{ii}^2 f$ (for a quadratic function the Hessian $\nabla^2 f(\cdot)$ is independent of $x_k$), suppose coordinate $i$ is selected with uniform probability and suppose that \eqref{eq_subproblem} is solved exactly at every iteration. Then the update generated by FCD is $t_k^i = - (\nabla_{i} f(x_k))/(\nabla_{ii}^2 f)$. However, this is equivalent to the update generated by SDNA in \cite{Qu15} (Methods 1--3 in \cite{Qu15} are equivalent when $\tau=1$). So FCD generalizes SDNA, (and also accepts a step-size $\alpha=1$ for all iterations in this case, i.e., no line-search is needed by FCD.)

Clearly, a central advantage of FCD is it's flexibility. It recovers several existing state-of-the-art algorithms, depending on the particular choice of algorithm parameters. We stress that, for UCDC, one must always compute and use the Lipschitz constants, while for FCD, $H_k^S$ can be any positive semi-definite matrix. Further, SDNA only applies to problems that are smooth and strongly convex, whereas FCD can be applied to general convex problems of the form \eqref{Def_F}.

\subsection{Practicality vs Iteration Complexity}

We remark that it is expected that the complexity results for FCD may be slightly worse than for other existing methods. There are several reasons for this:
\begin{enumerate}
\item FCD is a general framework that enables substantial user flexibility, 
    as well as computational practicality. The user may choose the block size (i.e., the number of coordinates to be updated at each iteration) the matrix $H_k^S$. Therefore, the user has full flexibility regarding how \emph{expensive} each iteration of FCD is. For example, choosing $\tau$ small, and $H_k^S$ to be diagonal, will lead to cheap iterations. Choosing a larger block size and/or a full matrix $H_k^S$, will lead to iterations that are more expensive. However, it is anticipated that fewer iterations may be needed when $H_k^S$ is a good approximation to a principal minor of the Hessian. Because of the user friendly, general and flexible framework, the complexity rates for FCD  may be slightly worse than for methods that apply to specific problem instances, with specially tuned parameters.
\item Another difference between FCD, and other current methods, is that many methods do not incorporate any second order information, or they enforce that $H_k^S = L_S I$.
    The restrictions made in other methods are usually imposed to ensure that their surrogate function/model, overapproximates $F(x_k + U_St_k^S)$, so that minimizing their model will result in a reduction in the original function value.
    The assumption that FCD makes regarding $H_k^S$ is much weaker (\emph{any} symmetric positive definite $H_k^S$ is allowed), which means that FCD requires a line search, to guarantee a reduction in the objective function value at every iteration, and ultimately, convergence. We prefer an algorithm with fewer assumptions, so that it is \emph{widely applicable} and \emph{practical}, but we pay the price of a (potentially) slightly weaker complexity result.
\item FCD is one of the (very) few coordinate descent methods that allow inexact updates. It is expected that inexact updates will lead to cheaper iterations (it is intuitive that solving a subproblem inexactly, will usually require less computational effort than solving it exactly), although this may result in slightly more iterations compared with an `exact' method. We stress that, despite the potential increase in the number of iterations, it is still expected that inexact updates will result in a lower computational run time overall.
\end{enumerate}
We stress that, despite the slightly weaker iteration complexity results for FCD, the algorithm works extremely well in practice, and often outperforms algorithms with `better' complexity rates, (see the numerical experiments in Section~\ref{S_Numerical}). Moreover, the user has some control over the complexity results for FCD through their choice of parameters. For example, $\lambda_S$ and $\Lambda_S$ (Assumption~\ref{Assump_HisPD}) are user defined parameters, which appear in the complexity rates for FCD (see \eqref{chi_defs} and \eqref{Def_vartheta}), so the complexity rate for FCD can be larger or smaller, depending on how the user chooses these parameters.

\subsection{Comparison of complexity results}

We will now compare the complexity results obtained in this work for FCD, with those of UCDC in \cite{Richtarik14} and PCDM \cite{Richtarik15}.

\subsubsection{Comparison of complexity results with UCDC \cite{Richtarik14}}\label{CompareUCDC}

UCDC is a \emph{serial} method that allows \emph{exact updates only}, so in this section we assume that $|S|=\tau=1$, and that FCD uses only exact updates ($\tilde \eta \equiv 0$). This means that we have $L_i$ for $i=1,\dots,N$, and $H_k^i$ is a scalar, for all $k$. The complexity results in \cite{Richtarik14} use `scaled' quantities that depend on the vector of Lipschitz constants ($L=(L_1,\dots,L_N)$) while the quantities in this work depend on the `unscaled' 2-norm (i.e., $e = (1,\dots,1)$).\footnote{Using the notation of \cite{Richtarik14}, we have $n=N$. Further, although UCDC allows arbitrary probabilities in the smooth case, we restrict our attention to uniform probabilities only, so $N$ appears in the C-S and SC-S results in Table~\ref{Table_Comparison}.} Moreover, in \cite{Richtarik14}, the authors suppose that strong convexity can come from $f$ or $\Psi$, or both. Here, to allow easy comparison, we suppose that $\mu_{\Psi}=0$, which means that, in this section, if strong convexity is assumed, then $\mu_f=\mu_F$. To this end, define the following constants, which depend a `scaling' vector $w \in \R_{++}^N$:
\begin{eqnarray}
\label{c1}
    c_1(w)= \tfrac{2N}{\epsilon} \max\{{\cal R}_{w}^2(x_0),F(x_0)-F^*\}, \text{ and }
    c_2(w)= \tfrac{2N {\cal R}_{w}^2(x_0)}{\epsilon}.
  \end{eqnarray}
Table~\ref{Table_Comparison} presents the complexity results obtained for UCDC \cite{Richtarik14}, and the complexity results obtained in this work for FCD.
The following notation is used in the table. Constants $\xi_0 = F(x_0)-F^*$, $\mu_{\phi}(w)$ denotes the strong convexity parameter of function $\phi$ with respect to a `$w$-weighted' norm, $\mu = (\mu_f + \mu_\Psi)/(1 + \mu_\Psi)$ and ${\cal R}_{w}(x_0)$ is defined in \eqref{Def_Rlevelset_2}. 

\begin{table}[h!]\centering
\begin{tabular}{|c| c| c| c|}
\hline
$F$ & UCDC \cite{Richtarik14} & FCD [this paper] & Theorem \\
\hline
C-N
& $\displaystyle c_1(L) \left(1+\log\frac1{\rho} - \frac{\epsilon}{\xi_0}\right)+2$
& $\displaystyle \frac{c_1(e)}{\chi(0)}\left(1+\log\frac1{\rho} - \frac{\epsilon}{\xi_0}\right)+2$
& \ref{ItcomplexCInexact}(i)\\
\cline{2-4}
C-N & $\displaystyle c_2(L) \log\left(\frac{\xi_0}{\epsilon \rho}\right)$
& $\displaystyle \frac{c_2(e)}{\chi(0)} \log\left(\frac{\xi_0}{\epsilon \rho}\right)$
& \ref{ItcomplexCInexact}(ii)\\
\hline
SC-N & $\displaystyle \frac{N}{\mu_f(L)} \log\left(\frac{\xi_0}{\epsilon \rho}\right)$
& $\displaystyle \frac{N}{\chi(0) \delta}\log\left(\frac{\xi_0}{\epsilon \rho}\right)$
& \ref{ItcomplexSCInexact} \\
\hline
C-S & $\displaystyle c_2(L) \log\left(\frac{\xi_0}{\epsilon \rho}\right)$
&  $\displaystyle \frac{c_2(e)}{\vartheta(0)} \log\left(\frac{\xi_0}{\epsilon \rho}\right)$
& \ref{ItcomplexCInexactSmooth} \\
\hline
SC-S & $\displaystyle \frac{N}{\mu_f(L)} \log\left(\frac{\xi_0}{\epsilon \rho}\right)$
& $\displaystyle \frac{N}{\vartheta(0) \mu_f} \log\left(\frac{\xi_0}{\epsilon \rho}\right)$
& \ref{iterationcomplexity_SCS}\\
\hline
\end{tabular}
\caption{Comparison of the iteration complexity results for coordinate descent methods using an inexact update and using an exact update (C=Convex, SC=Strongly Convex, N=Nonsmooth, S = Smooth).}\vspace{-5mm}
\label{Table_Comparison}
\end{table}

\paragraph{Case C-N(i).} In this case, the difference between the complexity rates for UCDC and FCD appears in the constants
       $c_1(L)$ and $c_1(e)/\chi(0)$.
Clearly, if ${\cal R}_{L}^2(x_0)\leq \xi_0^F$ then $c_1(L) = c_1(e)$, so that the the complexity rate of FCD is simply $1/\chi(0)$ times worse than UCDC. (Recall Table~\ref{Table_chi} for a comparison of $\chi(\cdot)$ values.)
On the other hand, if ${\cal R}_{L}^2(x_0) > \xi_0^F$, then it is difficult to compare the complexity rates directly. Notice that the dependence of FCD on the Lipschitz constants is explicit (the constants $L_i$ appear in $\chi(0)$), whereas the dependence of UCDC on the Lipschitz constants is implicit in the weighted term $\mathcal{R}_L^2(x_0)$.
However, consider the case when $L_i=L_j$ for all $i,j$. Then
$\|y-x\|_L^2 = \sum_{i=1}^N L_i \|y_i-x_i\|_2^2 = L_i\|y-x\|_2^2,$ so that ${\cal R}_{L}^2(x_0) = L_i{\cal R}^2(x_0)$ in this case. Then we simply compare $L_i$ vs $1/\chi(0)$. Now suppose we choose $H_k^i = L_i$ for all $k$, so that $\chi(0) = (1-\theta)/2$. If $\theta \approx 0$, then FCD has a better complexity rate than UCDC whenever $L_i > 2$. Of course, in other circumstances, UCDC may have a better complexity rate than FCD --- this is simply one particular example.

\paragraph{Case C-N(ii).} Here we compare $c_2(L)$ and $c_2(\mathbf{1})$. Both rates contain the  term $2N/\epsilon$, so it remains to compare
${\cal R}_{L}^2(x_0)$ and ${\cal R}^2(x_0)/ \chi(0)$. Thus, see the discussion for C-N(i).

\paragraph{Case SC-N.} Substituting $\mu_f=\mu_F$ into \eqref{delta},  shows that $\delta = \mu_f/ \Lambda_{\max}$, so in this case we compare $1/\mu_f(L)$ with $\Lambda_{\max}/\chi(0)\mu_f$. Moreover, the Lipschitz constants appear explicitly in $\chi(0)$ in the analysis in this work, while they are again implicit in the term $\mu_f(L)$ in \cite{Richtarik14}. Again, this makes it difficult to compare the rates directly. However, consider the following example, where $\mu_f= \Lambda_{\max}$. Then $\lambda_S = L_S$ (because $\lambda_S\leq \Lambda_{\max} = \mu_f \leq L_S$), so $\Lambda_{\max}/\chi(0)\mu_f = 2/(1-\theta)$. Then, if $\theta \approx 0$, FCD has a better complexity rate than UCDC, whenever $\mu_f(L) \leq 1/2$. (Note that it always holds that $\mu_f(L)\leq 1$ \cite[(12)]{Richtarik14}, and this does not necessarily imply that $(\mu_f(e)\equiv)\mu_f \leq 1/2$).

\paragraph{Case C-S.} Similar to Case C-N(ii), we compare ${\cal R}_{L}^2(x_0)$ and ${\cal R}^2(x_0)/ \vartheta(0)$. Suppose we have the same set up as Case C-N(i), so that we compare $L_i$ with $1/\vartheta(0)$. Choosing $\lambda_S = \Lambda_S$ and $\theta\approx 1/2$ shows that $1/\vartheta(0) \approx 2 L_i$, so that the complexity rates for FCD is approximately twice that for UCDC.

\paragraph{Case SC-S.} Here we compare the quantities $1/\mu_f(L)$ and $1/\mu_f\vartheta(0)$. All that can be said for UCDC is that $1/\mu_f(L) \geq 1$ because $\mu_f(L)\leq 1$. Sometimes it may be the case that $1/\mu_f(L) \approx 1$, whereas in other cases, we may have $1/\mu_f(L) \gg 1$. Note that, for FCD, if we choose $\lambda_S = \Lambda_S$, then $1/\mu_f \vartheta(0) = L_S/\theta \mu_f$. For a well conditioned problem, (i.e., $\mu_f\approx L_S$), choosing $\theta\approx 1$ gives $1/\mu_f \vartheta(0)\approx 1$. On the other hand, if $L_S \gg \theta\mu_f$, then $1/\mu_f \vartheta(0) \gg 1$. So, while the rates for FDC and UCDC cannot be compared directly, the complexity rates are similar, when taking into account the problem conditioning.

\subsubsection{Comparison of complexity results with PCDM \cite{Richtarik15}}
Now we compare FCD with PCDM \cite{Richtarik15}. PCDM is a parallel coordinate descent method, that uses \emph{exact updates}, so in this case we suppose that $|S| = \tau \geq 1$, and that FCD also uses exact updates. Furthermore, we will suppose that the coordinates are sampled according to Definition~\ref{Def_Sampling} (which is called a $\tau$-nice sampling in \cite{Richtarik15}). Define $\beta = 1+ \frac{(\omega -1)(\tau-1)}{\max\{1,N-1\}}$, where $\omega$ is a measure of the `separability' of the objective function. Then, the iteration complexity for PCDM is presented in Table~\ref{TablePCDM}.
\begin{table}[h!]\centering
  \begin{tabular}{|c| c |c|}
\hline
& C-N & SC-N\\
\hline
  PCDM \cite{Richtarik15}
  & $\displaystyle \tfrac{2 N}{\tau \epsilon}\max\{\beta \mathcal{R}^2_L(x_0),F(x_0)-F^*\}\left(1+\log\frac1{\rho} - \frac{\epsilon}{\xi_0}\right)+2$
  & $\displaystyle \tfrac{\beta N}{\tau \mu_f(L)}\log\left(\frac{\xi_0}{\epsilon\rho}\right)$\\
  \hline
  FCD & $\displaystyle\tfrac{2N}{\tau\epsilon \chi(0)} \max\{{\cal R}^2(x_0),F(x_0)-F^*\}\left(1+\log\frac1{\rho} - \frac{\epsilon}{\xi_0}\right)+2$
   &
    $\displaystyle\frac{N}{\tau\delta \chi(0)}\log\left(\frac{\xi_0}{\epsilon\rho}\right)$\\
    \hline
\end{tabular}
\caption{Complexity results for PCDM \cite{Richtarik15}.}
\label{TablePCDM}
\end{table}

\paragraph{C-N.} Note that $\beta\geq 1$, so if $\beta \mathcal{R}^2_L(x_0)\leq F(x_0)-F^*$, then the complexity rate for FCD is $1/\chi(0)$ times worse than for PCDM. On the other hand, suppose that $\mathcal{R}^2_L(x_0)> F(x_0)-F^*$. Then we must compare the quantities $\beta \mathcal{R}^2_L(x_0)$ and $\mathcal{R}^2(x_0)/\chi(0)$. Similar arguments to the Case C-N(i) in Section~\ref{CompareUCDC} can be made here. However, we note that $\beta$ depends upon $\tau$, $N$ and $\omega$. While $\tau$ and $N$ can be controlled (they are parameters of the algorithm), $\omega$ depends upon the problem data, and could be very large, implying large $\beta$.


\paragraph{SC-N.} In this case we compare the quantities $\beta/\mu_f(L)$ and $1/\delta\chi(0)$. The case for FCD and $1/\delta\chi(0)$ is made in  Section~\ref{CompareUCDC}, Case SC-N. Now consider PCDM. For a well conditioned problem, and separable problem, we may have $\mu_f(L)\approx 1 \approx \omega$, so that $\beta/\mu_f(L) \approx 1$. However, for a poorly conditioned problem, $\omega$ may be very large, and $\mu_f(L)$ may be very small, so that $\beta / \mu_f(L)\gg 1$. Again, while the rates for FDC and PCDM cannot be compared directly, the complexity rates are similar, when taking into account the problem conditioning.



\section{Numerical Experiments} \label{S_Numerical}
In this section we compare the performance of FCD with UCDC \cite{Richtarik14}
on the $\ell_1$- and $\ell_2$-regularized logistic regression problem, which has the form  \eqref{Def_F} with
\begin{equation}
\label{logistic_regression}
f(x) = \sum_{j=1}^m\log(1+e^{-b_jx^T a_j}) \quad \mbox{and} \quad \Psi(x) =c \|x\|_1 \ \mbox{or} \ \Psi(x) =c \|x\|_2.
\end{equation}
Here $c>0$, $a_j\in\mathbb{R}^N$ $\forall j=1,2,\ldots,m$ are the training samples and $b_j\in\{-1,+1\}$ are the corresponding labels. Problems of the form \eqref{logistic_regression} are important in machine learning and are used for training a linear classifier $x\in\mathbb{R}^N$ that separates input data into two distinct clusters; see, for example, \cite{yuanho}. We present the performance of FCD and UCDC on six real-world large-scale data sets from the LIBSVM data collection \cite{Chang11}.  Details of the data sets are given in Table~\ref{LRprobs}, where $A\in\mathbb{R}^{m\times N}$ is a matrix whose rows represent the training samples.
\begin{table}[h!]
	\centering
	\caption{Properties of the $\ell_1$-regularized logistic regression data sets considered here. Note that \rm{ nnz($A$)} denotes the number of nonzeros in matrix $A$; consequently, the fourth column gives the (relative) sparsity of $A$.}
\begin{tabular}{|l|c|c|c|}
\hline
		\textbf{Problem}& $m$	& $N$		 & \rm{nnz($A$)}/$mN$  \\
\hline
kdd2010 (algebra)	        & $8,407,752$   & $20,216,830$	& $1.79e$-$6$  \\
kdd2010 (bridge to algebra)	& 19,264,097	& 29,890,095	&  $9.83e$-$7$ \\
news20.binary	            & 19,996		& 1,355,191	    &    $3.35e$-$4$ \\
rcv1.binary	                & 20,242	    &	47,236	    &   $1.16e$-$3$ \\
url	                        & 2,396,130		& 3,231,961	    &  $3.57e$-$5$  \\
webspam	                    & $350,000$	    & $16,609,143$	& $2.24e$-$4$  \\
\hline
	\end{tabular}
	\label{LRprobs}
\end{table}


\subsection{Implementations of FCD and UCDC}

\begin{table}[h!]
	\centering
	\caption{Algorithm parameters for FCD~v.1, FCD~v.2, UCDC~v.1, UCDC~v.2. For FCD~v.2 we set $\rho=10^{-6}$, $L_j$ is the Lipschitz constant for the $j$-th coordinate (see \eqref{S2_Lipschitz}), and $\bar L^S = \sum_{j\in S}L_j$.}
\begin{tabular}{|l|c|c|c|}
\hline
		Algorithm & $\tau$	& $H_k^S$	($\forall k,S$)	   \\
\hline
FCD~v.1 & $\ceil{0.001 N}$ & $\text{diag}(\nabla_S^2 f(x_k))$,   \\
FCD~v.2 & $\ceil{0.001 N}$ & $\nabla_S^2 f(x_k) + \rho I_{N_i}$ \\
UCDC~v.1 & $1$& $L_j$  \\
UCDC~v.2 & $\ceil{0.001 N}$& $\bar L^S I$  \\
\hline
	\end{tabular}
	\label{AlgTable}
\end{table}

\paragraph{FCD.} At every iteration of FCD, $\tau$ coordinates are sampled uniformly at random without replacement, i.e., Assumption~\ref{Assume_Sampling} holds. FCD is implemented with two different choices for $H_k^S$, as shown in Table~\ref{AlgTable}.
All other parameters are the same for FCD~v.1 and FCD~v.2. Clearly, for FCD~v.1, subproblem \eqref{eq_subproblem} is separable
and for $\ell_1$-regularization it has the closed form solution
\begin{equation}\label{tupdateSFT}
  t^S_k = \mathcal{S}(x^S_k - (H_k^S)^{-1}\nabla_S f(x^S_k), c\, \text{diag}((H_k^S)^{-1})),
\end{equation}
where
\begin{equation}
\label{soft_thresholding}
 \mathcal{S}(u,v) = \mbox{sign}(u) \max(|u| - v,0)
\end{equation}
is the well-known soft-thresholding operator, which is applied component wise when $u$ and $v$ are vectors. Since subproblem \eqref{eq_subproblem} is solved exactly via \eqref{tupdateSFT}, there is no need to verify the stopping conditions \eqref{Def_stoppingconditions_2}.
For $\ell_2$-regularization and FCD~v.1 subproblem \eqref{eq_subproblem} has a trivial and inexpensive solution.

For FCD~v.2, parameter $\rho$ ensures that $H_k^S$ is positive definite, and consequently, subproblem \eqref{eq_subproblem} is well defined.\footnote{Clearly, there is a trade-off when selecting $\rho$. The larger $\rho$ is, the smaller the condition number of $H_k$, so that \eqref{eq_subproblem} will be solved quickly using an iterative solver. However, if $\rho$ is too large then $H_k^S \approx \rho I $ so that essential second order information from $\nabla^2 f(x_k)$ may be lost.} Note that the matrix $H_k^S$ is never explicitly formed, we only perform matrix-vector products with it in a matrix-free manner. Moreover, for FCD~v.2, subproblem \eqref{eq_subproblem} is solved iteratively using an Orthant Wise Limited-memory Quasi-Newton (OWL) method in the case of $\ell_1$-regularization, which can be downloaded from \url{http://www.di.ens.fr/~mschmidt/Software/L1General.html}. OWL was chosen because it has been shown in \cite{sqa} to result in a robust and efficient deterministic version of FCD, i.e. $\tau=N$ (one block of size $N$). While for $\ell_2$-regularization and FCD~v.2 an iterative method like Conjugate Gradients is used.

\paragraph{UCDC.}
We compare FCD with the current state-of-the-art coordinate descent method, namely the Uniform Coordinate Descent for Composite functions (UCDC) method, \cite[Algorithm $2$]{Richtarik14}.  For UCDC, the block size $\tau$ and the decomposition of $\mathbb{R}^N$ into $\ceil{N/\tau}$ blocks must be \emph{fixed a-priori}, and at each iteration of UCDC the blocks are selected with uniform probability. Again, UCDC is also implemented with two different choices of $H_k^S$, as seen in Table~\ref{AlgTable}. The reasons for this set-up is as follows. For UCDC, the block Lipschitz constants are explicitly required in the algorithm, and, while the Lipschitz constants for \emph{coordinates} can be computed with relative ease, the Lipschitz constants for \emph{blocks of coordinates} can be far more expensive to compute. To this end, UCDC~v.1 is the case of coordinate updates where the Lipschitz constants are straightforward to compute, and UCDC~v.2 considers blocks of $\tau$ coordinates, where $\bar{L}^S \eqdef \sum_{j\in S}L_j$ is used as an \emph{inexpensive overapproximation} of the true block Lipschitz constant.  Here, the (coordinate) Lipschitz constants for UCDC are computed via \cite[Table~10]{Richtarik14}. Note that, for both UCDC~v.1 and v.2, the choice of $H_k^S$ leads to a separable subproblem \eqref{eq_subproblem}, which is solved exactly using it's closed for solution.

\begin{remark}
   Notice that Algorithm $2$ in \cite{Richtarik14} is a special case of FCD where the subproblem \eqref{eq_subproblem} is solved exactly. A line search is unnecessary in this case because, for $H_k^S$ as stated, \eqref{eq_subproblem} is an overapproximation of $F$ along block coordinate direction $t^S$, and minimizing this overapproximation exactly will lead to a decrease in the objective function; see \cite{Richtarik14}.
\end{remark}

\subsection{Termination Criteria and Parameter Tuning}
In these experiments FCD and UCDC are terminated when their running time exceeds an allowed maximum. Note that, using subgradients as a measure of the distance from optimality or any other operation of similar cost are considered to be too expensive a task for large scale problems, and are therefore not appropriate here. Here, all methods were terminated after the relative error of the objective function dropped below $10^{-8}$ or after $28$ hours of wall-clock time.

Furthermore, for FCD we set parameter $\eta^S_k$ in \eqref{Def_stoppingconditions_2} equal to $0.9$ $\forall i,k$.
The maximum number of backtracking line search iterations is set to $200$, $\theta=10^{-3}$ and each iteration the step-size is halved. For all instances we computed $c$ after performing fivefold cross validation as proposed in \cite{svmguid}. For UCDC the coordinate Lipschitz constants $L_j$ $\forall j$ are calculated once at the beginning of the algorithm and this task is included in the overall running time. Finally, all methods are initialized with the zero solution.

\subsection{Results for $\ell_1$-regularized logistic regression}\label{subsec:l1regexper}

The results of this experiment is shown in the following figures. Figure~\ref{Fig_small} shows the results of FCD and UCDC on several of the smaller data sets, and Figure~\ref{Fig_big} shows the results of FCD and UCDC on several of the larger data sets
for $\ell_1$-regularized logistic regression. Note that \textit{the calculation of $F(x)$ is included in the running time of FCD when it is required by line search}. The reported running time of UCDC includes the time required to compute the Lipschitz constants at the beginning of the algorithm.
\begin{figure}[h!]\centering
\subfloat[news20.b, $\frac{F(x) - F^*}{F^*}$ vs iterations]{\label{fig_rw_webspam_its}\includegraphics[scale=0.27]{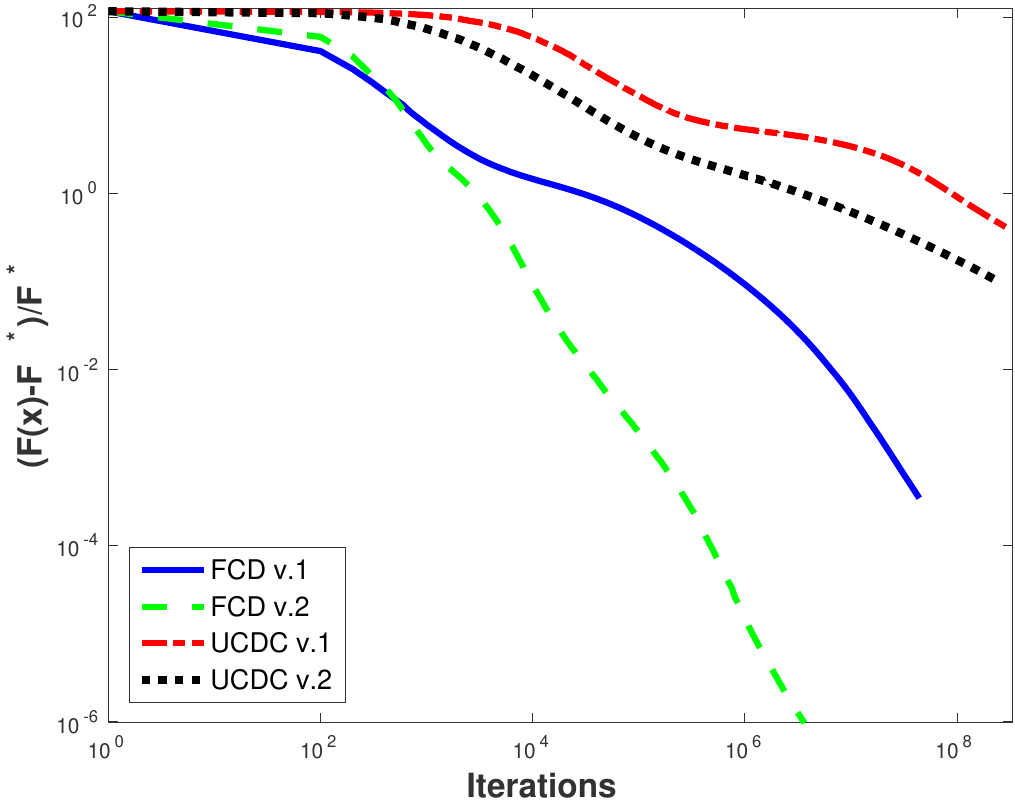}}
\subfloat[rcv1,          $\frac{F(x) - F^*}{F^*}$ vs iterations]{\label{fig2c}\includegraphics[scale=0.27]{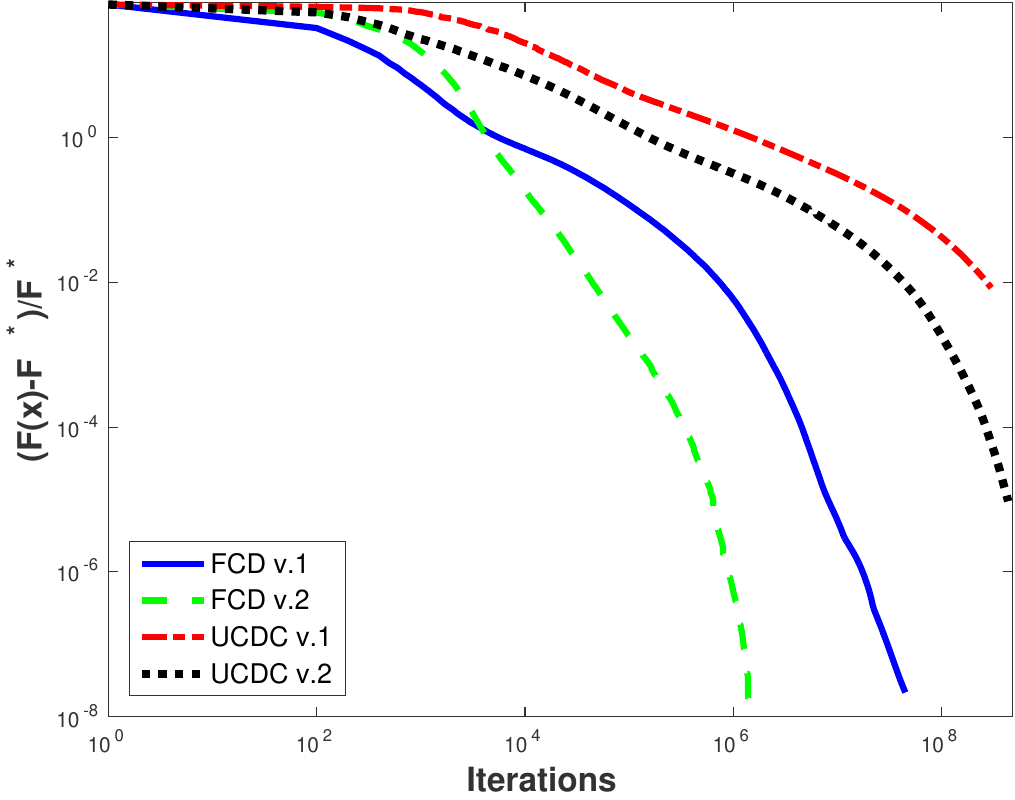}}
\subfloat[url,            $\frac{F(x) - F^*}{F^*}$ vs iterations]{\label{fig2a}\includegraphics[scale=0.27]{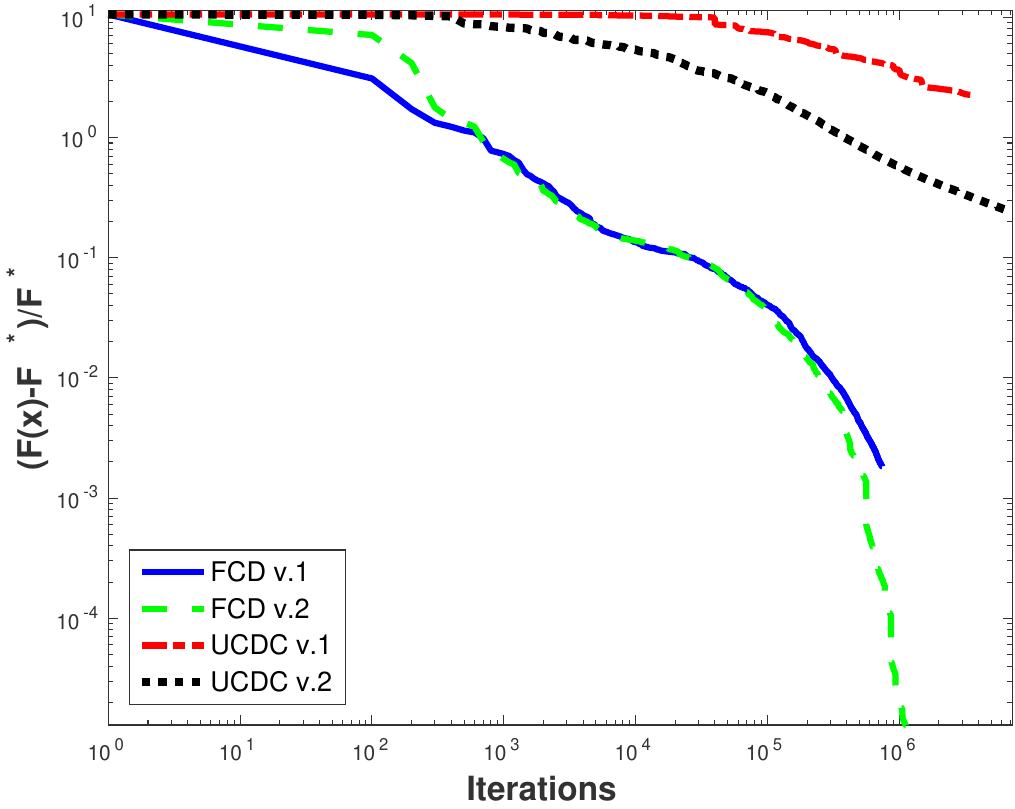}}\\
\subfloat[news20.b, $\frac{F(x) - F^*}{F^*}$ vs time]{\label{fig_rw_webspam_fn}\includegraphics[scale=0.27]{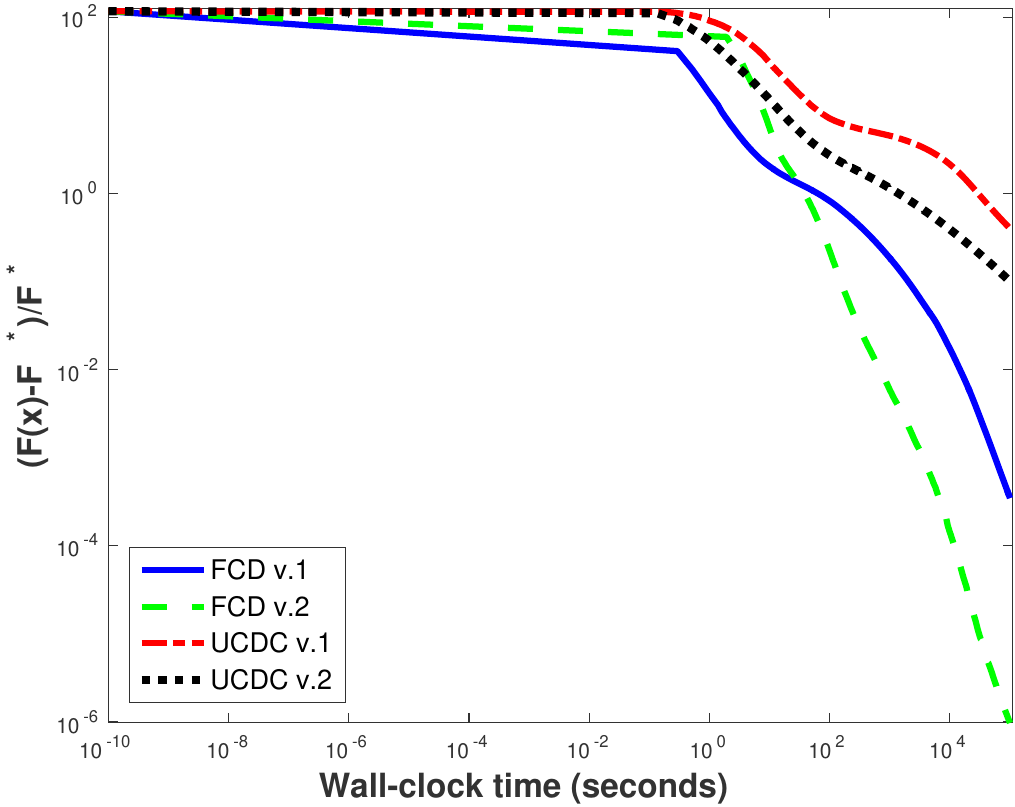}}
\subfloat[rcv1,          $\frac{F(x) - F^*}{F^*}$ vs time]{\label{fig2d}\includegraphics[scale=0.27]{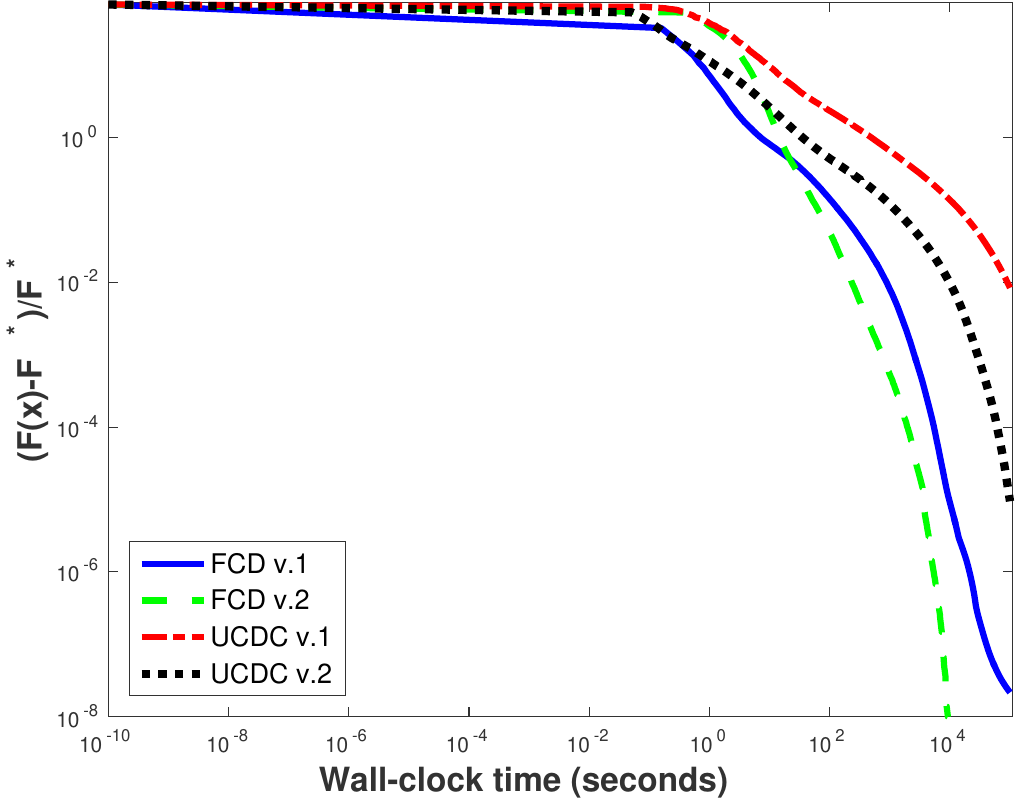}}
\subfloat[url,           $\frac{F(x) - F^*}{F^*}$ vs time]{\label{fig2b}\includegraphics[scale=0.27]{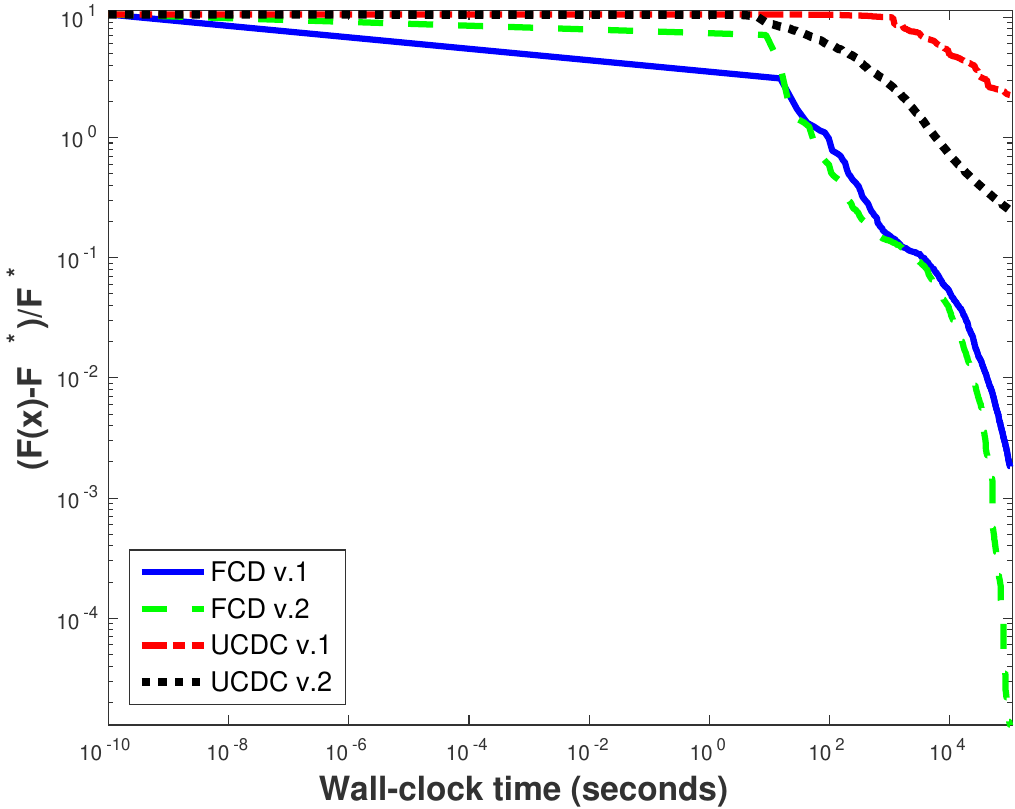}}\\
\caption{Performance of FCD and UCDC on 3 large scale $\ell_1$-regularized logistic regression problems.
The first row shows the function value $\frac{F(x) - F^*}{F^*}$ vs the number of iterations, while the second row shows the function value $\frac{F(x) - F^*}{F^*}$ vs time. Columns 1, 2 and 3 correspond to data sets news20.binary, rcv1 and url, respectively. All plots are in log-scale. For some figures, i.e., Figures \ref{fig_rw_webspam_fn}, \ref{fig2d} and \ref{fig2b}, we measured $\frac{F(x) - F^*}{F^*}$ every $100$ iterations of the algorithms, which explains the initial rapid decrease in $\frac{F(x) - F^*}{F^*}$.}
\label{Fig_small}
\end{figure}
\begin{figure}[h!]
\centering
\subfloat[webspam, $\frac{F(x) - F^*}{F^*}$ vs iterations]{\label{fig_rw_webspam_its}\includegraphics[scale=0.27]{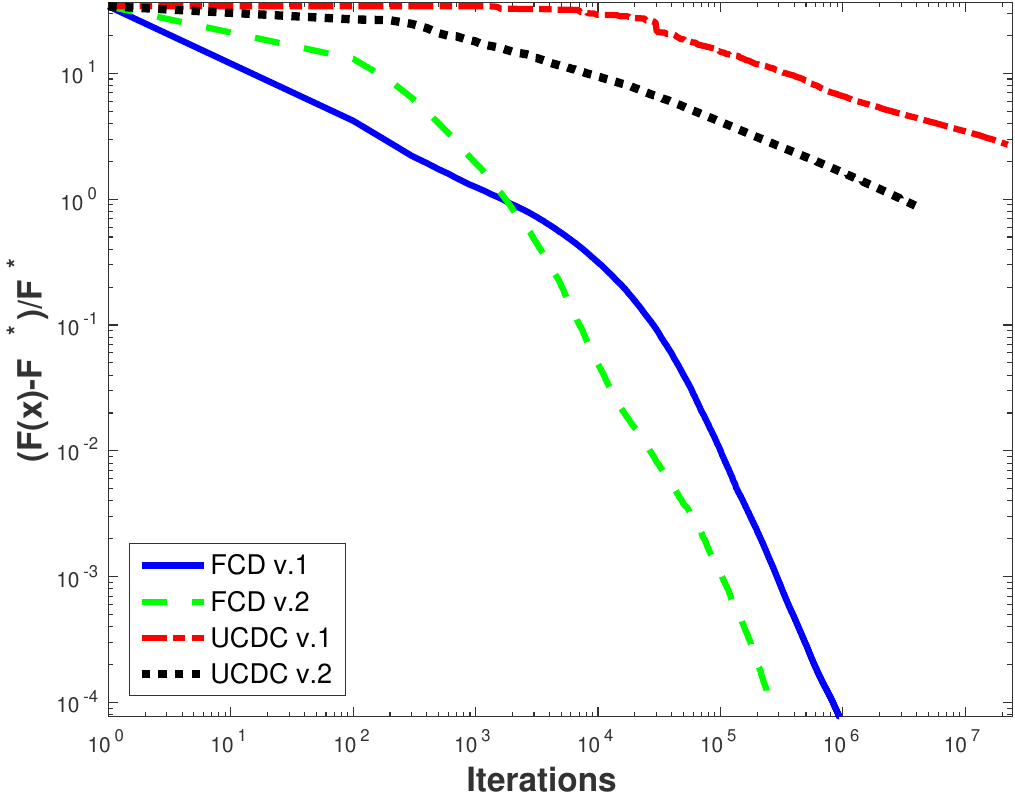}}
\subfloat[kdda,    $\frac{F(x) - F^*}{F^*}$ vs iterations]{\label{fig2c}\includegraphics[scale=0.27]{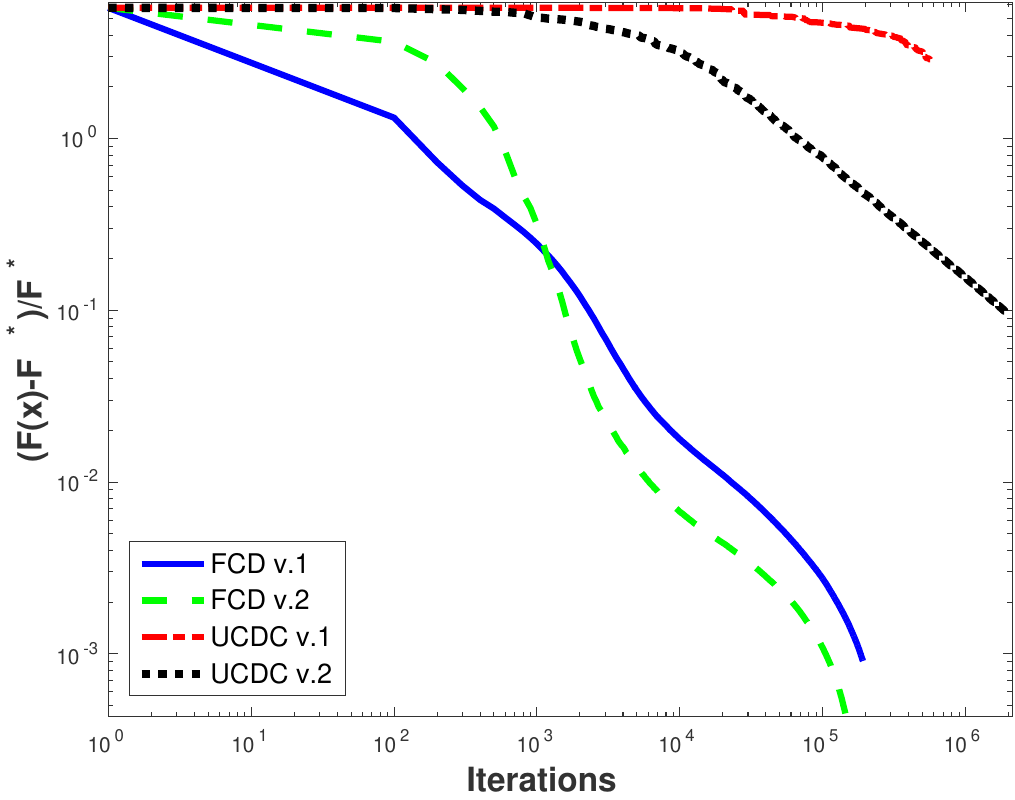}}
\subfloat[kddb,    $\frac{F(x) - F^*}{F^*}$ vs iterations]{\label{fig2a}\includegraphics[scale=0.27]{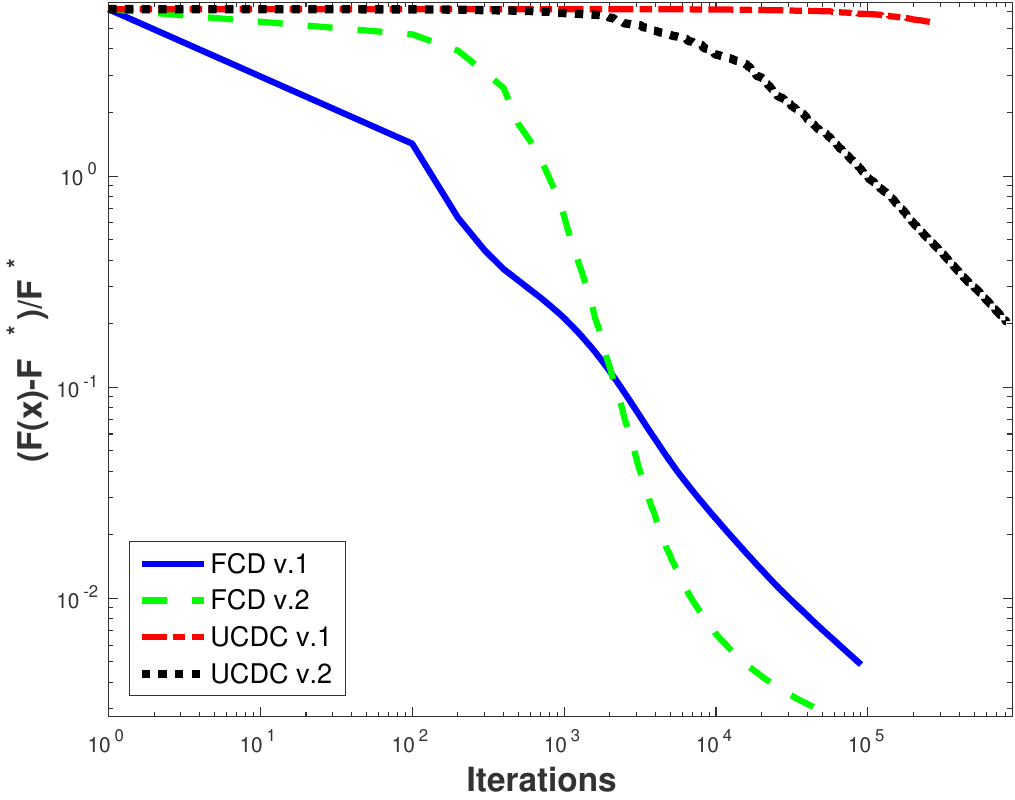}}\\
\subfloat[webspam, $\frac{F(x) - F^*}{F^*}$ vs time]{\label{fig_rw_webspam_fn_2}\includegraphics[scale=0.27]{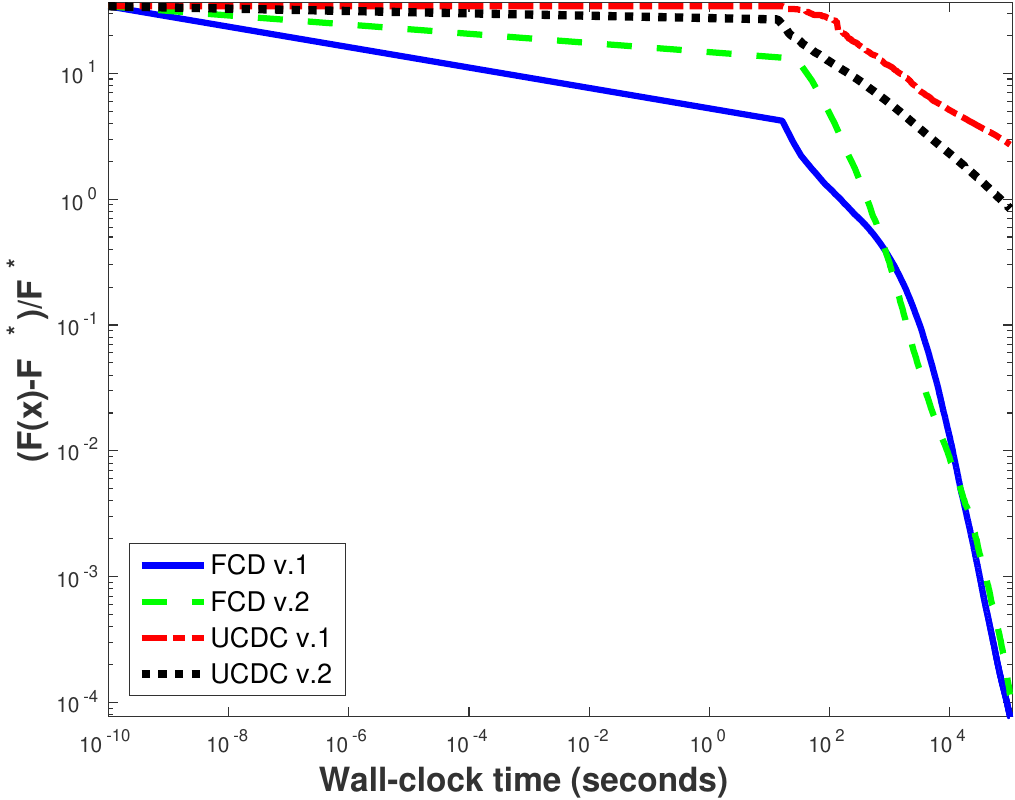}}
\subfloat[kdda,    $\frac{F(x) - F^*}{F^*}$ vs time]{\label{fig2d_2}\includegraphics[scale=0.27]{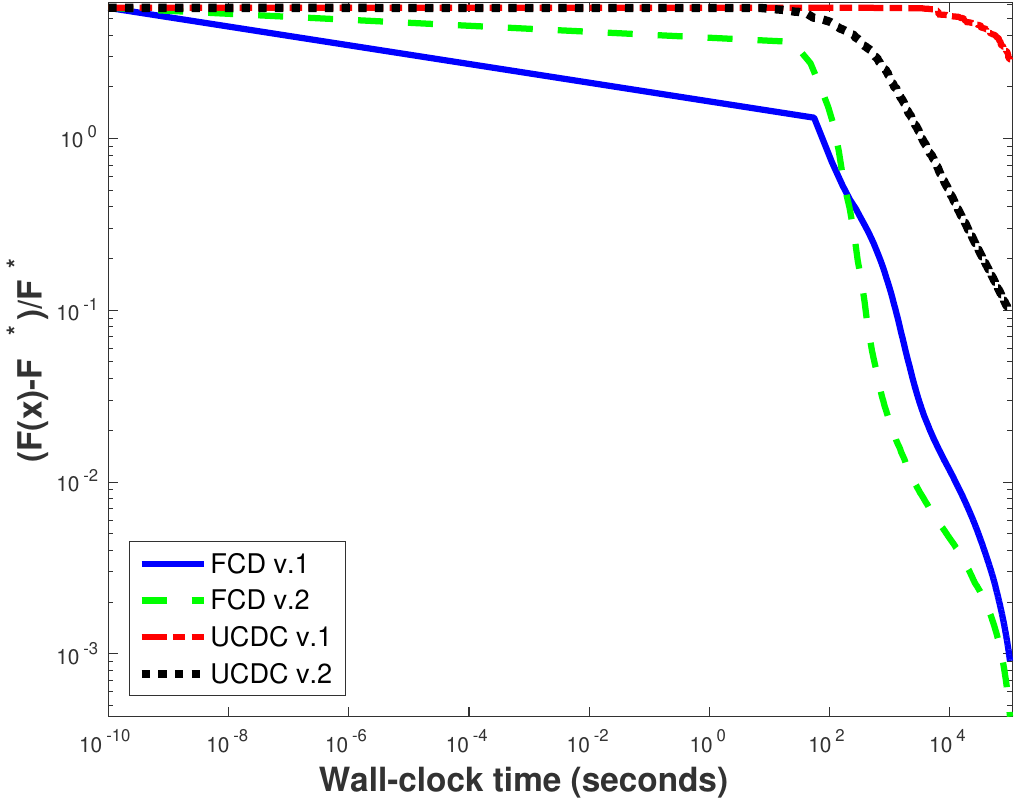}}
\subfloat[kddb,    $\frac{F(x) - F^*}{F^*}$ vs time]{\label{fig2b_2}\includegraphics[scale=0.27]{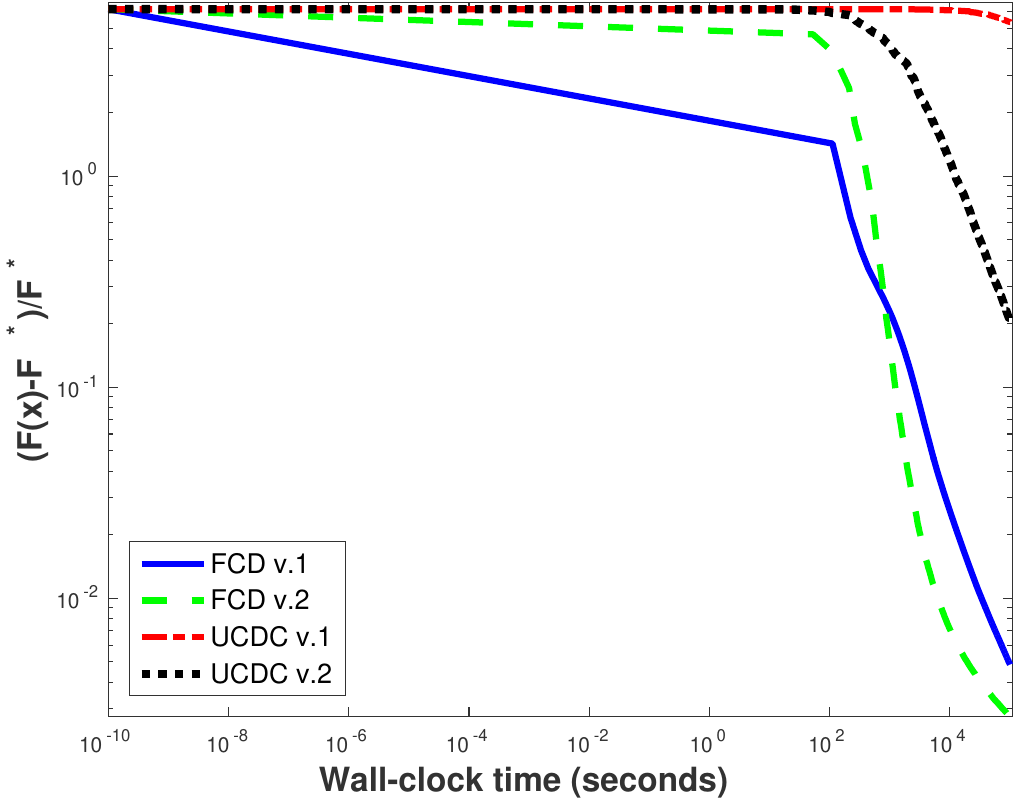}}\\
\caption{Performance of FCD and UCDC on 3 large scale $\ell_1$-regularized logistic regression problems.
The first row shows the function value $\frac{F(x) - F^*}{F^*}$ vs the number of iterations, while the second row shows the function value $\frac{F(x) - F^*}{F^*}$ vs time. Columns 1, 2 and 3 correspond to data sets webspam, `kdda' (kdd2010 (algebra)) and `kddb' (kdd2010 (bridge to algebra)), respectively. All plots are in log-scale.
For some figures, i.e., Figures \ref{fig_rw_webspam_fn_2}, \ref{fig2d_2} and \ref{fig2b_2}, we measured $\frac{F(x) - F^*}{F^*}$ every $100$ iterations of the algorithms, which explains the initial rapid decrease in $\frac{F(x) - F^*}{F^*}$.}
\label{Fig_big}
\end{figure}

FCD performs very well on all data sets. Notice that both versions of FCD achieve a lower objective function value than UCDC in the allocated time. The subfigures corresponding to function value vs iterations demonstrate that an iteration of FCD is slightly more expensive in terms of computation time than UCDC (i.e., fewer iterations of FCD are completed in the allocated time compared with UCDC). However, the expense of each iteration of FCD is offset by a greater reduction in function value. Clearly, including partial curvature information (as in FCD) is useful.

The results shown in Figure~\ref{Fig_big} are even more striking. Note that data sets kdda and kddb are very large. Despite this, FCD manages to decrease the objective function quickly. On the other hand, UCDC~v.1 barely manages to decrease the objective function at all in the allocated time, while UCDC~v.2 only does a little better. Again, this highlights the importance of partial second order information. The cost of including the curvature information is again offset by the rapid reduction in the objective function value. It is interesting to note that, on these data sets, FCD v.1 and v.2 perform similarly, which shows that even a little curvature information is beneficial. (Recall that the two variants only differ in the choice of $H_k^S$; see Table~\ref{AlgTable}.) Clearly, FCD outperforms UCDC on these problems.

Overall, FCD v.1 makes rapid progress decreasing the objective value initially, whereas FCD v.2 is better at decreasing the objective function closer to the solution. We believe that FCD v.2 is faster for more accurate solutions due to the second-order information that is capture by using the Hessian to construct the subproblem of the algorithm.

We now comment on an important observation about the running time required by the line search for FCD.
It is often believed that a line search might be a task that is too expensive for large-scale problems. Therefore, constant or diminishing step-sizes are usually preferred in order to avoid extra function evaluations, which are often required by line search techniques.
However, Table~\ref{LSsmall} shows that, for the experiments corresponding to Figures \ref{Fig_small} and \ref{Fig_big}, the additional time required to perform the function evaluations required by line search for FCD are often \emph{negligible}.
In particular, notice that columns three and five report the percentage of iterations in which a step-size of $\alpha=1$ was accepted by the line-search step in FCD. Perhaps surprisingly, for the kdd2010(algebra) dataset, FCD v.1 and v.2 \emph{accepted a step-size of $\alpha=1$ for $99.99\%$ of iterations.} At worst, FCD~v.2 applied to rcv1.binary, accepted a step size of 1 for 81.26\% of the iterations.
Moreover, the second and fourth columns of Table \ref{LSsmall} state the percentage of total time that was spent on line search computations (including all necessary function evaluations. At worst and only for one experiment, FCD~v.1 spent 55.38\% of the total time on line search computations for the rcv1.binary problem, whereas FCD~v.2  spent only 3.62\% of the total time on line search computations for the same problem. On average, over both algorithm variants, and each of the 6 problems, 9.91\% of the total running time was spent on line search computations.
We find it striking the fact that FCD~v.1, which uses only diagonal Hessian information, accepts unit step-sizes for at least $95.95\%$ of its iterations. We believe that this reveals the power of line-search and it shows that it shouldn't be discarded easily in practice.

\begin{remark}
Furthermore, the cost of the line-search in terms of worst-case function evaluations is not so high. Because we use a standard line search, it is straightforward to establish that, in the worst-case, one requires $\log(\alpha_S)$ function evaluations for the line search at each iteration, where $\alpha_S$ is defined in \eqref{Eq_alphamin}. In particular, because $\alpha$ decreases geometrically (for example, it is halved at every iteration) and $\alpha_S$ is the minimum $\alpha$ such that the termination criteria of line-search are satisfied, we simply have $(1/2)^s < \alpha_S$, which means that, in the worst case, we require $s> \log_2(\alpha_S)$ function evaluations for each line search.
\end{remark}

\begin{table}[h!]
	\centering
	\caption{Comparison of the line search costs for FCD for the experiments in Figures \ref{Fig_small} and \ref{Fig_big}. Columns 2 and 4 report the percentage of the overall running time that was spent on line search calculations (including all necessary function evaluations). Columns 3 and 5 report the percentage of iterations where a step-size equal to $1$ satisfied the line search termination conditions.}
\begin{tabular}{|l|c|c|c|c|}
\hline
\multirow{ 2}{*}{\hspace{1.2cm} Problem} & \multicolumn{2}{|c|}{FCD v.1} & \multicolumn{2}{|c|}{FCD v.2} \\
\cline{2-5}
& \% of time & \% of iterations &  \% of time & \% of iterations  \\
\hline
kdd2010 (algebra)	                 & $7.31\%$ &$99.99\%$ &$7.07\%$ &  $99.99\%$ \\
kdd2010 (bridge to algebra)	&$6.67\%$ &$99.99\%$ & $4.37\%$&$99.99\%$ \\
news20.binary	                 & $11.09\%$ &$99.97\%$ &$1.44\%$ & $95.95\%$ \\
rcv1.binary	                &$55.38\%$ &$81.26\%$ &$3.62\%$ &$99.46\%$ \\
url	                                 & $7.62\%$ & $99.92\%$ & $11.63\%$ &$99.81\%$ \\
webspam	                         & $2.23\%$ & $99.99\%$ & $0.56\%$& $99.89\%$\\
\hline
	\end{tabular}
	\label{LSsmall}
\end{table}

\subsection{Results for $\ell_2$-regularized logistic regression}

Figure~\ref{Fig_small_l2} shows the results of FCD and UCDC on several of the smaller data sets, and Figure~\ref{Fig_big_l2} shows the results of FCD and UCDC on several of the larger data sets
for $\ell_2$-regularized logistic regression. Similarly to the $\ell_1$-regularized experiments in Subsection \ref{subsec:l1regexper} FCD~v.1 or v.2 or both are faster in terms of running time compared to UCDC.
However, due to strong convexity imposed by $\ell_2$-regularization we observe that FCD~v.2 is clearly better than FCD~v.1, even for highly accurate solutions, despite that FCD~v.2 uses only diagonal Hessian information.
Strong convexity is also the reason that the performance gap between UCDC and FCD decreased for $\ell_2$-regularized logistic regression.
\begin{figure}[h!]\centering
\subfloat[news20.b, $\frac{F(x) - F^*}{F^*}$ vs iterations]{\label{fig_rw_webspam_its_l2}\includegraphics[scale=0.27]{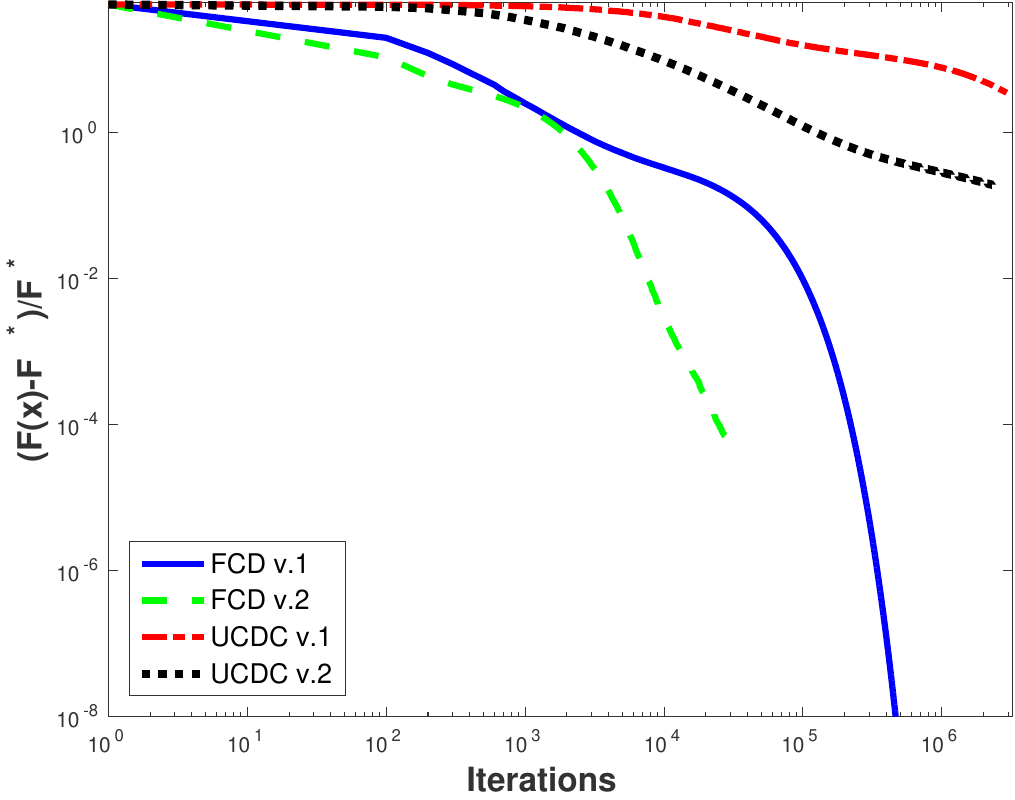}}
\subfloat[rcv1,          $\frac{F(x) - F^*}{F^*}$ vs iterations]{\label{fig2c_l2}\includegraphics[scale=0.27]{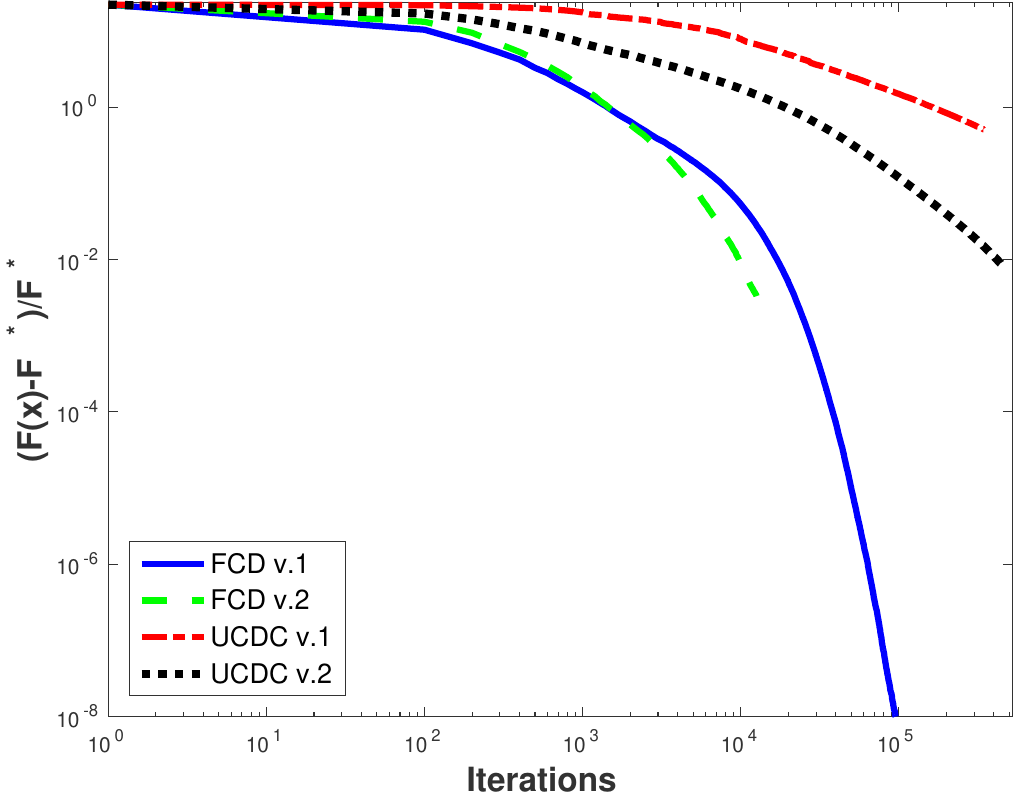}}
\subfloat[url,            $\frac{F(x) - F^*}{F^*}$ vs iterations]{\label{fig2a_l2}\includegraphics[scale=0.27]{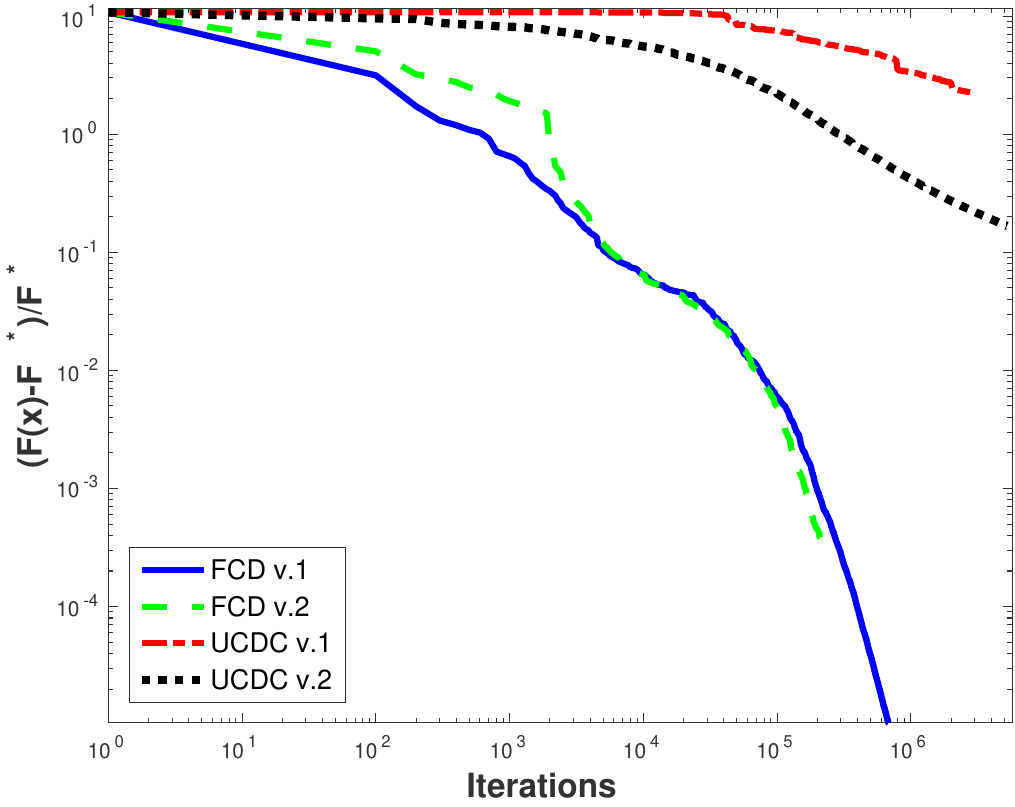}}\\
\subfloat[news20.b, $\frac{F(x) - F^*}{F^*}$ vs time]{\label{fig_rw_webspam_fn_l2}\includegraphics[scale=0.27]{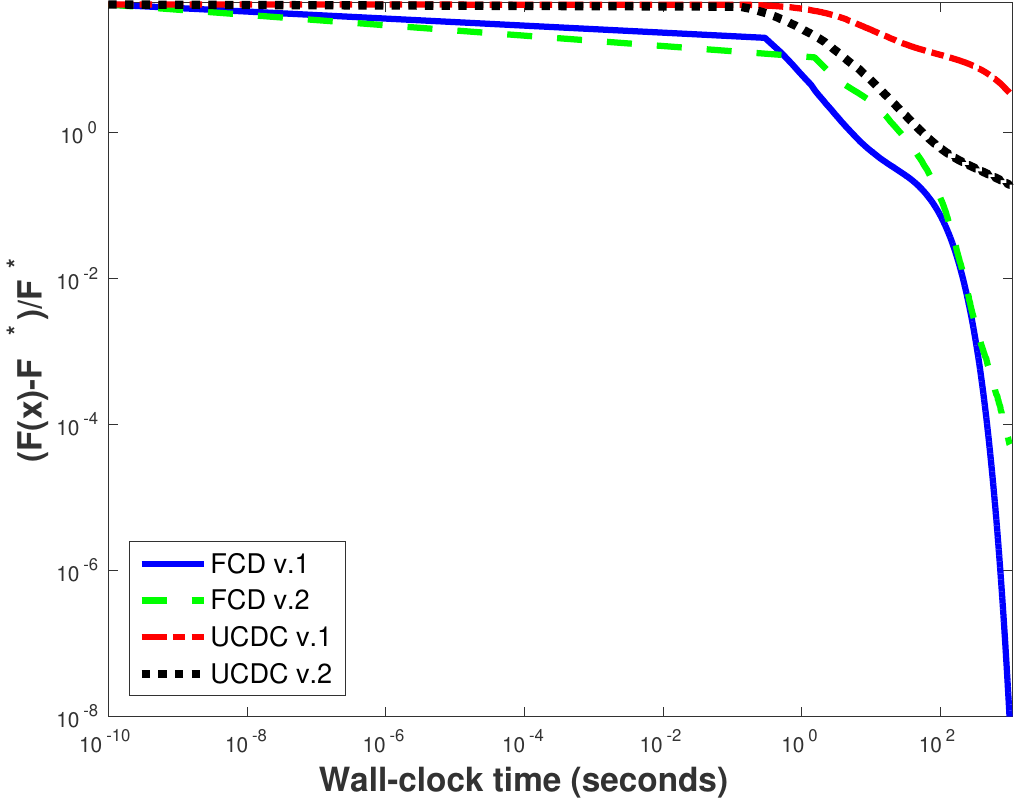}}
\subfloat[rcv1,          $\frac{F(x) - F^*}{F^*}$ vs time]{\label{fig2d_l2}\includegraphics[scale=0.27]{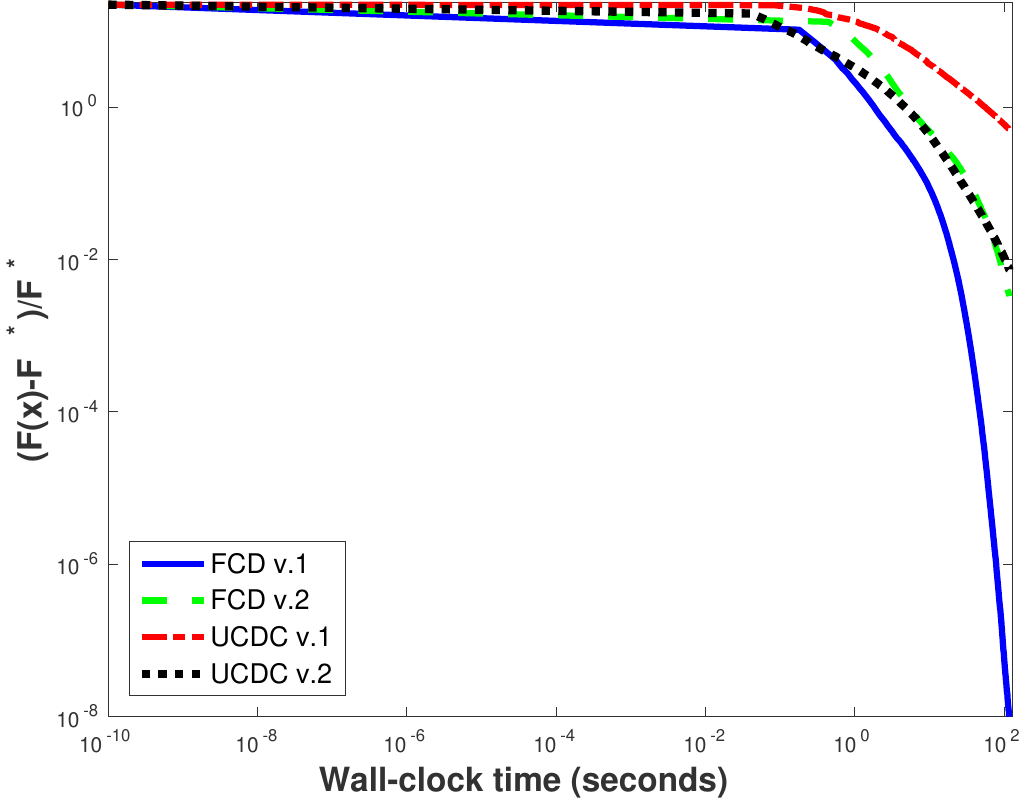}}
\subfloat[url,           $\frac{F(x) - F^*}{F^*}$ vs time]{\label{fig2b_l2}\includegraphics[scale=0.27]{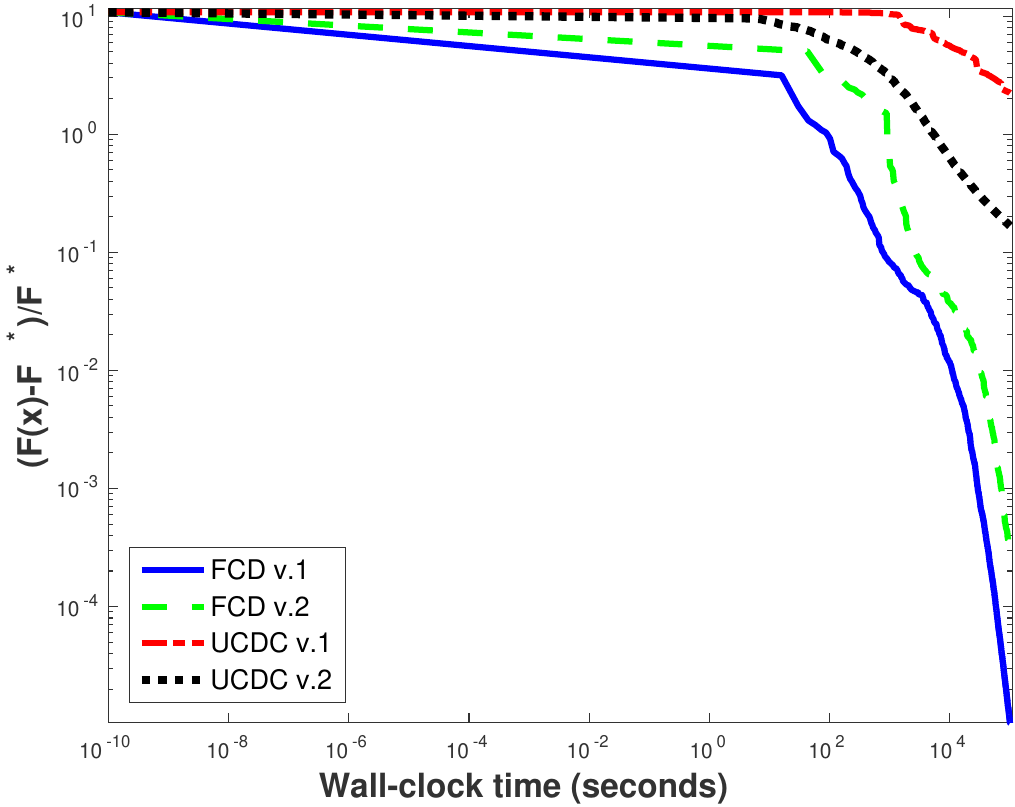}}\\
\caption{Performance of FCD and UCDC on 3 large scale $\ell_2$-regularized logistic regression problems.
The first row shows the function value $\frac{F(x) - F^*}{F^*}$ vs the number of iterations, while the second row shows the function value $\frac{F(x) - F^*}{F^*}$ vs time. Columns 1, 2 and 3 correspond to data sets news20.binary, rcv1 and url, respectively. All plots are in log-scale.
For some figures, i.e., Figures \ref{fig_rw_webspam_fn_l2}, \ref{fig2d_l2} and \ref{fig2b_l2}, we measured $\frac{F(x) - F^*}{F^*}$ every $100$ iterations of the algorithms, which explains the initial rapid decrease in $\frac{F(x) - F^*}{F^*}$.}
\label{Fig_small_l2}
\end{figure}
\begin{figure}[h!]
\centering
\subfloat[webspam, $\frac{F(x) - F^*}{F^*}$ vs iterations]{\label{fig_rw_webspam_its_l2}\includegraphics[scale=0.27]{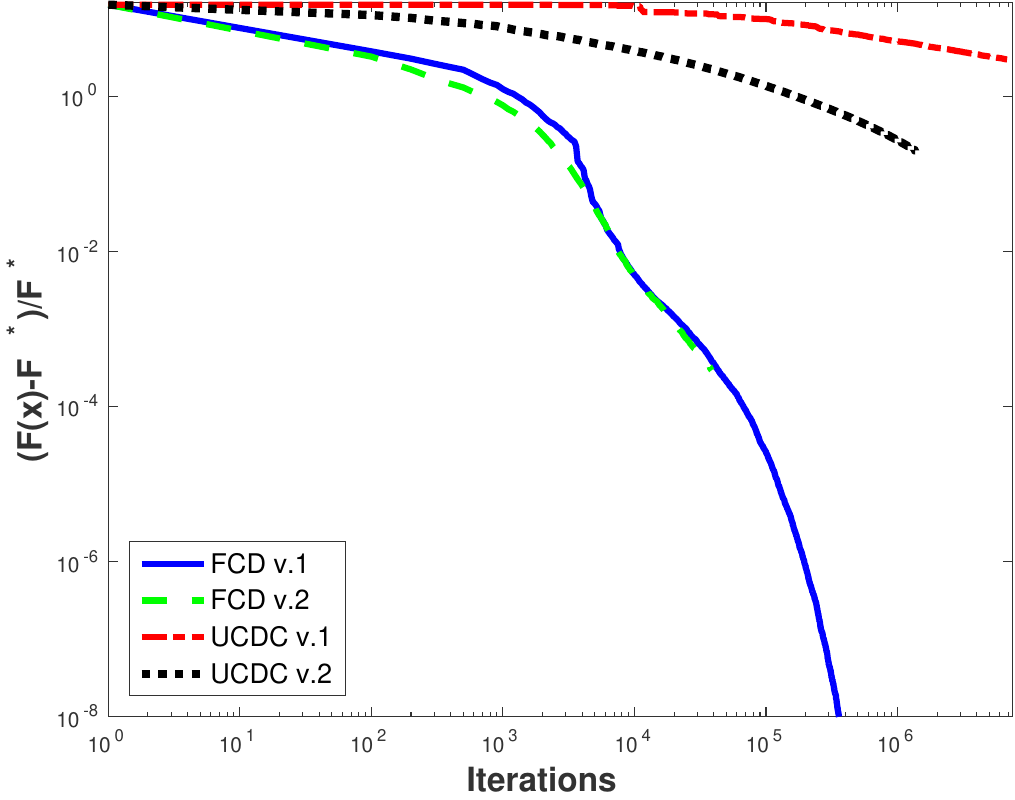}}
\subfloat[kdda,    $\frac{F(x) - F^*}{F^*}$ vs iterations]{\label{fig2c_l2}\includegraphics[scale=0.27]{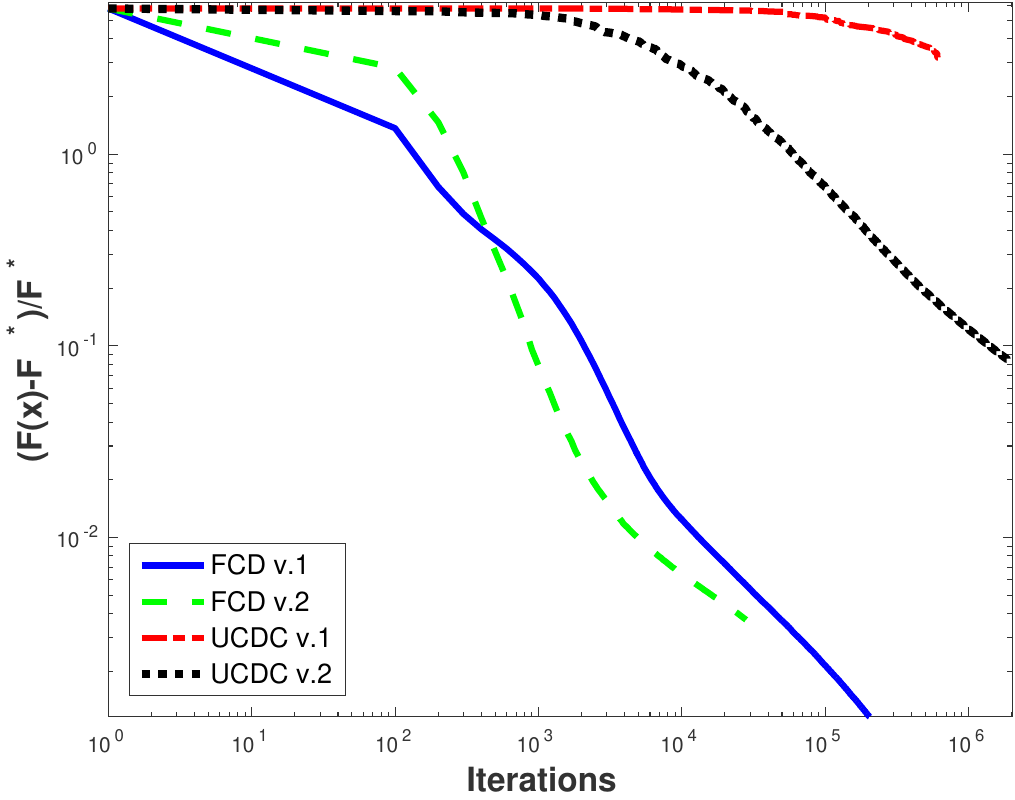}}
\subfloat[kddb,    $\frac{F(x) - F^*}{F^*}$ vs iterations]{\label{fig2a_l2}\includegraphics[scale=0.27]{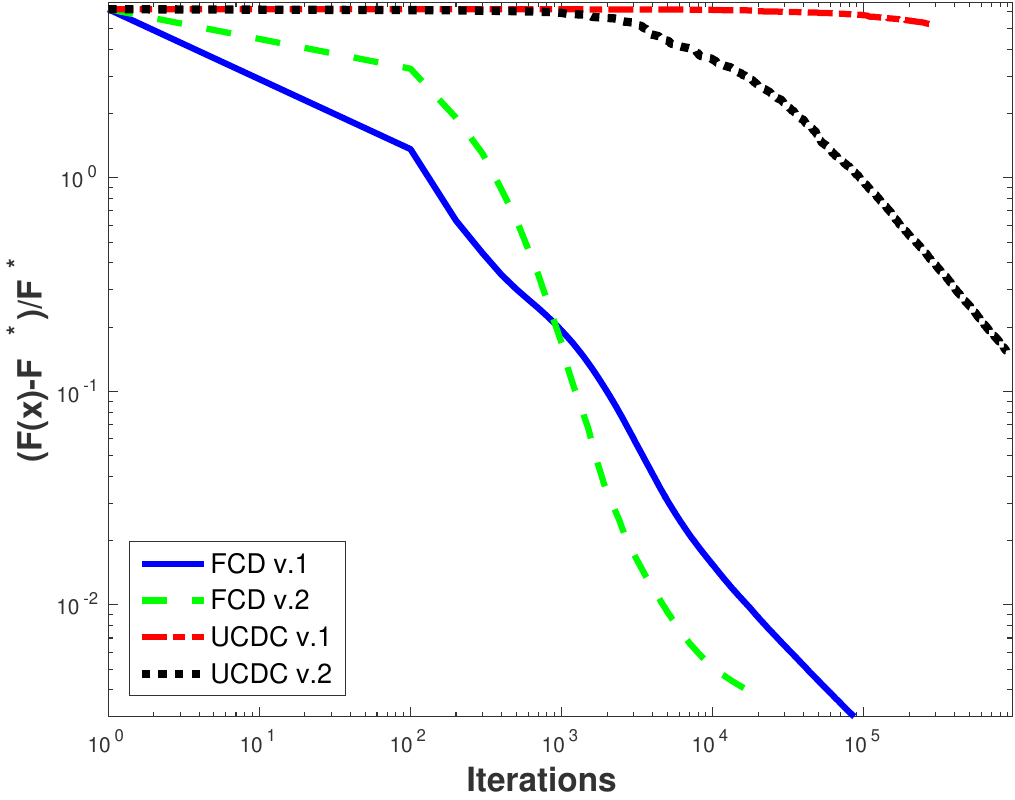}}\\
\subfloat[webspam, $\frac{F(x) - F^*}{F^*}$ vs time]{\label{fig_rw_webspam_fn_2_l2}\includegraphics[scale=0.27]{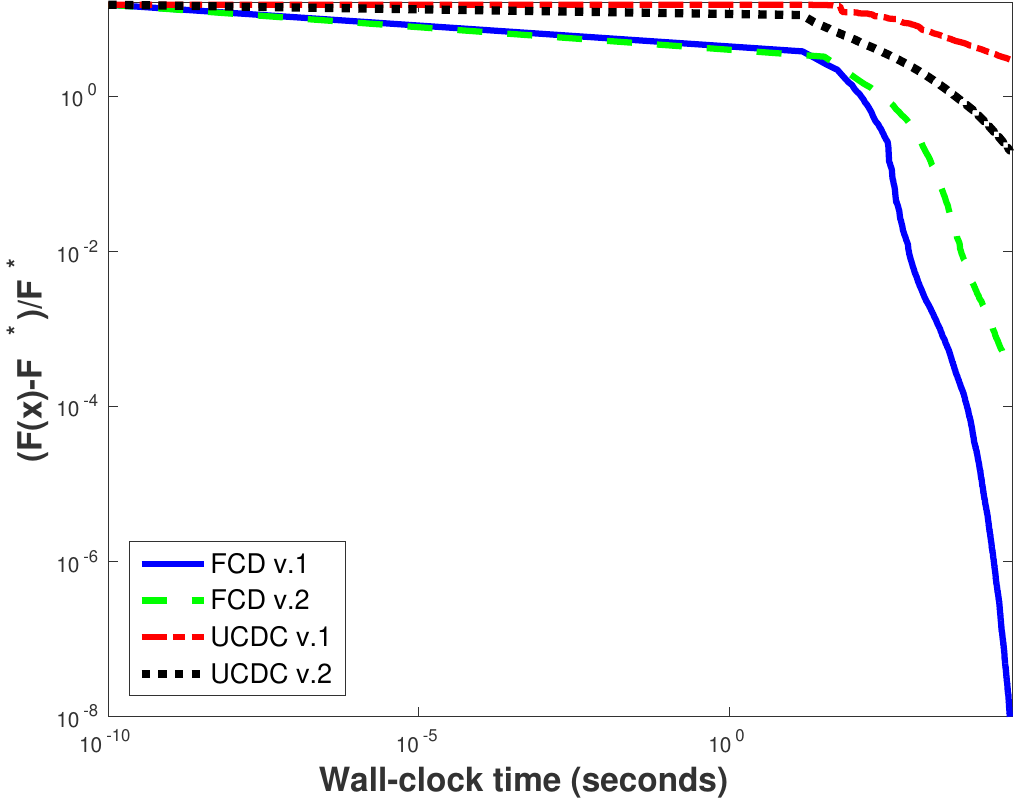}}
\subfloat[kdda,    $\frac{F(x) - F^*}{F^*}$ vs time]{\label{fig2d_2_l2}\includegraphics[scale=0.27]{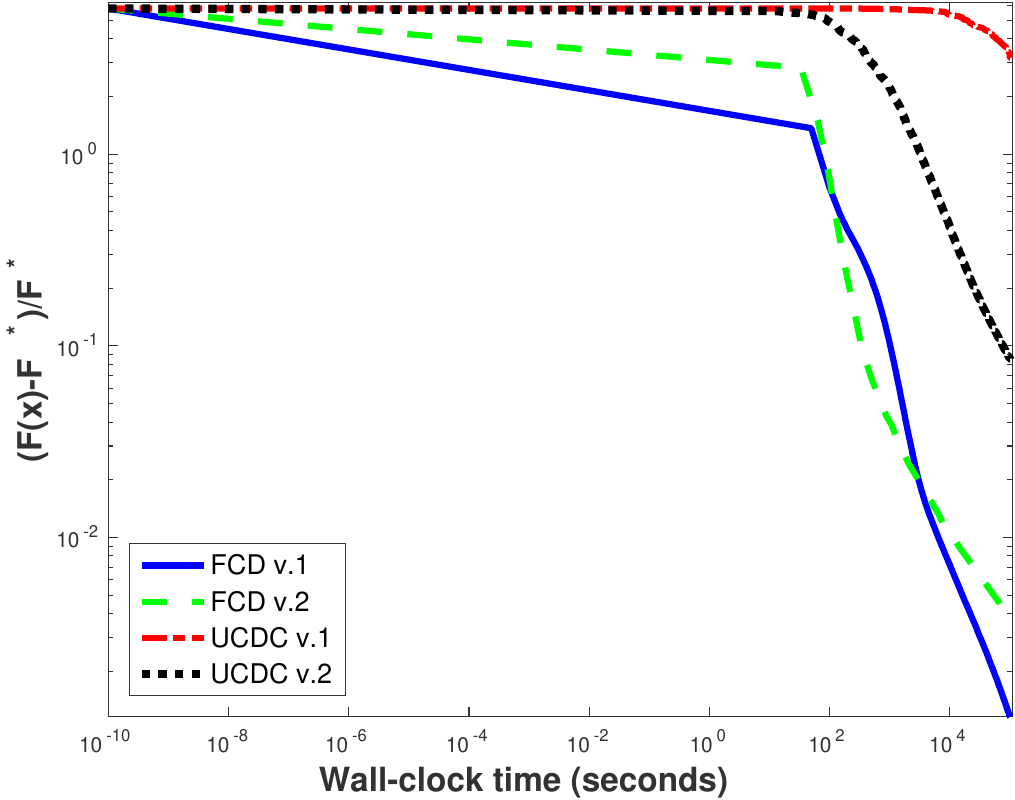}}
\subfloat[kddb,    $\frac{F(x) - F^*}{F^*}$ vs time]{\label{fig2b_2_l2}\includegraphics[scale=0.27]{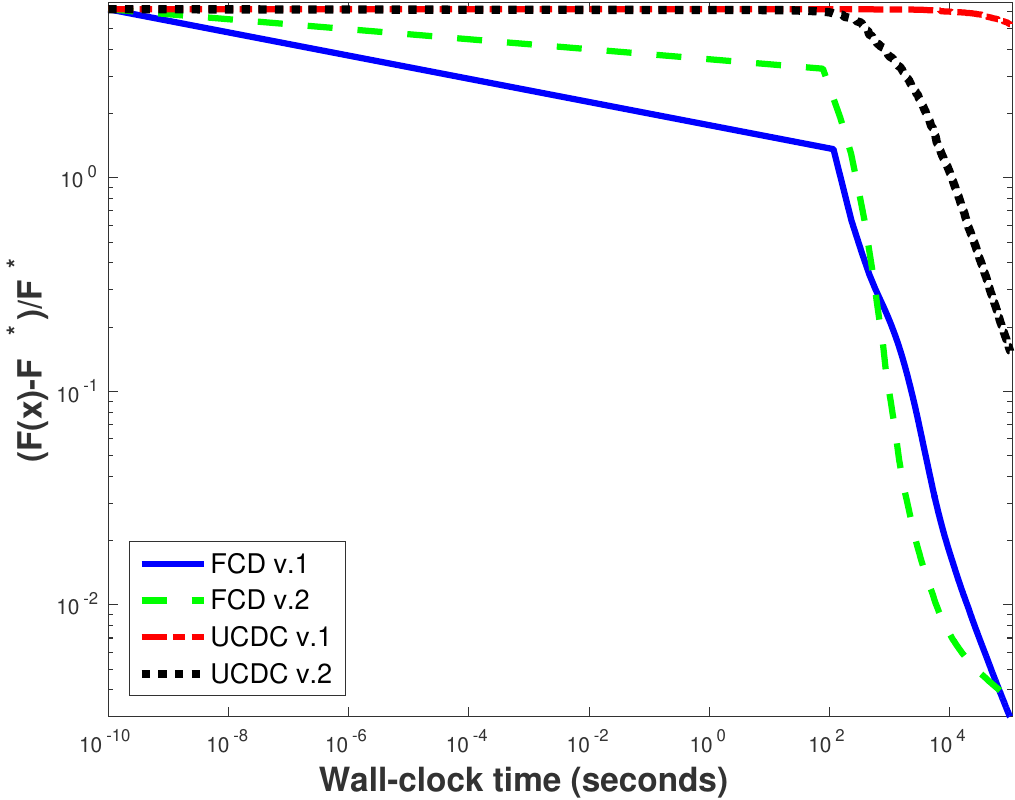}}\\
\caption{Performance of FCD and UCDC on 3 large scale $\ell_1$-regularized logistic regression problems.
The first row shows the function value $\frac{F(x) - F^*}{F^*}$ vs the number of iterations, while the second row shows the function value $\frac{F(x) - F^*}{F^*}$ vs time. Columns 1, 2 and 3 correspond to data sets webspam, `kdda' (kdd2010 (algebra)) and `kddb' (kdd2010 (bridge to algebra)), respectively. All plots are in log-scale.
For some figures, i.e., Figures \ref{fig_rw_webspam_fn_2_l2}, \ref{fig2d_2_l2} and \ref{fig2b_2_l2}, we measured $\frac{F(x) - F^*}{F^*}$ every $100$ iterations of the algorithms, which explains the initial rapid decrease in $\frac{F(x) - F^*}{F^*}$.}
\label{Fig_big_l2}
\end{figure}

Finally, Table \ref{LSsmall_l2} shows performance statistics for line-search in FCD for $\ell_2$-regularized logistic regression. Note that the performance of line-search is even better for $\ell_2$-regularized logistic regression compared to $\ell_1$-regularized logistic regression.
We believe that the reason is strong convexity that is guaranteed by $\ell_2$-regularization.

\begin{table}[h!]
	\centering
	\caption{Comparison of the line search costs for FCD for the experiments in Figures \ref{Fig_small_l2} and \ref{Fig_big_l2}. Columns 2 and 4 report the percentage of the overall running time that was spent on line search calculations (including all necessary function evaluations). Columns 3 and 5 report the percentage of iterations where a step-size equal to $1$ satisfied the line search termination conditions.}
\begin{tabular}{|l|c|c|c|c|}
\hline
\multirow{ 2}{*}{\hspace{1.2cm} Problem} & \multicolumn{2}{|c|}{FCD v.1} & \multicolumn{2}{|c|}{FCD v.2} \\
\cline{2-5}
& \% of time & \% of iterations &  \% of time & \% of iterations  \\
\hline
kdd2010 (algebra)	                 & $7.32\%$ &$99.99\%$ &$0.96\%$ &  $99.99\%$ \\
kdd2010 (bridge to algebra)	&$6.77\%$ &$99.99\%$ & $1.73\%$&$99.99\%$ \\
news20.binary	                 & $11.92\%$ &$99.99\%$ &$0.74\%$ & $99.98\%$ \\
rcv1.binary	                &$16.93\%$ &$99.99\%$ &$2.58\%$ &$99.90\%$ \\
url	                                 & $8.55\%$ & $99.99\%$ & $3.31\%$ &$99.99\%$ \\
webspam	                         & $3.05\%$ & $99.99\%$ & $0.36\%$& $99.98\%$\\
\hline
	\end{tabular}
	\label{LSsmall_l2}
\end{table}

\section{Conclusion}
In this work we have presented a randomized block coordinate descent method (FCD) that can be applied to convex composite functions of the form \eqref{Def_F}.
The proposed method allows the coordinates to be selected randomly via a non-fixed block structure, incorporates partial curvature information via a user defined matrix $H_k^S$, and allows inexact updates to be used. These features make the algorithm extremely flexible. The algorithm is supported by high probability iteration complexity results. Moreover, we presented several synthetic and real world, large scale problems where FCD is shown to outperform the current state-of-the-art coordinate descent method.

\section{Acknowledgments}
We would like to thank the anonymous reviewers for their helpful comments and suggestions, which led to improvements of an earlier version of this paper. 

\bibliographystyle{plain}
\begin{small}
  \bibliography{references}
\end{small}

\end{document}